\newtheorem{thm}{Theorem}[section]
\newtheorem{cor}[thm]{Corollary}
\newtheorem{lem}[thm]{Lemma}
\newtheorem{prop}[thm]{Proposition}
\newtheorem{soulthm}[thm]{Soul Theorem}
\newtheorem{isotropylem}[thm]{Isotropy Lemma}
\newtheorem{splittingthm}[thm]{Splitting Theorem}
\newtheorem{doublesoulthm}[thm]{Double Soul Theorem}
\newtheorem*{theorem_A}{Theorem A}
\newtheorem*{theorem_B}{Theorem B}
\newtheorem*{theorem_C}{Theorem C}
\theoremstyle{definition}
\newtheorem{example}[thm]{Example}
\newtheorem{rem}[thm]{Remark}
\newtheorem{case}{Case}[section]
\theoremstyle{remark}
\newtheorem{question}[thm]{Question}
\newtheorem*{ack}{Acknowledgments}
\numberwithin{equation}{section}
\newcommand{\RR}{\mathbb{R}}
\newcommand{\CC}{\mathbb{C}}
\newcommand{\KK}{\mathbb{K}}
\newcommand{\klein}{\mathbb{K}}
\newcommand{\MM}{\mathbb{M}}
\newcommand{\RP}{\mathbb{RP}}
\newcommand{\CP}{\mathbb{CP}}
\newcommand{\curv}{\mathrm{curv}}
\newcommand{\torus}{\mathrm{\mathbb{T}}}
\newcommand{\sphere}{\mathrm{\mathbb{S}}}
\newcommand{\cldisc}{\mathrm{\mathbb{D}}}
\newcommand{\Int}{\mathrm{\mathbb{Z}}}
\newcommand{\SO}{\mathsf{SO}}
\newcommand{\SU}{\mathsf{SU}}
\newcommand{\Sp}{\mathsf{Sp}}
\newcommand{\Hh}{\mathsf{H}}
\newcommand{\G}{\mathsf{G}}
\newcommand{\Id}{\mathsf{1}}
\newcommand{\Ss}{\mathsf{S}}
\newcommand{\Fix}{\mathrm{Fix}}
\begin{document}

%-------------------------------------------
%        Title
%-------------------------------------------

\title[ Fixed point homogeneous manifolds in low dimensions]{Nonnegatively curved fixed point homogeneous manifolds in low dimensions
	%\thanks{}
	}

%\subtitle{Do you have a subtitle?\\ If so, write it here}

%\titlerunning{Short form of title}        % if too long for running head

\author{Fernando Galaz-Garcia}

%\authorrunning{Short form of author list} % if too long for running head

\address{ Department of Mathematics, University of Maryland at College Park,
College Park, Maryland, U.S.A.}
\curraddr{Mathematisches Institut, WWU M\"unster, Germany\\
}
\email{f.galaz-garcia@uni-muenster.de}

%\date{Received: date / Accepted: date}
% The correct dates will be entered by the editor

%-------------------------------------------
%        Abstract
%-------------------------------------------

\begin{abstract}
Let $\G$ be a compact Lie group acting isometrically on a compact Riemannian manifold $M$ with nonempty fixed point set 
$M^\G$. We say that $M$ is \emph{fixed-point homogeneous} if $\G$ acts transitively on a normal sphere to some component of $M^\G$. 
Fixed-point homogeneous manifolds with positive sectional curvature have been completely classified. We classify nonnegatively curved fixed-point homogeneous Riemannian manifolds in dimensions $3$ and $4$ and determine which nonnegatively curved simply-connected $4$-manifolds admit a smooth fixed-point homogeneous circle action with a given orbit space structure. 
\end{abstract}

\keywords{Fixed, Point, Homogeneous, Nonnegative, Curvature}
\subjclass[2000]{53C20, 57S25, 51M25}

\maketitle

%%%					%%%
%%%	MAIN MATTER		%%%
%%%					%%%

%%%						%%%
%%%	1. INTRODUCTION		%%%
%%%						%%%

\section{Introduction}

The study of Riemannian manifolds with nonnegative (sectional) curvature has remained an area of active research in which metric aspects of differential geometry, such as comparison arguments, play a central role (cf. \cite{W,Zi}). Despite the existence of general structure results (e.g., Cheeger-Gromoll \cite{ChGr}) and of obstructions to nonnegative curvature (e.g., Gromov's Betti number theorem \cite{Gr_Betti}), examples of nonnegatively curved manifolds and techniques for their construction are scarce. Thus, finding new examples in this class remains a central problem in the field. In this context, considering manifolds with a ``large''  isometric group action provides  a systematic approach to the study of both positively and nonnegatively curved manifolds (see, e.g., \cite{G}), revealing the structure of these spaces and providing insight into methods for constructing new examples (cf. \cite{GZ}). What we mean by ``large'' is open for interpretation. In this work we will interpret ``large'' as having low \emph{fixed-point cohomogeneity}, which we presently define.

Let $M$ be a compact Riemannian manifold and $\G$ its isometry group, which is a compact Lie group. Observe that $\G$ acts on $M$ by isometries; we will assume that this action is effective. Suppose that $\G$ acts on $M$ with nonempty fixed-point set $M^{\G}$. We define the \emph{fixed-point cohomogeneity} of $M$ as $\dim M/\G - \dim M^{\G}-1 \geq 0$.  
We say  that the action is \emph{fixed-point homogeneous} if the fixed-point cohomogeneity is $0$, i.e., if $M^{\G}$ has codimension $1$ 
in the orbit space $M/\G$. Fixed point homogeneous connected positively curved manifolds were classified by Grove and Searle \cite{GS}. This classification has been proven a strong tool in other classification work on positively curved manifolds with symmetries, e.g, \cite{Wi_sym} and the classification of simply-connected positively curved \emph{cohomogeneity} 1 manifolds \cite{GWZ,Ve} (i.e., positively curved manifolds with an isometric Lie group action whose orbit space is 1-dimensional).

In this work we investigate fixed-point homogeneous Riemannian manifolds with nonnegative curvature. In addition to the intrinsic interest these manifolds have as an extension of the class of positively curved fixed-point homogeneous manifolds studied in \cite{GS}, the classification of these manifolds would likely provide a useful tool in further research, as has been the case for positive curvature.

The presence of an isometric Lie group action provides a link between Riemannian geometry, transformation groups and Alexandrov geometry. In particular,  a fixed-point homogeneous action on a nonnegatively curved manifold $M$ yields information on the structure of $M$. More precisely, if $F$ is a fixed-point set component with maximal dimension, $M$ can be written as the union of $D(F)$, a tubular neighborhood of $F$, and $D(B)$, a neighborhood of a subspace $B\subset M$ determined by the geometry of the action (cf. Section~\ref{S2}). Thus understanding the pieces $D(F)$ and $D(B)$ is a first step in understanding the structure of nonnegatively curved manifolds with a fixed-point homogeneous action. We have focused our attention on dimensions 3 and 4, in which one is able to obtain detailed information on the manifolds and the actions by combining the geometry of the action and the classification results of Orlik and Raymond \cite{ORa,Ra} in dimension 3, and of Fintushel \cite{F1}, in dimension 4. The classification of fixed-point homogeneous $2$-manifolds follows from the classification of fixed-point homogeneous manifolds of cohomogeneity one (cf. Section~\ref{S2}). The only fixed-point homogeneous $2$-manifolds, regardless of curvature assumptions,  are $\sphere^2$ and $\RP^2$. In dimensions 3 and 4 our main results are the following. 

\begin{theorem_A}
Let $M^3$ be a $3$-dimensional nonnegatively curved fixed-point homogeneous Riemannian $\G$-manifold. Then $\G$ can be assumed to be $\SO(3)$ or $\Ss^1$ and $\mathrm{codim}\, M^{\G}=3$ or $2$, respectively. 
\begin{itemize}
	\item[(1)]If 	$\G=\SO(3)$, then $M^3$ is equivariantly diffeomorphic to $\sphere^3$ or $\RP^3$.
	\\
	\item[(2)] If $\G=\Ss^1$, then $M^3$ is equivariantly diffeomorphic to $\sphere^3$, a lens space $L^3$, $\sphere^2\times\sphere^1$, $\RP^2\times\sphere^1$, $\RP^3\#\RP^3$ or the non-trivial bundle $\sphere^2\tilde{\times}\sphere^1$.
\end{itemize}	
\end{theorem_A}

\begin{theorem_B}
Let $M^4$ be a $4$-dimensional nonnegatively curved fixed-point homogeneous $\mathsf{G}$-manifold.  Then $\G$ can be assumed to be $\SO(4)$, $\SU(2)$, $\SO(3)$ or $\Ss^1$.
\begin{itemize}
	\item[(1)] If $\mathsf{G}=\mathsf{SO}(4)$, then $M^4$ is equivariantly diffeomorphic to $\sphere^4$ or $\RP^4$.
	\\
	\item[(2)] If $\mathsf{G}=\mathsf{SU}(2)$, then $M^4$ is equivariantly diffeomorphic to $\sphere^4$, $\RP^4$ or $\mathbb{CP}^2$.
	\\
	\item[(3)] If $\mathsf{G}=\mathsf{SO}(3)$, then $M^4$ is diffeomorphic to a quotient of $\sphere^4$ or $\sphere^3\times\sphere^1$.
	\\
	\item[(4)] If $\mathsf{G}=\mathsf{S}^1$, then $M^4$ is diffeomorphic to a quotient of $\sphere^4$, $\CP^2$, $\sphere^2\times\sphere^2$, $\CP^2\#\pm\CP^2$, $\sphere^3\times\RR$ or $\sphere^2\times\RR^2$. 
\end{itemize}
\end{theorem_B}

Theorems A and B are  proved in Sections 3  and 4. To do this, we completely determine the possible orbit spaces of a fixed-point homogeneous action on a nonnegatively curved $3$- or $4$-manifold. We have provided examples of isometric actions realizing some of the possible orbit space configurations that occur in the proofs. Section~\ref{S2} contains preliminary definitions and results that will be used in subsequent sections. We remark that all of the manifolds in Theorems A and B are known to carry metrics of nonnegative curvature. However, not every $3$-manifold with nonnegative curvature appears in our list, e.g. the Poincar\'e homology sphere, which can be viewed as the quotient space $\SO(3)/\mathsf{I}$, where $\mathsf{I}$ is the icosahedral group. We also point out that, as a consequence of our work, every fixed-point homogeneous nonnegatively curved manifold of dimension $3$ or $4$ decomposes as the union of two disk bundles.

In Section 5 we further study fixed-point homogeneous circle actions on nonnegatively curved simply-connected $4$-manifolds. To put our results in context, let us recall first that, as a consequence of the work of Kleiner \cite{K} and Searle and Yang \cite{SY}, in combination with Fintushel's classification of circle actions on simply-connected 4-manifolds  \cite{F1}  and Perelman's proof of the Poincar\'e 
conjecture, a simply-connected nonnegatively curved $4$-manifold with an isometric circle action is diffeomorphic to $\sphere^4$, $\CP^2$, $\sphere^2\times\sphere^2$ or $\CP^2\#\pm\CP^2$. Let $\chi(M)$ be the Euler characteristic of a manifold $M$. By a well-known theorem of Kobayashi, if $\Ss^1$ acts effectively on $M$, $\chi(M)=\chi(\Fix(M,\Ss^1))$. Thus,  for a simply-connected nonnegatively curved $4$-manifold  $M$ with an isometric 
$\Ss^1$-action, we have $2\leq \chi(M)\leq 4$ and the fixed-point set components are $2$-spheres and isolated fixed-points. Therefore, the only possible fixed-point sets coming from a fixed-point homogeneous circle action on 
$\sphere^4$, $\CP^2$, $\sphere^2\times\sphere^2$ or $\CP^2\#\pm\CP^2$ are
 \[
\Fix(M,\Ss^1)=
	\begin{cases}
		 \sphere^2 				&\text{if $M$ is $\sphere^4$.}\\
		 \sphere^2\cup\{p\} 		&\text{if $M$ is $\CP^2$.}\\
		 \sphere^2\cup\sphere^2 	&\text{if $M$ is $\sphere^2\times\sphere^2$ or $\CP^2\#\pm\CP^2$.}\\
		 \sphere^2\cup\{p',p''\}	&\text{if $M$ is $\sphere^2\times\sphere^2$ or $\CP^2\#\pm\CP^2$.}
	\end{cases} 
\]
Both $\sphere^4$ and $\CP^2$ have metrics of positive curvature with an isometric fixed-point homogeneous circle action, i.e., the fixed-point set of the action is the one in the list above. On the other hand, when $M$ is $\sphere^2\times\sphere^2$ or $\CP^2\#\pm\CP^2$, it is not known if $M$ has a nonnegatively curved Riemannian metric with a fixed point homogeneous circle action realizing each one of the corresponding fixed-point sets listed above. Motivated by this question, in Section~\ref{Section:d4_orbit_spaces} we study smooth fixed-point homogeneous circle actions on $\sphere^4$, $\CP^2$, $
 \sphere^2\times\sphere^2$ or $\CP^2\#\pm\CP^2$.  
 We have summarized our results in the following theorem. We call an $\Ss^1$-action \emph{extendable} if it extends to a $\mathsf{T}^2$-action.

 \begin{theorem_C}
Let $M$ be a simply-connected smooth $4$-manifold with a smooth $\Ss^1$-action. 
	\begin{itemize}
		\item[(1)] If $\mathrm{Fix}(M,\Ss^1)=\sphere^2$, then $M$ is equivariantly diffeomorphic to $\sphere^4$ with a linear action.
		\\
		\item[(2)] If $\mathrm{Fix}(M,\Ss^1)=\sphere^2\cup\{p\}$, then $M$ is equivariantly diffeomorphic to $\pm\CP^2$ with a linear action.
		\\
		\item[(3)] If $\mathrm{Fix}(M,\Ss^1)=\sphere^2\cup\sphere^2$, then $M$ is equivariantly diffeomorphic to $\CP^2\#-\CP^2$ or $\sphere^2\times\sphere^2$ with an extendable action.
		\\
		\item[(4)] If $\Fix(M,\Ss^1)=\sphere^2\cup\{p',p''\}$ and there are no orbits with finite isotropy, then $M$ is equivariantly diffeomorphic to $\CP^2\#\pm\CP^2$ with only one extendable action.
		\\
		\item[(5)] If $\Fix(M,\Ss^1)=\sphere^2\cup\{p',p''\}$ and there is only a weighted arc, then $M$ is equivariantly diffeomorphic to one of the following:
		\\
		\begin{itemize}
			\item[(a)]  $\CP^2\#\CP^2$ with only one extendable action with finite isotropy $\Int_2$.
			\\
			\item[(b)]  $\CP^2\#-\CP^2$ with only one extendable action with finite isotropy $\Int_k$, $k$ odd.
			\\
			\item[(c)]  $\sphere^2\times\sphere^2$ with only one extendable action with finite isotropy $\Int_k$, $k$ even.
		\end{itemize}
	\end{itemize}
 \end{theorem_C}

 Theorem C is an application of Fintushel's classification of circle actions on simply-connected $4$-manifolfds \cite{F1}. It follows from Fintushel's work that  a closed simply-connected smooth $4$-manifold with a smooth $\mathsf{S}^1$-action is diffeomorphic to a connected sum of copies of $\mathbb{S}^4$, $\pm\mathbb{CP}^2$  and $\mathbb{S}^2\times\mathbb{S}^2$. Moreover, the action is determined up to equivariant diffeomorphism by 
a set of orbit space data (cf. Section~\ref{Section:orbit_space}). In our case, the orbit space comes from a fixed-point homogeneous circle action on a nonnegatively curved simply-connected $4$-manifold and has a rather simple structure, which is described in detail in Section~\ref{Section:d4_orbit_spaces}. Parts (1) and (2) of Theorem C are simple corollaries of Fintushel's work. To prove parts (3) and (4) we compute the possible orbit space data and determine the intersection form of $M$ following a recipe given by Fintushel. We get our results by showing that the intersection form obtained from each possible orbit space configuration is equivalent to the intersection form of $\sphere^4$, $\CP^2$, $\sphere^2\times\sphere^2$ or $\CP^2\#\pm\CP^2$.

% ACKNOWLEDGEMENTS

\begin{ack} The results in this paper are part of the author's dissertation research. The author thanks Karsten Grove, his thesis advisor, for his support and numerous conversations discussing the results contained herein.  The author would also like to thank Ron Fintushel for his help in understanding his work on smooth circle actions on simply-connected 4-manifols \cite{F1,F2}. Finally, the author thanks the Department of Mathematics of the University of Notre Dame, where part of this work was carried out during a two-year stay.  
\end{ack}

%%%%%%%%%%%%%%%%%%%%%%%%%%%%%%%%%%%%%%%%%%%%%%%%%%%%%%%%%%%%%%%%%%%%%%%%%%%%%%%%%%%%%%%%%%%%%%%%%%%%%%%%%%%%%%%%%
%%%%%%%%%%%%%%%%%%%%%%%%%%%%%%%%%%%%%%%%%%%%%%%%%%%%%%%%%%%%%%%%%%%%%%%%%%%%%%%%%%%%%%%%%%%%%%%%%%%%%%%%%%%%%%%%%

%%%%%%%%%%%%%%%%%%%%%%%%%%%%%%%%%%%%%%%%%%%%%%%%%%%%%%%%%%%%%%%%%%%%%%%%%%%%%%%%%%%%%%%%%%%%%%%%%%%%%%%%%%%%%%%%%
%%%%%%%%%%%%%%%%%%%%%%%%%%%%%%%%%%%%%%%%%%%%%%%%%%%%%%%%%%%%%%%%%%%%%%%%%%%%%%%%%%%%%%%%%%%%%%%%%%%%%%%%%%%%%%%%%

%%%%%%%%%%%%%%%%%%%%%%%%%%%%%%%%%%%%%%%%%%%%%%%%%%%%%%%%%%%%%%%%%%%%%%%%%%%%%%%%%%%%%%%%%%%%%%%%%%%%%%%%%%%%%%%%%
%%%%%%%%%%%%%%%%%%%%%%%%%%%%%%%%%%%%%%%%%%%%%%%%%%%%%%%%%%%%%%%%%%%%%%%%%%%%%%%%%%%%%%%%%%%%%%%%%%%%%%%%%%%%%%%%%

%%%%%%%%%%%%%%%%%%%%%%%%%%%%%%%%%%%%%%%%%%%%%%%%%%%%%%%%%%%%%%%%%%%%%%%%%%%%%%%%%%%%%%%%%%%%%%%%%%%%%%%%%%%%%%%%%
%%%%%%%%%%%%%%%%%%%%%%%%%%%%%%%%%%%%%%%%%%%%%%%%%%%%%%%%%%%%%%%%%%%%%%%%%%%%%%%%%%%%%%%%%%%%%%%%%%%%%%%%%%%%%%%%%

%%%							%%%
%%%		2. PRELIMINARIES		%%%
%%%							%%%

\section{Basic setup and tools}
\label{S2}
In this section we introduce some notation and several basic tools
that we will use throughout. We will always assume that our manifolds are connected, unless noted otherwise. 

%%%---------------------------------------------------------------------
%%% 	FIXED-POINT HOMOGENEOUS MANIFOLDS
%%%---------------------------------------------------------------------

\subsection{Fixed-point homogeneous manifolds}
\label{S2:FPH_mfds}

Let $\G$ be a compact Lie group acting by isometries on a
compact Riemannian manifold $M$. We will consider the action of
$\G$ as a left action. Given $x\in M$, we denote its \emph{isotropy subgroup} by $\G_x=\{\, g\in \G:gx=x\, \}$
 and the \emph{orbit} of $x$ under the action of $\G$ by $
 \G x=\{\, gx:g\in
\G\, \}\simeq \G/\G_x.
$ We will denote the orbit space of the action by $M/\G$ or $M^*$ and, given a set $A\subset M$, we will denote its image under the projection map $\pi:M\rightarrow M^*$ by $A^*$; for example, the orbit of $x\in M$ will be $x^*$.
Unless mentioned otherwise, we will assume that $\G$ acts
\emph{effectively} on $M$, i.e., that the \emph{ineffective kernel}
$\mathsf{K}=\cap_{x\in M}\G_x$ of the action is trivial. 
Note that the isotropy group $\G_{gx} = g\G_{x}g^{-1}$ is conjugate to $\G_x$ . We say that two orbits $\G x$ and $\G y$ are of the same \emph{type} 
if $\G_x$ and $\G_y$ are conjugate subgroups in $\G$.

We will denote the \emph{fixed-point set} of an element $g\in \G$
by $
M^g=\{x\in M:gx=x\}.
$
The fixed-point set of a subgroup $\Hh\leq
\G$ is $M^\Hh=\cap_{g\in \Hh} M^g$; we will occasionally denote it also by $\mathrm{Fix}(M,\Hh)$. It is well-known that each $M^\Hh$ is
a finite disjoint union of closed totally geodesic submanifolds of
$M$ (cf. \cite{Ko}). Given $M^\Hh$, we define its \emph{dimension} by
$\dim M^\Hh =\max\{\, \dim C_i :C_i \mbox{ is a connected
component of } M^\Hh\, \}.$

% SLICE THEOREM
Recall that, by the \emph{Slice theorem}, for any $x\in M$, a sufficiently small
tubular neighborhood $D(\G x)$ of $\G x$ is equivariantly
diffeomorphic to $\G\times_{\G_{x}}D_{x}^{\perp}$. Here $D_{x}^{\perp}$ is a ball at the origin of the normal space $T^{\perp}_x$ 
to the orbit $\G x$ at $x$ and $G\times_{\G_{x}}D_{x}^{\perp}$ is the bundle with fiber $D_x^{\perp}$ associated to the principal bundle 
$\G\rightarrow \G/\G_x$.

% DEF: FIXED-POINT HOMOGENEOUS ACTION.

Suppose now that $\G$ acts on $M$ with non-empty fixed-point set $M^\G$.  We
say that the action is \emph{fixed-point homogeneous} if $M^\G$ has
codimension $1$ in $M^*$; equivalently, if $\G$ acts transitively
on the normal sphere to some component of $M^\G$. We say that $M$ is \emph{fixed-point homogeneous} if it
supports a fixed-point homogeneous action for some compact Lie group $\G$. 

% GROUPS ACTING ON SPHERES.

The fact that $\G$ must act transitively on the normal sphere to some component of $M^\G$  determines what Lie groups $\G$ can act fixed-point homogeneously. The groups $\G$ that can
act transitively on a $k$-dimensional sphere $\sphere^k$ with principal isotropy $\Hh$ have been
classified (cf. \cite{Bo1,Bo2,MS,Po}). By possibly replacing $\G$ by a subgroup, it suffices to consider the pairs $(\G,\Hh)$ in the following list. 
Following \cite{GS}, we have labeled each pair $(\G,\Hh)$ by $(a_{k+1}),\ldots,(f)$.
\smallskip
\begin{equation}
\label{L:gps_trans_spheres}
(\G,\Hh)=
\begin{cases}
    (a_{k+1})\ \ \ (\SO(k+1),\SO(k)), & k\geq 1;\\[3pt]
    (b_{m+1})\ \ \ (\SU(m+1),\SU(m)), & k=2m+1\geq 3;\\[3pt]
    (c_{m+1})\ \ \ (\mathsf{Sp}(m+1),\mathsf{Sp}(m)), & k=4m+3\geq 7;\\[3pt]
    (d)\ \ \ \ \ \ \ \,(\G_2,\SU(3)), & k=6;\\[3pt]
    (e)\ \ \ \ \ \ \ \,(\mathsf{Spin}(7),\G_2)), & k=7;\\[3pt]
    (f)\ \ \ \ \ \ \, \,(\mathsf{Spin}(9),\mathsf{Spin}(7)), & k=15.
\end{cases}
\end{equation}
\smallskip

A closed $2$-manifold with a fixed-point homogeneous action must have cohomogeneity one and must be $\sphere^2$ or $\RP^2$ (cf. Corollary~\ref{C:Dim2}). In a curvature free setting, closed $3$-manifolds with a fixed-point homogeneous $\Ss^1$-action have been classified by Raymond \cite{Ra}  (cf. Theorem~\ref{T:3D_FPH_S1_actions}). This is a particular instance of the general Orlik-Raymond-Seifert classification of $3$-manifolds with a smooth $\Ss^1$-action \cite{ORa,Ra,Se} (cf. \cite{Or72}). Fixed-point homogeneous manifolds have also been studied in a Riemannian geometric context. In particular, fixed-point homogeneous Riemannian manifolds with positive sectional curvature have been completely classified up to equivariant diffeomorphism by Grove and Searle (cf. Classification Theorem 2.8 in \cite{GS}).

%%%---------------------------------------------------------------------
%%% 	ALEXANDROV GEOMETRY
%%%---------------------------------------------------------------------

\subsection{Geometry of the orbit space} 
In this subsection we outline the geometric structure of the orbit space $M^*$ of an isometric Lie group action on a nonnegatively curved compact Riemannian manifold $M$. Such an orbit space is, in general, an Alexandrov space with nonnegative curvature. We start by recalling some basic notions from Alexandrov geometry in the context of an isometric group action (cf. \cite{G}). We will then review some fundamental results linking the geometry of the orbit space $M^*$ with the structure of $M$. 

  Recall that a finite dimensional length space $(X,\mathrm{dist})$ is an \emph{Alexandrov space} if it has curvature bounded from below $\curv \geq k$ (cf. \cite{BBI}).
 When $M$ is a complete, connected Riemannian manifold and $\G$ is a compact Lie group acting (effectively) on $M$ by isometries, the orbit space $M^*$ is equipped with the orbital distance metric induced from $M$, i.e., the distance between $p^*$ and $q^*$ in $M^*$ is the distance between the orbits $\G p$ and $\G q$ as subsets of $M$.  It is well-known that, if $M$ has sectional curvature bounded below $\sec M\geq k$, then the orbit space $M^*$ is an Alexandrov space with $\curv M^* \geq k$. 
 
The \emph{space of directions} $S_{x}X$ of a general Alexandrov space $X$ at a point $x$ is,  by deÞnition, the completion of the 
space of geodesic directions at $x$. The euclidean cone $CS_{x}=T_{x}X$ is called the \emph{tangent space} to $X$ at $x$. In the case of an orbit space $M^*=M/\G$, the space of directions $S_{p^*}M^*$ at a point $p^*\in M^*$ consists of geodesic directions and is isometric to
\[
\sphere^{\perp}_p/\G_p,
\] 
where $\sphere^{\perp}_p$ is the normal sphere to the orbit $\G p$ at $p$.

The possible isotropy groups along a minimal geodesic joining two orbits $\G p$ and $\G q$  in $M$ and, equivalently, along a minimal geodesic joining $p^*$ and $q^*$ in the orbit space $M^*$, are restricted by Kleiner's Isotropy Lemma \cite{K}:

% LEM: ISOTROPY LEMMA

\begin{isotropylem}[Kleiner]
\label{L:isotropy_lemma}

 Let $c:[0,d]\rightarrow M$ be a
minimal geodesic between the orbits $\G c(0)$ and $\G c(d)$. Then, for
any $t\in (0,d)$, $\G_{c(t)}=\G_{c}$ is a subgroup of $\G_{c(0)}$ and
of $\G_{c(1)}$.
\end{isotropylem}

We will also use the following analog of the Cheeger-Gromoll Soul Theorem \cite{ChGr} in the case of orbit spaces. A more general result for Alexandrov spaces with curvature bounded below is due to Perelman \cite{Pe}.

% THM: SOUL THEOREM

\begin{soulthm}
\label{T:soul_theorem} If $\curv M^* \geq 0$ and $\partial M^*\neq\emptyset$, then there exists a totally convex compact subset $\Sigma\subset M^*$ with $\partial \Sigma=\emptyset$, which is a strong
deformation retract of $M^*$. If $\curv M^*>0$, then $\Sigma=x^*$ is a point, and $\partial M^*$ is homeomorphic to $S_{x^*}M^*\simeq S^{\perp}_{x}/\G_x$.
\end{soulthm}

We end this subsection by recalling the following consequence of the Cheeger-Gromoll Splitting Theorem  (cf. \cite{ChGr2,ChGr}), which we will use repeatedly.

% THM: SPLITTING THEOREM

\begin{splittingthm}[Cheeger, Gromoll]
\label{T:splitting_thm} Let $M$ be a compact manifold of nonnegative Ricci curvature. Then $\pi_1(M)$ contains a finite normal subgroup $\Psi$ such that $\pi_1(M)/\Psi$ is a finite group extended by $\Int_1\oplus\cdots\oplus\Int_k$  and $\tilde{M}$, the universal covering of $M$, splits isometrically as $\overline{M}\times\RR^k$, where $\overline{M}$ is compact. 
\end{splittingthm}

%%%---------------------------------------------------------------------
%%% 	ORBIT SPACE OF A FPH ACTION
%%%---------------------------------------------------------------------

\subsection{The orbit space of a fixed-point homogeneous action} 

Recall that the orbit space $M^*$ of a compact nonnegatively curved Riemannian manifold $M$ is a nonnegatively curved Alexandrov space. Moreover, if $M$ is fixed-point homogeneous,  $\partial M^*$ contains a component $F$ of $M^\G$ with maximal dimension. We now carry out the soul construction on $M^*$ and  let $C\subset M^*$ be the set at maximal distance from $F\subset \partial M^*$. Let $B=\pi^{-1}(C)\subset M$ be the preimage of $C$ under the projection map $\pi:M\rightarrow M^*$ . It follows from the Soul Theorem~\ref{T:soul_theorem} that $M$ can be exhibited as the union $M=D(F)\cup_E D(B)$ of neighborhoods $D(F)$ and $D(B)$ along their common boundary $E$. Hence, in the presence of an isometric fixed-point homogeneous $\G$-action, the structure of $M$ is fundamentally linked to $F$ and $B$ and a thorough understanding of the latter yields information on the structure of $M$. This will be our guiding principle. The following theorem, whose proof follows immediately from the proof of Theorem 2 in \cite{SY}, illustrates this philosophy:

%% THM: DOUBLE SOUL THEOREM A

\begin{doublesoulthm}\label{T:Double_Soul_Theorem} Let $M$ be a nonnegatively curved fixed-point homogeneous Riemannian $\G$-manifold. If $\mathrm{Fix}(M,\G)$ contains at least two components $X,Y$ with maximal dimension, one of which is
compact, then $M$ is diffeomorphic to an $\sphere^{k+1}$-bundle over $X$, where $\sphere^k=\G/\Hh$,
with $\G$ as structure group. 
\label{thm:codim_2_Sk_bundle}
\end{doublesoulthm}

The following lemma yields information on the distribution of the isotropy groups in the orbit space $M^*$. We refer the reader to \cite{GS_symrank} for a proof.

% LEM: REGULAR POINTS

\begin{lem}
\label{thm:points_ppal_orbits} Let $\G\times M\rightarrow M$ be an isometric fixed-point homogeneous action on a compact nonnegatively curved manifold $M$. Let $C$ be the set at maximal distance from $\partial M^*$. Then all the points in $M^*-\{C\cup
M^\G\}$ correspond to principal orbits.
\end{lem}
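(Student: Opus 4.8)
The plan is to extract the isotropy information directly from the soul construction on $M^*$ by means of Kleiner's Isotropy Lemma. Write $f=\mathrm{dist}(\cdot,F^*)\colon M^*\to[0,\infty)$ for the distance to the maximal fixed component $F^*\subset\partial M^*$, so that $C=f^{-1}(\max f)$. The first ingredient is a slice-theoretic observation: if $\gamma\colon[0,\ell]\to M^*$ is a minimal geodesic with $\gamma(0)\in F^*$, then every interior orbit $\gamma(t)$, $t\in(0,\ell)$, is principal. Indeed $\gamma(0)$ is fixed, so $\G_{\gamma(0)}=\G$; for $t$ near $0$ the point $\gamma(t)$ lies in a tubular neighborhood of $F$ and, by the Slice Theorem, its isotropy is the $\G$-stabiliser of a normal direction to $F$, which is the principal isotropy $\Hh$ precisely because $\G$ acts transitively on the normal sphere to $F$ (fixed-point homogeneity); and by Kleiner's Isotropy Lemma $\G_{\gamma(t)}=\G_\gamma$ is independent of $t\in(0,\ell)$. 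Hence $\G_{\gamma(t)}=\Hh$ for all interior $t$, so $\gamma(t)$ is a principal orbit.

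Granting this, it remains to show that every $p^*\in M^*\setminus(C\cup M^\G)$ occurs as an interior point of some minimal geodesic issuing from $F^*$. Let $a=f(p^*)<\max f$, let $x^*\in F^*$ realise this distance, and let $\tau\colon[0,a]\to M^*$ be a minimal geodesic from $x^*$ to $p^*$; note $f\circ\tau(t)=t$, so $\tau$ is a ``gradient segment'' of the concave function $f$. Since $M^*$ is a nonnegatively curved Alexandrov space, $f$ is concave, and because $p^*$ is not a maximum point of $f$ it is a regular point of $f$; using this together with the soul construction one shows that $\tau$ prolongs to a minimal geodesic $\tilde\tau\colon[0,a']\to M^*$ from $x^*$ with $a'>a$, so that $p^*=\tilde\tau(a)$ is an interior point. (Equivalently: one checks that the minimal geodesic from $p^*$ to its unique nearest point of $F^*$ is unique — non-uniqueness would force $p^*\in C$ — whence $\G_p$, which fixes $F$ pointwise and hence fixes that geodesic, must coincide with the stabiliser $\Hh$ of its initial direction.) Combining with the first paragraph gives $\G_p=\Hh$, i.e. $p^*$ is a principal orbit.

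I expect the prolongation/uniqueness step to be the main obstacle: in an Alexandrov space minimal geodesics need not extend, and a non-maximal point of a distance function need not lie in the interior of a minimizing segment to the reference set, so the argument must genuinely use the concavity of $\mathrm{dist}(\cdot,F^*)$ and the structure supplied by the soul construction on $M^*$, rather than a purely local argument near $p^*$. Once that is in place the statement is immediate; I would simply refer to \cite{GS_symrank} for the details of this step.
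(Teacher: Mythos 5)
First, note that the paper does not actually prove this lemma --- it only points to \cite{GS_symrank} --- so your argument has to stand on its own. Your first step is correct and is indeed the standard first half: by the Slice Theorem at points of $F$ and transitivity of $\G$ on the normal sphere, every orbit in a punctured tubular neighborhood of $F$ has isotropy conjugate to $\Hh$, and Kleiner's Isotropy Lemma then makes the isotropy constant and principal along the interior of any minimal geodesic emanating from $F$. The gap is exactly where you say it is, but it is worse than a missing technical detail: both devices you propose to bridge it are false, not merely hard. The set of points of $M^*$ that fail to lie in the interior of a minimizing segment from $F^*$ is the cut locus of $F^*$ in $M^*$, and this is in general strictly larger than $C$. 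Already for a fixed-point homogeneous circle action on a nonnegatively curved $\sphere^3$ whose orbit space is a convex disc without rotational symmetry (an ``ellipse-like'' domain), the cut locus of the boundary is a nondegenerate tree while $C$ is a single point; at a cut point $p^*\notin C$ the foot point on $F^*$ is not unique, no minimizing segment from $F^*$ extends past $p^*$, and yet the lemma asserts (correctly --- all such orbits are principal for this action) that $p^*$ is principal. So ``non-uniqueness forces $p^*\in C$'' and ``$\tau$ prolongs past $p^*$'' both fail, and one cannot outsource this step to \cite{GS_symrank}, because it is not what that argument does.

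The mechanism that actually closes the argument is critical point theory for the $\G$-invariant function $d(\cdot,F)$, not prolongation of geodesics. Concavity of $f=d(\cdot,F^*)$ on the nonnegatively curved space $M^*$ shows that for any $p^*$ with $0<f(p^*)<\max f$, the initial direction of a minimal geodesic from $p^*$ to $C$ makes an angle strictly greater than $\pi/2$ with \emph{every} direction of a minimal geodesic from $p^*$ to $F^*$ (first variation applied to the concave function $f\circ\sigma$, which increases from $f(p^*)$ to $\max f$). Lifting to $M$ (the projection of normal spaces of directions is distance nonincreasing), every point of $d(\cdot,F)^{-1}\bigl((0,\max f)\bigr)$ is a regular point of $d(\cdot,F)$. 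Averaging a gradient-like field over $\G$ and flowing then carries the unit normal sphere bundle of $F$ --- on which all orbits are principal by your first step --- $\G$-equivariantly onto every level set in between, so the orbit type is constant on the whole region $M^*\setminus(C\cup M^{\G})$. Your approach recovers the conclusion only off the cut locus of $F^*$, and since the non-principal stratum is closed and could a priori sit inside that cut locus, density does not rescue it.
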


% POSSIBLE SETS AT MAXIMAL DISTANCE

We now let $\G\times M^n\rightarrow M^n$ be an isometric fixed-point homogeneous action, with principal isotropy group $\mathsf{H}$, on a compact nonnegatively curved manifold $M^n$ of dimension $n\leq 4$. Let $C$ be the set at maximal distance from $F$, a component of the fixed-point set of the action with maximal dimension. Then $\dim C\leq \dim F \leq 2$. When $C$ has dimension $0$, it follows from the Soul Theorem that $C$ is a point, and the classification of nonnegatively curved fixed-point homogeneous manifolds with this orbit space structure follows immediately from the work of Grove and Searle \cite{GS}.  

When $\dim C=1$, $C^1$ is homeomorphic either to a closed interval $[-1,+1]$ or to a circle. When $C^1$ is a circle, it follows from the Isotropy Lemma~\ref{L:isotropy_lemma} that all the points in the circle have the same isotropy. When $C$ is an interval $[-1,+1]$, this Lemma implies that all the points in the interior of the interval have the same isotropy.  Let $\mathsf{K}_{-}$, $\mathsf{K}_{+}$ and $\mathsf{K}_{0}$ denote, respectively, the isotropy group of points in the subsets $\{\,-1\,\}$, $\{\, +1\,\}$ and $(-1,+1)$ of $C^1\simeq [-1,+1]$. We will refer to this triple as an \emph{isotropy triple} and will denote it by 

\[
\mathsf{K}_{-}\cdots \mathsf{K}_{0}\cdots \mathsf{K}_{+}.
\]
It follows from the Isotropy Lemma~\ref{L:isotropy_lemma} that $\mathsf{K}_0\leq \mathsf{K}_{\pm}\leq \G$.

% REM: RETRACTION

\begin{rem}\label{rem:Retraction}
A triple $\mathsf{H}\cdots \mathsf{H}\cdots \mathsf{K}$ may occur as the isotropy triple of $C^1\simeq [-1,+1]$. In this case,  the distance function to the endpoint of $C^1$ with isotropy $\mathsf{K}$ has no critical points, so we have a gradient-like vector field whose flow-lines yield a deformation retraction of $M^*$ onto the point with isotropy $\mathsf{K}$, as in the case when $\dim C=0$, in which the field corresponds to the gradient-like vector field of the distance function from $F$ to $C^0$. Hence this case reduces to the case in which $C$ is a point with isotropy $\mathsf{K}$. 
\end{rem}

% 2D ALEXANDROV SPACES

Nonnegatively curved Alexandrov spaces of dimension 2 appear as orbit spaces of fixed-point homogeneous actions, as well as sets at maximal distance from a boundary component of an orbit space. It is well-known that a 2-dimensional Alexandrov space $X$ is a topological $2$-manifold, possibly with boundary (cf. \cite{BBI}, Corollary 10.10.3). In addition, when $X$ has nonnegative curvature, we have the following result (cf. \cite{ShYa,CaoGe}).

% THM: 2D ALEXANDROV SPACES

\begin{thm}
\label{T:2d_Alex_spaces} Let $X$ be a $2$-dimensional Alexandrov space of nonnegative curvature. Then, the following hold: $X$ is homeomorphic to either $\RR^2$, $[0,+\infty]\times \RR$, $\sphere^2$, $\RP^2$, $\cldisc^2$, or isometric to $[0,l]\times\RR$, $[0,l]\times \sphere^1(r)$, $[0,+\infty]\times\sphere^1(r)$, $\RR\times\sphere^1(r)$, $\RR\times\sphere^1(r)/\Int_2$, $[0,l]\times\sphere^1/\Int_2$, a flat torus, or a flat Klein bottle for some $l, r>0$.
\end{thm}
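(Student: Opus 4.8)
\textbf{Proof proposal for Theorem~\ref{T:2d_Alex_spaces}.}

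The plan is to reduce the statement to the already-known structure theory of $2$-dimensional Alexandrov spaces, and then to use the Soul Theorem~\ref{T:soul_theorem} (in its Alexandrov-space version, due to Perelman) together with the Splitting Theorem~\ref{T:splitting_thm} to pin down exactly which spaces occur under the nonnegative curvature hypothesis. First I would recall that any $2$-dimensional Alexandrov space $X$ is a topological $2$-manifold, possibly with boundary (the cited Corollary~10.10.3 of \cite{BBI}); so the topological possibilities for the underlying space are already constrained. The argument then splits according to whether $X$ is compact or noncompact, and whether $\partial X$ is empty or not.

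In the compact case, a compact topological $2$-manifold with nonnegative curvature that is closed has, by Gauss--Bonnet-type considerations for Alexandrov surfaces, nonnegative Euler characteristic, hence is $\sphere^2$, $\RP^2$, the torus, or the Klein bottle; when the space is moreover a flat surface (zero curvature, no interior singular points with positive excess) the torus and Klein bottle are realized as the flat torus and flat Klein bottle. If $\partial X\neq\emptyset$, I would run the soul construction: the soul $\Sigma$ is a compact totally convex subset with empty boundary which is a strong deformation retract of $X$, so $\dim\Sigma\in\{0,1\}$. If $\dim\Sigma=0$ then $X$ deformation retracts to a point and $X$ is homeomorphic to the disc $\cldisc^2$; if $\dim\Sigma=1$ then $\Sigma$ is a closed geodesic circle, $X$ retracts to a circle, and a neighborhood-of-the-soul / normal-exponential argument identifies $X$ metrically as a flat product $[0,l]\times\sphere^1(r)$, or, if the holonomy around the soul is orientation-reversing, $[0,l]\times\sphere^1/\Int_2$. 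In the noncompact case I would pass to considerations at infinity: the soul is again compact with $\dim\Sigma\in\{0,1\}$ (when $\partial X\neq\emptyset$) or one uses that a noncompact nonnegatively curved surface without boundary either is homeomorphic to $\RR^2$ (soul a point) or splits as $\RR\times N$ with $N$ a closed $1$-manifold (soul a circle), giving the flat cylinder $\RR\times\sphere^1(r)$ and its $\Int_2$-quotient $\RR\times\sphere^1(r)/\Int_2$ (the open Möbius band); the half-open products $[0,+\infty]\times\RR$, $[0,+\infty]\times\sphere^1(r)$, and $[0,l]\times\RR$ arise from the soul being a point or a segment/line in the presence of boundary. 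Throughout, the rigidity — "homeomorphic to" versus "isometric to" — comes from the standard dichotomy that a nonnegatively curved surface is either genuinely positively curved somewhere (forcing the topological conclusions with no metric control, e.g. $\cldisc^2$, $\sphere^2$, $\RR^2$) or is everywhere flat, in which case the product/quotient metric descriptions are forced.

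The main obstacle, and the step I would spend the most care on, is the flat case: upgrading the topological identification of a neighborhood of a $1$-dimensional soul to an isometric splitting, and correctly accounting for the orientation-reversing holonomy that produces the $\Int_2$-quotients. Concretely, once one knows $X$ is flat with a closed geodesic soul, one must show the normal exponential map off the soul is a local isometry onto a flat product bundle, hence a quotient of $[0,l]\times\RR$ or $\RR\times\RR$ by a group of isometries preserving the first factor; classifying those groups gives exactly the listed models. This is essentially the classification of complete flat surfaces with boundary, adapted to the Alexandrov (possibly singular) setting, but since the flat hypothesis removes interior singularities the classical argument applies. I would cite \cite{ShYa,CaoGe} for the detailed execution rather than reproduce it, since the theorem is quoted from those sources.
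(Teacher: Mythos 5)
The paper does not actually prove this theorem: it is quoted from the literature with the citation to \cite{ShYa,CaoGe}, so there is no in-paper argument to compare yours against. Your outline is the standard one (topological manifold structure from \cite{BBI}, Gauss--Bonnet for the closed case, the soul construction and the dimension of the soul governing whether one gets a topological or an isometric conclusion, flat strips/splitting when the soul is a circle), and as a sketch it is essentially sound and consistent with how the cited sources proceed; deferring the flat-case rigidity to \cite{ShYa,CaoGe} is exactly what the paper itself does.

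One point in your summary is stated incorrectly and worth fixing: the dichotomy separating the ``homeomorphic to'' cases from the ``isometric to'' cases is \emph{not} ``positively curved somewhere versus everywhere flat.'' A Euclidean flat disc is everywhere flat yet appears in the list only as ``homeomorphic to $\cldisc^2$,'' and likewise a flat $\RR^2$ is only ``homeomorphic to $\RR^2$''; conversely no pointwise curvature hypothesis is what forces the product metrics. The correct organizing principle is the one you use elsewhere in your own case analysis: the dimension of the soul (equivalently, the presence of a line or a totally convex closed geodesic). A $0$-dimensional soul yields only a topological conclusion with no metric control, while a $1$-dimensional soul (or a closed flat surface, where the soul is everything) forces the isometric splitting into the listed flat products and their $\Int_2$-quotients. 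With that correction your sketch matches the standard proof.
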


% COR: 2D ALEXANDROV SPACES WITH BOUNDARY

\begin{cor}
\label{C:2d_Alex_spaces_wbdry}
A compact $2$-dimensional Alexandrov space with nonnegative curvature and non-empty boundary is homeomorphic to a closed disc $\cldisc^2$ or isometric to a flat M\"obius band $\MM^2$ or a flat cylinder $\sphere^1\times \mathbb{I}$.
\end{cor}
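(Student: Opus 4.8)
The plan is to derive Corollary~\ref{C:2d_Alex_spaces_wbdry} directly from Theorem~\ref{T:2d_Alex_spaces} by simply inspecting which of the listed model spaces are (i) compact and (ii) have non-empty boundary, and then identifying the survivors. First I would pass through the list in Theorem~\ref{T:2d_Alex_spaces}. The non-compact ones are immediately discarded: $\RR^2$, $[0,+\infty]\times\RR$, $[0,l]\times\RR$, $[0,+\infty]\times\sphere^1(r)$, $\RR\times\sphere^1(r)$, and $\RR\times\sphere^1(r)/\Int_2$ are all non-compact, so they cannot appear. Of the compact spaces, $\sphere^2$, $\RP^2$, the flat torus, and the flat Klein bottle are closed manifolds with empty boundary, so they too are excluded by the hypothesis $\partial X\neq\emptyset$. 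This leaves exactly three candidates: $\cldisc^2$, $[0,l]\times\sphere^1(r)$, and $[0,l]\times\sphere^1/\Int_2$.

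The second step is to recognize the last two candidates in the stated geometric language. The space $[0,l]\times\sphere^1(r)$ is by definition a flat cylinder, which is what the corollary calls $\sphere^1\times\mathbb{I}$ (up to rescaling the circle factor, the isometry type is a flat cylinder over some circle $\sphere^1$ of some radius and an interval $\mathbb{I}$). The quotient $[0,l]\times\sphere^1/\Int_2$, where $\Int_2$ acts by the orientation-reversing involution that swaps the two boundary circles while acting as the antipodal map (reflection) appropriately, is precisely a flat M\"obius band $\MM^2$; its single boundary circle is the image of the two boundary circles of the cylinder, and the resulting surface is non-orientable with one boundary component. One should note here that the notation in the two statements is slightly loose about the parameters $l,r$, so the cleanest phrasing is ``homeomorphic to $\cldisc^2$, or isometric to a flat M\"obius band or a flat cylinder'' — matching the corollary verbatim.

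There is essentially no hard analytic content here; the corollary is a bookkeeping consequence of the classification theorem. The only point that requires a sentence of care is verifying that $[0,l]\times\sphere^1/\Int_2$ genuinely has non-empty boundary (so it is not accidentally excluded) and that it is not homeomorphic to either of the other two survivors — it is non-orientable, hence not a disc or a cylinder, so all three cases in the corollary are genuinely distinct and all genuinely occur (each is realized by an explicit flat metric). One should also double-check that the compact members of Theorem~\ref{T:2d_Alex_spaces} with boundary are exhausted by these three: the only entry of the list phrased as a topological (rather than isometric) statement and compact with boundary is $\cldisc^2$, and the isometric entries that are both compact and have boundary are the flat cylinder and the flat M\"obius band, so the enumeration is complete.

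The main (very minor) obstacle is purely expository: making sure the reader sees that ``$[0,l]\times \sphere^1/\Int_2$ with non-empty boundary'' is the M\"obius band and not the Klein bottle — the distinction is which $\Int_2$-action is taken — and that $[0,l]\times\RR$ and $[0,l]\times\sphere^1/\Int_2$ are not conflated. Beyond that, the proof is a one-line deduction: intersect the list of Theorem~\ref{T:2d_Alex_spaces} with the conditions ``compact'' and ``$\partial\neq\emptyset$'' and read off the result.
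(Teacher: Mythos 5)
Your proposal is correct and is essentially the argument the paper intends: the corollary is stated without proof as an immediate consequence of Theorem~\ref{T:2d_Alex_spaces}, obtained exactly by discarding the non-compact and boundaryless entries of the list and recognizing $[0,l]\times\sphere^1(r)$ and $[0,l]\times\sphere^1/\Int_2$ as the flat cylinder and flat M\"obius band respectively. Nothing is missing.
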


%%%----------------------------------------------------------------------------------------------
%%% 	GENERAL THEOREMS ON FPH ACTIONS AND OTHER TOOLS
%%%----------------------------------------------------------------------------------------------

In the rest of this subsection we will assume that $M$ is a closed nonnegatively curved Riemannian manifold with a fixed-point homogeneous isometric $\Ss^1$-action. We will let $F\subset \partial M^*$ be a component of the fixed-point set and $C$ be the set at maximal distance from $F$ in $M^*$. We will study the structure of the orbit space in the case when $\dim F=\dim C$. 

% LEM: ISOTROPY IS H,  K WITH K/H=Z_2 OR G.

\begin{lem} The only possible isotropy groups in $C$ are $\Id$, $\Int_2$ and $\Ss^1$.
\label{L:orbit_structure_lemma_isotropy}
\end{lem}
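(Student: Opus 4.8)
The plan is to exploit the fact that $C$ is the set at maximal distance from $F$ in the $2$-dimensional nonnegatively curved Alexandrov space $M^*$, together with the description of $M^*$ coming from Corollary~\ref{C:2d_Alex_spaces_wbdry}. Since we are in the case $\dim F=\dim C$ and $M$ has dimension at most $4$ with $\dim F\le 2$, the first thing I would do is record that either $\dim C=1$ or $\dim C=2$ (the case $\dim C=0$ having been disposed of earlier), so that the codimension of $F$ in $M^*$ being $1$ forces $M^*$ to be $2$- or $3$-dimensional; for an $\Ss^1$-action $\dim M^* = n-1 \le 3$. In all cases $M^*$ is a manifold (possibly with boundary) by the remarks preceding the statement, and $F\subset\partial M^*$.

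The heart of the argument is a local computation of spaces of directions. For a point $p$ with orbit $\Ss^1 p$, the space of directions of $M^*$ at $p^*$ is $\sphere^\perp_p/\Ss^1_p$, where $\sphere^\perp_p$ is the unit normal sphere to the orbit. If $p^*\in C$ and $\dim C=\dim F\ge 1$, then $C$ is (locally) a submanifold of $M^*$ through $p^*$, and a minimal geodesic from $F$ to $p^*$ hits $C$ orthogonally; by Kleiner's Isotropy Lemma~\ref{L:isotropy_lemma} the isotropy along the interior of such a geodesic equals $\Ss^1_p$ and is subconjugate to $\Ss^1_p$. The key point is that the normal directions to $C$ inside $M^*$ at $p^*$ form a space isometric to a quotient of a round sphere $\sphere^j$ (of dimension $\dim M^* - \dim C - 1 \le 1$) by the (linear) action of $\Ss^1_p$, and this quotient must again be a space of directions of a manifold-with-boundary, hence a hemisphere, interval, or point. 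I would analyze the linear $\Ss^1_p$-action on the normal space $T^\perp_p$: its fixed-point set contains the tangent directions to $C$, and on the complementary (at most $2$-dimensional) summand $\Ss^1_p$ acts linearly. A nontrivial finite subgroup of $\Ss^1$ acting linearly and effectively on $\RR^2$ must contain $-\Id$ or act by rotations; the requirement that the quotient of the normal sphere be a sphere of directions of a topological manifold (Corollary~\ref{C:2d_Alex_spaces_wbdry} in the ambient dimension $2$ case, or its $1$-dimensional analogue: the space of directions is $[0,\pi/k]$ or $\sphere^1(1/k)$, which is itself a space of directions of a $2$-manifold only for $k\le 2$) pins down $\Ss^1_p\in\{\Id,\Int_2,\Ss^1\}$. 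The case $\Ss^1_p=\Ss^1$ corresponds to $p^*\in F$-type fixed points, which can occur when $\dim C=\dim F$ and $F$ is not the unique maximal component.

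Concretely, I would argue as follows. Step 1: show $\dim M^*\le 3$ and $M^*$ is a manifold with $F$ a boundary component; reduce to the local picture at $p^*\in C$. Step 2: observe $\Ss^1_p$ acts linearly on $T^\perp_p M^n$ (Slice Theorem), fixing the normal directions to the orbit that project to tangent directions of $C$, and acting on a complement $V$ with $\dim V\le 2$. Step 3: since $S_{p^*}M^* = \sphere^\perp_p/\Ss^1_p$ is the space of directions of a $2$- (or $3$-)dimensional Alexandrov space that is a manifold, use Corollary~\ref{C:2d_Alex_spaces_wbdry} and its one-dimensional analogue to conclude that the effective part of the $\Ss^1_p$-action on $V$ has cyclic image of order $1$ or $2$ (order $k$ rotations would produce a cone point of cone angle $2\pi/k$ in $S_{p^*}M^*$, which is the space of directions of a $2$-manifold only for $k\le 2$; a reflection subgroup would produce a boundary point, again forcing $\Int_2$). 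Step 4: conclude $\Ss^1_p\in\{\Id,\Int_2,\Ss^1\}$ and note $\Ss^1$ occurs only at points of $M^\G$, i.e. when $p^*\in F'$ for another maximal fixed-point component, consistent with $\dim C=\dim F$.

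The main obstacle I anticipate is handling the possibility $\dim M^* = 3$ (i.e. $n=4$) cleanly: there the normal sphere $\sphere^\perp_p$ to a principal orbit is $\sphere^2$, the space of directions $\sphere^2/\Ss^1_p$ must be a nonnegatively curved $2$-Alexandrov space that is itself a space of directions (hence $\sphere^2$, $\RP^2$, $\cldisc^2$, or a spherical suspension), and one must rule out $\Ss^1_p$ being a cyclic group $\Int_k$ with $k\ge 3$ acting by rotation, which gives $\sphere^2/\Int_k$ — a sphere with two cone points of angle $2\pi/k$. This is a topological $2$-sphere, so it does not contradict being a manifold; the extra input needed is that it must arise as $S_{p^*}M^*$ for a point in a $3$-manifold $M^*$, which further forces it to be a round $\sphere^2$ or $\RP^2$ as a metric space of directions. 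Getting this metric (not just topological) constraint stated precisely — presumably via the structure of $C$ as the soul-type set, where nearby the distance function from $F$ is regular and the slice structure is rigid — is the delicate step; I would lean on Lemma~\ref{thm:points_ppal_orbits}, which says points of $M^*\setminus(C\cup M^\G)$ are principal, to control what isotropy groups can appear adjacent to $C$ and thereby constrain $\Ss^1_p$ itself.
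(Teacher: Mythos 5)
Your setup (slice representation, the decomposition $T_{p^*}M^*\simeq F_p\times(F_p^{\perp})/\Ss^1_p$, orthogonality of a minimal geodesic from $p^*\in C$ to $F$) matches the paper's, but the step that is supposed to exclude $\Ss^1_p=\Int_k$ with $k\ge 3$ fails. Your mechanism is that an order-$k$ rotation would create a cone point of angle $2\pi/k$ (or a non-round suspension as space of directions), and that this is incompatible with $M^*$, respectively $S_{p^*}M^*$, being a topological manifold. That is not a constraint: a $2$-dimensional Alexandrov space with cone points of arbitrarily small angle is still a topological $2$-manifold (Corollary~\ref{C:2d_Alex_spaces_wbdry} is a statement about homeomorphism type and says nothing about cone angles), and a point of a $3$-dimensional Alexandrov space which is a topological $3$-manifold can have space of directions the spherical suspension of a circle of length $2\pi/k$ --- a non-round topological $\sphere^2$. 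Your closing claim that membership in a $3$-manifold ``forces it to be a round $\sphere^2$ or $\RP^2$ as a metric space of directions'' is false. Indeed, the paper's Examples~\ref{d4:S1_ex_A} and~\ref{d4:S1_ex_B} produce orbit spaces that are solid tori containing a circle of $\Int_q$-isotropy for \emph{every} $q\ge 2$; these do not contradict the Lemma only because there $\dim C=1<2=\dim F$. So manifoldness alone cannot prove the statement, and your argument as written would ``prove'' something false. You flag the issue yourself at the end, but the repairs you suggest (roundness of links at manifold points; Lemma~\ref{thm:points_ppal_orbits}) are respectively false and insufficient.

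The missing idea is to use the standing hypothesis $\dim C=\dim F$ --- i.e.\ that $C$ has codimension one in $M^*$ --- quantitatively, together with the orthogonality you already noted. If $\Ss^1_p=\Int_k$ with $k\ge 3$, then $F_p^{\perp}$ has dimension $2$ and $\sphere(F_p^{\perp})/\Int_k$ is a circle of length $2\pi/k$, hence of diameter $\pi/k<\pi/2$. The orbit stratum through $p^*$ lies in $C$ and has codimension at least $2$ in $M^*$, so, since $C$ has codimension one, there are tangent directions to $C$ at $p^*$ lying in $\sphere(F_p^{\perp})/\Int_k$; the direction of a minimal geodesic from $p^*$ to $F$ also lies in this set and must make angle at least $\pi/2$ with every tangent direction to $C$. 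Two directions at distance at least $\pi/2$ cannot both lie in a set of diameter less than $\pi/2$ --- that is the contradiction, and it is exactly where the hypothesis $\dim C=\dim F$ enters. Your proposal never combines the orthogonality with this diameter bound, which is the whole content of the paper's proof.
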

\begin{proof}
Let $p^*\in C$ be a point with finite isotropy $\Ss^1_p=\Int_k$, $k\geq 3$. Let $T_p^{\perp}$ be the normal space to the orbit $\Ss^1 p$ at $p$ and let  $F_p=(T_p^{\perp})^{\Ss^1_p}$. We let $F_p^{\perp}$ be the orthogonal complement of $F_p$ in $T_p^{\perp}$. The tangent space $T_{p^*}$ to $M^*$ at $p^*$ can be written as $T_{p^*}\simeq F_p\times (F_p^{\perp})/\Ss^1_p$ and $F_p$ is isomorphic to the tangent space at $p^*$ of the orbit stratum containing $p^*$. Observe that the cone $(F_p^{\perp})/\Ss^1_p$ contains all directions perpendicular to this orbit stratum in $M^*$. Now, let $\gamma$ be a minimal geodesic in $M^*$ joining $p^*$ with $F\subset \partial M^*$. Observe that $\gamma$ is perpendicular to $C$, which has codimension $1$ in $M^*$. Since the orbit stratum containing $p^*$ must be contained in $C$, the direction of $\gamma$  must be contained in $\sphere( F_p^{\perp})/\Ss^1_p$, the quotient of the unit sphere $\sphere (F_p^\perp)$ of $F_p^\perp$ by the isotropy group $\Ss^1_p$. On the other hand, $\sphere(F_p^{\perp})/\Ss^1_p = \sphere(F_p^{\perp})/\Int_k$ has diameter $\pi/2$ so  $\gamma$ cannot be orthogonal to $C$, which has codimension 1 in $M^*$. \end{proof}

We will now consider two cases: $C\subset \partial M^*$ and $\partial M^*=F$.

% LEM A

\begin{lem}
\label{L:orbit_structure_lemma_A} If $C\subset \partial M^*$, then either $C$ is a fixed-point set component or all the points in $C$ have isotropy $\Int_2$. Moreover, $C$ and $F$ are isometric and $M^*$ is isometric to a product $F\times \mathbb{I}$.
\end{lem}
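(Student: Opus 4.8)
The plan is to analyze the double soul construction in the boundary‐degenerate case $C\subset\partial M^*$. First I would observe that since $M$ is a closed nonnegatively curved $\Ss^1$-manifold with $\dim F=\dim C$ and $C\subset \partial M^*$, both $F$ and $C$ are $d$-dimensional submanifolds of the $(d+1)$-dimensional boundary of $M^*$, where $d=\dim M^*-1$. By the Soul Theorem~\ref{T:soul_theorem}, $C$ is a totally convex compact subset of $M^*$ with $\partial C=\emptyset$ that is a strong deformation retract of $M^*$; in particular $\dim M^*\leq d+1$ forces $M^*$ to be itself $(d+1)$-dimensional, and the distance function $\mathrm{dist}(\cdot,F)$ has no critical points on $M^*\setminus(F\cup C)$. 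The idea is that the gradient flow of this distance function gives an isometry between a collar of $F$ and a collar of $C$, and since $F$ and $C$ together with the flow lines fill up $M^*$, the whole orbit space is a metric product $F\times[0,\ell]=F\times\mathbb{I}$, with $C$ and $F$ isometric to the two ends.

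The key technical input is that, because $C$ sits inside $\partial M^*$ and has the \emph{same dimension} as $F$, every minimal geodesic from a point of $C$ to $F$ must leave $C$ orthogonally from the interior of $M^*$ — there is genuinely a one-parameter family of such geodesics sweeping out $M^*$, and no "extra" directions. Concretely, at $p^*\in C$ the tangent cone splits as $T_{p^*}M^*\cong T_{p^*}C\times (\text{one-dimensional cone of inward normal directions})$, so the space of directions at $p^*$ pointing into $M^*$ is a single point; dually this must match the space of directions at the corresponding point of $F$. Then a Toponogov/rigidity argument (as in the proof of Theorem 2 in \cite{SY}, and in the spirit of the Splitting Theorem~\ref{T:splitting_thm} applied to a suitable cover/lift, or directly via the soul construction rigidity) shows that all these geodesics have the same length $\ell$ and that the map $F\times[0,\ell]\to M^*$ sending $(x,t)$ to the point at distance $t$ along the geodesic from $x$ is an isometry onto $M^*$. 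This gives both the isometry $C\cong F$ and the product structure $M^* \cong F\times\mathbb{I}$ simultaneously.

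For the isotropy statement, I would invoke Lemma~\ref{L:orbit_structure_lemma_isotropy}, which already rules out $\Int_k$ for $k\geq 3$: the only possible isotropy groups occurring in $C$ are $\Id$, $\Int_2$, and $\Ss^1$. If some point of $C$ has isotropy $\Ss^1$ then, since by Lemma~\ref{thm:points_ppal_orbits} all points of $M^*\setminus(C\cup M^\G)$ are principal and $C$ is connected with constant isotropy along it (the Isotropy Lemma~\ref{L:isotropy_lemma} forces the isotropy to be locally constant in the interior; combined with the product structure $C$ is a single orbit stratum), the entire set $C$ consists of fixed points, i.e., $C$ is a fixed-point set component. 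Otherwise every point of $C$ is principal or has isotropy $\Int_2$; again by constancy of isotropy along $C$, either all points are principal (but then $C\subset\partial M^*$ with principal isotropy would force $\partial M^*$ to come from a nontrivial quotient by $\Ss^1$ along $C$, contradicting principality — so this subcase collapses into the $\Int_2$ alternative or into $C$ being $\Ss^1$-fixed) or all points of $C$ have isotropy $\Int_2$. That yields the dichotomy in the statement.

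The main obstacle I anticipate is making the rigidity step fully rigorous: one must argue that \emph{all} minimal geodesics from $C$ to $F$ have a common length and that they foliate $M^*$ without focal points or branching, so that the naive flow map is a genuine global isometry rather than merely a local diffeomorphism. This is exactly where one leans on the detailed argument behind Theorem 2 of Searle–Yang \cite{SY} — essentially a soul-type rigidity/normal-exponential argument for Alexandrov spaces — together with the observation that the codimension-one, equidimensional configuration leaves no room for the flow to degenerate. A secondary point requiring care is the interplay between "$C$ is a single orbit stratum" and the constancy of isotropy: one needs that $C$, being the image of a totally convex soul and lying in $\partial M^*$, cannot meet two distinct strata, which follows from the Isotropy Lemma applied along geodesics inside the convex set $C$ together with the just-established product structure.
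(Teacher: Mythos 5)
Your handling of the second assertion (the isometry $C\cong F$ and the product structure $M^*\cong F\times\mathbb{I}$) matches the paper, which simply defers to the proof of Theorem 2 of Searle--Yang, so there is no issue there. The gap is in the isotropy dichotomy, which is the part the paper actually proves. The crux is to rule out \emph{trivial} isotropy on $C$, and your argument for this is not a proof: the parenthetical ``would force $\partial M^*$ to come from a nontrivial quotient by $\Ss^1$ along $C$, contradicting principality --- so this subcase collapses into the $\Int_2$ alternative or into $C$ being $\Ss^1$-fixed'' is circular in its first half and logically wrong in its second (a subcase that leads to a contradiction does not ``collapse into'' another alternative; it simply cannot occur). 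The correct argument, which is the paper's, is one you in fact write down in your second paragraph but never deploy for the isotropy claim: at $p^*\in C\subset\partial M^*$ the tangent cone splits as $T_{p^*}\simeq F_p\times(F_p^{\perp})/\Ss^1_p$ with $F_p$ tangent to the orbit stratum of $p^*$, and since $p^*$ is a boundary point of $M^*$ its space of directions must have boundary, which forces $\Ss^1_p$ to act transitively on the unit sphere $\sphere(F_p^{\perp})$; this leaves only $\Ss^1_p=\Ss^1$ or $\Int_2$ and in particular excludes $\Id$ (whose quotient would be a round sphere of directions, i.e.\ an interior point).

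Second, your mechanism for the constancy of isotropy along $C$ does not work as stated. The Isotropy Lemma applied to a minimal geodesic inside $C$ only says that the isotropy at interior points is a \emph{subgroup} of the isotropy at the endpoints; that is perfectly consistent with, say, $\Ss^1$ at two points of $C$ and $\Int_2$ or $\Id$ in between, so it cannot by itself show that $C$ is a single stratum. The paper instead extracts constancy from the same tangent-cone decomposition: since $(F_p^{\perp})/\Ss^1_p$ is one-dimensional, the stratum through $p^*$ has dimension $\dim M^*-1=\dim C$ and is therefore a subset of $C$ that is open in $C$; as this holds at every point of $C$ and $C$ is connected, all of $C$ carries the same isotropy. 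So the missing idea is not a new one --- it is the boundary-point/space-of-directions computation you already sketch --- but it has to carry both halves of the first assertion, and as written your proof does not make it do so.
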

\begin{proof}
A point $p^*$ in $M^*$ is a boundary point if its space of directions $S_{p^*}$ has boundary. Consider the tangent space decomposition $T_{p^*}\simeq F_p\times (F_p^{\perp})/\Ss^1_p$. For $p^*$ to be a boundary point, $\Ss^1_p$ must act transitively on the unit sphere $\sphere (F_p^\perp)$ of $F_p^\perp$ so $\Ss^1_p$ is either $\Ss^1$ or $\Int_2$. Recall that $F_p$ is the tangent space of the orbit stratum of $p^*$ so it follows from the tangent space decomposition that the orbit stratum with $\Ss^1_p$ isotropy is a subset of $C$ of the same dimension. Hence all the points in $C$ must also have isotropy $\Ss^1_p$. The second assertion in the theorem follows from the proof of Theorem 2 in \cite{SY} (cf. Theorem~\ref{T:Double_Soul_Theorem} above). 
\end{proof}

% LEM B

\begin{lem}
\label{L:orbit_structure_lemma_B} Suppose $\partial M^*=F$. 
	\begin{itemize}
		\item[(1)] If $\partial C=\emptyset$, then all the points in $C$ have principal isotropy, $F$ is a double-cover of $C$ and the covering map is a local isometry.\\
		\item[(2)] If $\partial C\neq \emptyset$, then all the points in $\mathrm{int}\, C$ are principal. 
	\end{itemize}
\end{lem}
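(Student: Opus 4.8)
The plan is to analyze the tangent space decomposition $T_{p^*}\simeq F_p\times(F_p^\perp)/\Ss^1_p$ at each point $p^*\in C$, together with the soul construction, to pin down the isotropy along $C$. Recall that $\partial M^*=F$ is the only boundary component here, so $C$ is the set at maximal distance from $F$, and by hypothesis $\dim F=\dim C$; since $F\subset\partial M^*$ is a fixed-point component, $\dim F\le 2$, so $\dim C\le 2$ as well. By Lemma~\ref{L:orbit_structure_lemma_isotropy} the only isotropy groups that can appear in $C$ are $\Id$, $\Int_2$ and $\Ss^1$. The point with isotropy $\Ss^1$ would be a fixed point, hence would lie in $M^\G=F=\partial M^*$; but $C$ lies at positive (maximal) distance from $F$, so $\Ss^1$ cannot occur as an isotropy group in $C$. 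This leaves only $\Id$ and $\Int_2$.

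For part~(1), suppose $\partial C=\emptyset$. First I would observe that a point $q^*\in C$ with $\Int_2$ isotropy would be a boundary point of $M^*$: indeed $F_q^\perp$ would then be (at least) one-dimensional with $\Int_2$ acting by reflection, so $(F_q^\perp)/\Ss^1_q$ has boundary, whence $S_{q^*}M^*$ has boundary and $q^*\in\partial M^*=F$ — again impossible since $q^*\in C$ is at maximal distance from $F$. Hence every point of $C$ has \emph{principal} isotropy $\Id$, proving the first clause. For the covering statement, I would use the normal exponential map / gradient flow of the distance function $d_F$ from $F$: since $C$ is the soul and $\partial C=\emptyset$, the distance function from $C$ is regular on $M^*\setminus(F\cup C)$, and the flow identifies $M^*$ with a disk bundle (actually an interval bundle, since $C$ has codimension one in the closed manifold-without-boundary double, or directly since the normal sphere to $C$ in $M^*$ is a point or $\sphere^0/(\text{something})$). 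Concretely, over the principal part $M^*\setminus(F\cup C)$ the action in $M$ is by principal orbits, and the normal sphere $\sphere^\perp_p$ to $\G p$ quotiented by the principal isotropy is $\sphere^1/\Id=\sphere^1$ near $C$; matching this with the structure near $F$ forces $F\to C$ to be a two-to-one Riemannian cover. I would make this rigorous by noting that $M$ restricted over a tubular neighborhood of $C$ fibers as an $\sphere^1$-bundle over the $\Ss^1$-manifold $\pi^{-1}(C)$, whose boundary doubles onto $F$; comparing the geometry (everything totally geodesic, the metric a product to first order via the Soul Theorem) gives that the covering map is a local isometry.

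For part~(2), suppose $\partial C\neq\emptyset$. Then $C$ is a $2$-dimensional nonnegatively curved Alexandrov space with boundary, or a $1$-dimensional one (an interval); by Corollary~\ref{C:2d_Alex_spaces_wbdry} (resp.\ directly) its interior points are interior points of $C$. For an interior point $p^*\in\mathrm{int}\,C$, I claim the isotropy is principal. Arguing as before, if $\Ss^1_p=\Int_2$ then $p^*$ would be a boundary point of $M^*$, i.e.\ $p^*\in F$ — contradiction. The only remaining subtlety is ruling out that $\mathrm{int}\,C$ contains points of $\Int_2$ isotropy that are \emph{not} boundary points of $M^*$; but by Lemma~\ref{L:orbit_structure_lemma_isotropy}'s tangent-space analysis, $\Int_2$ isotropy at $p^*$ with $p^*\notin\partial M^*$ would require the $\Int_2$ to act trivially on $F_p^\perp$, i.e.\ $\Int_2$ fixes a neighborhood of the normal disk — forcing $p^*$ into $M^\G=F$, again a contradiction. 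Hence all interior points of $C$ are principal.

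\emph{Main obstacle.} The genuinely delicate step is part~(1): extracting from the soul/Alexandrov structure that $F\to C$ is precisely a \emph{double} cover and a \emph{local isometry}, rather than just some covering. The cardinality ``two'' comes from the normal sphere to an $\Ss^1$-orbit being $\sphere^1$ and the fixed-point set $F$ being a ``double'' of $C$ in the sense of the Double Soul Theorem~\ref{T:Double_Soul_Theorem} and the proof of Theorem~2 in~\cite{SY}; I would lean on that proof, noting that $M=D(F)\cup_E D(B)$ with $D(B)$ an $\sphere^1$-invariant tube over $\pi^{-1}(C)$, and that the retraction structure forces $E\to C$ and $F\to C$ to agree, giving the $2$-fold local isometry. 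Handling the case $\dim C=1$ versus $\dim C=2$ uniformly, and making sure no orbits with finite isotropy $\Int_k$, $k\ge 3$, sneak in (already excluded by Lemma~\ref{L:orbit_structure_lemma_isotropy}) are the bookkeeping points to be careful about.
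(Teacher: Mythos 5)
Your central claim --- that $\Int_2$ isotropy at $p^*$ forces $p^*$ to be a boundary point of $M^*$ --- is false, and both parts of your argument rest on it. If $\Ss^1_p=\Int_2$ and $\dim F_p^\perp\geq 2$, then $\Int_2$ acts as $-\mathrm{id}$ on $F_p^\perp$, so $\sphere(F_p^\perp)/\Int_2$ is a closed spherical space form (a circle of length $\pi$ when $\dim F_p^\perp=2$) with \emph{no} boundary; hence $S_{p^*}M^*$, the join of $\sphere(F_p)$ with $\sphere(F_p^\perp)/\Int_2$, has no boundary and $p^*$ is an interior point of $M^*$. The paper itself later exhibits exactly such points: in the proof of Theorem B(4) the points of $\partial C^2$ carrying $\Int_2$ isotropy (e.g.\ for $\RP^4$) are interior points of $M^*$. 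So any argument excluding $\Int_2$ isotropy purely from ``$p^*\notin\partial M^*$'' proves too much. The same defect sinks your part (2): your fallback claim that $\Int_2$ isotropy at an interior point would force $\Int_2$ to act trivially on $F_p^\perp$ is not what Lemma~\ref{L:orbit_structure_lemma_isotropy} gives. Your exclusion of $\Ss^1$ isotropy is also misjustified: $M^\G$ need not equal $F$, and isolated fixed points are interior points of the orbit space; the correct reason is again a diameter/boundary argument, not membership in $\partial M^*$.

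What is missing is the use of the \emph{position} of $p^*$ in $C$. In part (1), since $\partial C=\emptyset$ and $C$ has codimension one, the tangent directions of $C$ fill out all of $\sphere(F_p)$, and the direction of a minimal geodesic from $p^*$ to $F$ is perpendicular to $C$, hence lies in $\sphere(F_p^\perp)/\Int_2$ at distance $\pi/2$ from a codimension-one subset of $S_{p^*}$; combined with $\mathrm{diam}\,\sphere(F_p^\perp)/\Int_2=\pi/2$ this rigidity forces $\sphere(F_p^\perp)/\Int_2$ to be a spherical cone, and \emph{only then} is $p^*$ a boundary point. The cases $\dim F_p^\perp=1$ (where $\Int_2$ acts transitively on $\sphere^0$) and $\dim F_p^\perp=0$ (where one must invoke effectiveness, since $\Fix(M,\Int_2)$ would be all of $M$) require separate treatment, which you omit. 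At a point of $\partial C$ the tangent cone of $C$ is only a half-space, the rigidity fails, and $\Int_2$ can occur --- which is why part (2) needs a different argument entirely: the paper parallel-translates tangent vectors of $C$ along minimal geodesics to $F$ (using $\curv\geq 0$) to build a local isometry $C-E^*\to F$, so that $\mathrm{cl}(C-E^*)$ is isometric to a subset of the manifold $F$ and $\mathrm{int}\,C$ has no singular points. You have no substitute for this step. The topological portion of (1) (the interval-bundle structure of $M^*$ over $C$ and the connected boundary $F$ double-covering $C$) matches the paper and is fine.
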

\begin{proof}
We first prove (1). Let $p^*\in C$ and suppose that $p$ has isotropy group $\Ss^1_p$. Observe that $p^*$ is an interior point of $M^*$. The only possible isotropy groups in $C$ are $\Ss^1$, $\Int_2$ and $\Id$.  Suppose first that $\Ss^1_p=\Int_2$ and consider the tangent space decomposition $T_{p^*}\simeq F_p\times (F_p^{\perp})/\Int_2$. Observe first that $\Int_2$ acts freely on $F^\perp_p$. If $\dim F^\perp_p\geq 2$, then $\mathrm{diam}\, \sphere(F_p^{\perp})/\Int_2=\pi/2$. Let $\gamma$ be a minimal geodesic joining $p^*$ with $F\subset \partial M^*$. Observe that $\gamma$ is perpendicular to $C$, so its direction must be contained in $\sphere(F_p^{\perp})/\Int_2$. Moreover, since $C$ has codimension $1$ in $M^*$, the direction of $\gamma$ is at a distance $\pi/2$ from a codimension 1 subset of $\sphere(F_p^{\perp})/\Int_2$ and it follows that $\sphere(F_p^{\perp})/\Int_2$ is a spherical cone, which implies that $p^*$ is a boundary point, which is a contradiction. If $\dim F^\perp_p=1$, then $\Int_2$ acts transitively on $F^\perp_p$ so $p^*$ is a boundary point, which is a contradiction. Finally, if $\dim F^\perp_p=0$, then the $\Int_2$ orbit stratum has dimension $\dim M -1$. This implies that $\Fix(M,\Int_2)=M$ which contradicts our assumption that the action is effective. If $p^*$ has isotropy $\Ss^1$, then we have $\mathrm{diam}\, \sphere(F_p^{\perp})/\Ss^1=\pi/2$ so $p^*$ must be a boundary point, which is a contradiction.   Hence the only possible isotropy group in $C$ must be $\Id$.
The other assertions follow from the observation that $M^*$ is a manifold with boundary $\partial M^*=F$. Then the Soul Theorem implies that $M^*$ is a line bundle over $C$ and, since $\partial M^*=F$ is connected, it must double-cover  $C$.

To prove part (2), let $p^*$ be a regular point in $C$. Let $\gamma$ be a minimal geodesic from $p^*$ to $F$ and $v$ a tangent vector to $C$ at $p^*$. Parallel translation of $v$ along $\gamma$ is an isometry, since $\curv\geq 0$. In this way we construct a local isometry $\varphi:(C-E^*)\rightarrow F$, where $E^*$ is the set of exceptional orbits. Moreover, this map is an isometry except on $E^*$. Hence $\mathrm{cl}(C-E^*)$ is isometric to a subset of $F$ and, in particular, since $F$ is a manifold, there cannot be any singular points in $\mathrm{int}\, C$. 
\end{proof}

%-----------------------------------------------------------
%	COHOMOGENEITY ONE MANIFOLDS
%-----------------------------------------------------------

\subsection{Manifolds of cohomogeneity one}
\label{subsec:cohomone_mfds} The classification of fixed-point homogeneous  manifolds of cohomogeneity one follows from the work of Grove and Searle in \cite{GS}.
 The analysis carried out in \cite{GS} also applies when $M$ admits a fixed-point homogeneous cohomogeneity one action, independently of any curvature assumptions. In particular, the following result is an immediate consequence of the method of proof of the  Classification Theorem~2.8 in \cite{GS}.
 
 % COR: FPH NONNEG CURVED COHOM 1 MANIFOLDS
 
 \begin{cor}
 \label{C:cohom1_fph_actions} Let $M$ be a closed, connected Riemannian manifold with a fixed-point homgeneous $\G$-action of cohomogeneity one.
 \\ 
 \begin{itemize}
 	\item[($a_n$)] If $\G=\SO(n)$, then $M$ is $\G$-equivariantly diffeomorphic to $\sphere^n$ or $\RP^n$.
	\\
	\item[($b_n$)] If $\G=\SU(n)$, then $M$ is $\G$-equivariantly diffeomorphic to $\sphere^{2n}$, $\RP^{2n}$ or $\CP^n$.
	\\ 
	\item[($c_n$)] If $\G=\Sp(n)$, then $M$ is $\G$-equivariantly diffeomorphic to $\sphere^{4n}$, $\sphere^{4n}/\Gamma$ $(\Gamma\subset \Sp(1))$, $\CP^{2n}$ or $\mathbb{HP}^n$.
	\\
	\item[($d$)] If $\G=\G_2$, then $M$ is $\G$-equivariantly diffeomorphic to $\sphere^7$ or $\RP^7$.
	\\
	\item[($e$)] If $\G=\mathsf{Spin}(7)$, then $M$ is $\G$-equivariantly diffeomorphic to $\sphere^8$ or $\RP^8$.
	\\
	\item[($f$)] If $\G=\mathsf{Spin}(9)$, then $M$ is $\G$-equivariantly diffeomorphic to $\sphere^{16}$, $\RP^{16}$ or $\mathbf{Ca}\mathbb{P}^2$.
 \end{itemize} 
 \end{cor}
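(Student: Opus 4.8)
The plan is to run the Grove--Searle soul/disk-bundle argument from \cite{GS} verbatim, keeping track only of the topology and dropping the curvature hypothesis where it is not needed. Let $M$ be closed and connected with a fixed-point homogeneous $\G$-action of cohomogeneity one. Since the action is cohomogeneity one, $M^*$ is a $1$-manifold, hence either a circle or a closed interval; because $M^\G\ne\emptyset$ has codimension $1$ in $M^*$, the orbit space must be an interval $[0,L]$ with the fixed component $F$ sitting over one endpoint. First I would record, from the Slice Theorem applied at a point of $F$, that a tubular neighborhood $D(F)$ of $F$ is the associated disk bundle $\G\times_{\Hh}D^{k+1}$, where the normal sphere $\sphere^k=\G/\Hh$ is acted on transitively by $\G$; this is exactly one of the pairs $(\G,\Hh)$ in \eqref{L:gps_trans_spheres}. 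Thus $F$ is a single point or, more precisely, in the relevant cases one checks $\dim F = n - (k+1)$ and, since the action is cohomogeneity one on $M^n$ while it is transitive (cohomogeneity zero) on the normal sphere, $F$ must itself be a homogeneous space $\G/\G$, i.e.\ $\dim F = 0$; so $F$ is a point and $n=k+1$.

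Next I would examine the other end of the interval. Over the endpoint opposite $F$ sits a single non-principal orbit $\G/\K$ (the "other singular orbit"), and $M$ is reconstructed as the union of the two disk bundles
\[
M \;=\; D(F)\,\cup_{E}\,D(B) \;=\; \bigl(\G\times_{\Hh}D^{k+1}\bigr)\cup_{\G/\Hh}\bigl(\G\times_{\K}D^{j+1}\bigr),
\]
glued along the common boundary $E=\G/\Hh=\sphere^k$. The group-diagram data $\Hh\le\K\le\G$ is then constrained: $\K$ acts on the normal sphere $\sphere^{j}$ to $B$ with $\Hh$ as principal isotropy, and $\Hh$ has corank essentially $1$ in $\K$ (the cohomogeneity-one condition). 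Running through the list \eqref{L:gps_trans_spheres} case by case — $(a_n)$, $(b_n)$, $(c_n)$, $(d)$, $(e)$, $(f)$ — one enumerates the finitely many possibilities for $\K$ (equivalently, for the second singular orbit), and for each the gluing of the two explicit disk bundles yields precisely the manifolds listed: a sphere when the second orbit is again a point (so $M=D^{k+1}\cup D^{k+1}=\sphere^{k+1}$), the projective space $\RP^{k+1}$ (resp.\ $\CP^n$, $\mathbb{HP}^n$, $\mathbf{Ca}\mathbb{P}^2$) when $\K$ is the appropriate maximal subgroup, and the extra quotients $\sphere^{4n}/\Gamma$ in the $\Sp(n)$ case coming from the freedom in $\K\cap Z(\Sp(1))$. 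Equivariance of the diffeomorphism is automatic because the whole reconstruction is equivariant: $M$ is determined up to $\G$-equivariant diffeomorphism by the group diagram $\Hh\le\{\K,\G\}$.

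The one point where I would be most careful — and which I expect to be the main obstacle — is verifying that no curvature is secretly being used in the Grove--Searle argument at the step where they identify the second singular orbit. In \cite{GS} nonnegative (indeed positive) curvature enters to control the distance function from $F$ and to guarantee the soul $C$ at maximal distance is a single orbit via a critical-point argument; here, in cohomogeneity one, this is free because $M^*$ is a compact interval and $C$ is literally the opposite endpoint, a single point, hence a single orbit. So the classification of the isotropy data reduces to pure representation theory of the pairs in \eqref{L:gps_trans_spheres}, exactly as in \cite[§2]{GS}, and the corollary follows. The remaining bookkeeping — matching each group diagram to the named manifold and checking the two disk bundles glue as claimed — is routine and identical to the computations in \cite{GS}, so I would simply cite the method of proof of Classification Theorem~2.8 there rather than reproduce it.
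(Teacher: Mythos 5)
Your proposal is correct and follows the same route as the paper, which likewise disposes of this corollary by observing that the method of proof of the Classification Theorem~2.8 in \cite{GS} applies verbatim once one notes that in cohomogeneity one the orbit space is forced to be an interval with the fixed point over an endpoint, so that the second singular orbit is located for free and no curvature enters. The only blemish is the formula $D(F)\cong\G\times_{\Hh}D^{k+1}$ (whose dimension is $2k+1$, not $n$): since $F$ is a single fixed point, its tubular neighborhood is the linear disk $\G\times_{\G}D^{k+1}\cong D^{k+1}$, exactly as your subsequent conclusion that $F$ is a point with $n=k+1$ requires.
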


We point out that Hoelscher \cite{Ho} also classified simply-connected, fixed-point homogeneous manifolds, without any curvature assumptions (cf. Proposition~1.23 in \cite{Ho}).

Observe that a $2$-dimensional fixed-point homogeneous manifold must have cohomogeneity one. The classification of these manifolds is then a particular case of Corollary~\ref{C:cohom1_fph_actions}: 

\begin{cor}
\label{C:Dim2}
 Let $M^2$ be a $2$-dimensional fixed-point homogeneous $\G$-manifold.  Then $\G=\Ss^1$ and $M^2$ is equivariantly diffeomorphic to $\sphere^2$ or $\RP^2$.
\end{cor}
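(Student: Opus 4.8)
The plan is to derive Corollary~\ref{C:Dim2} as an immediate special case of Corollary~\ref{C:cohom1_fph_actions}. First I would observe that any $2$-dimensional fixed-point homogeneous $\G$-manifold $M^2$ necessarily has cohomogeneity one: by definition the fixed-point set $M^\G$ has codimension $1$ in the orbit space $M^*$, and since $M^\G$ is a nonempty union of totally geodesic submanifolds of $M$ (hence of dimension at most $2$, with the fixed-point-homogeneous hypothesis forcing it to have strictly positive codimension in $M$), the orbit space $M^*$ must be at least $1$-dimensional; but a $2$-dimensional orbit space would force $\G$ to act trivially, contradicting effectiveness and nonemptiness of $M^\G$ together with the codimension-$1$ condition. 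Hence $\dim M^* = 1$, i.e., the action is of cohomogeneity one.

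Next I would identify which groups in the list \eqref{L:gps_trans_spheres} can occur. A fixed-point homogeneous cohomogeneity-one action on $M^2$ requires $\G$ to act transitively on a normal sphere to a component $F$ of $M^\G$; this normal sphere has dimension $\dim M - \dim F - 1 \le 1$. If the normal sphere is $\sphere^1$, then from the classification of transitive actions on spheres the only possibility (up to passing to a subgroup, as allowed in the setup of Section~\ref{S2:FPH_mfds}) is $(\G,\Hh) = (\SO(2),\SO(1)) = (\Ss^1,\{1\})$, which is case $(a_2)$. The case of a normal $\sphere^0$ would make $M^\G$ codimension one in $M$ but not yield a fixed-point homogeneous action in the relevant sense here (it would not give a transitive action on a normal sphere of positive dimension), so the genuine case is $\G = \Ss^1 = \SO(2)$.

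With $\G = \SO(2)$ established, the conclusion follows directly by applying Corollary~\ref{C:cohom1_fph_actions}, case $(a_n)$ with $n=2$: a closed connected Riemannian manifold with a fixed-point homogeneous $\SO(2)$-action of cohomogeneity one is $\G$-equivariantly diffeomorphic to $\sphere^2$ or $\RP^2$. (If one does not wish to invoke a Riemannian structure, one notes that a fixed-point homogeneous cohomogeneity-one action can always be made Riemannian by averaging a metric, so no loss of generality is incurred.) This gives exactly the statement of Corollary~\ref{C:Dim2}.

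The only real subtlety — and the step I would be most careful about — is the bookkeeping in the first paragraph: correctly ruling out $\dim M^* = 2$ and $\dim M^* = 0$ and confirming that the fixed-point-homogeneity hypothesis is compatible only with $\dim M^* = 1$ in dimension two, so that Corollary~\ref{C:cohom1_fph_actions} applies. Everything after that is a direct citation. I do not expect any genuine obstacle; the result is essentially a restatement of the $n=2$ instance of the preceding corollary, and the proof should be just two or three lines.
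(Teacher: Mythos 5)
Your proposal is correct and follows exactly the paper's route: observe that a $2$-dimensional fixed-point homogeneous manifold must have cohomogeneity one (with $\G=\Ss^1=\SO(2)$ the only group in the list \eqref{L:gps_trans_spheres} acting transitively on the normal $\sphere^1$), and then quote case $(a_2)$ of Corollary~\ref{C:cohom1_fph_actions}. The only wrinkle is your phrase that a $2$-dimensional orbit space ``would force $\G$ to act trivially''---it would only force $\G$ to be finite---but you repair this in the next paragraph by excluding the normal-$\sphere^0$ case via the paper's convention, which is precisely the implicit assumption the paper itself makes.
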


%%%%%%%%%%%%%%%%%%%%%%%%%%%%%%%%%%%%%%%%%%%%%%%%%%%%%%%%%%%%%%%%%%%%%%%%%%%%%%%%%%%%%%%%%%%%%%%%%%%%%%%%%%%%%%%%%
%%%%%%%%%%%%%%%%%%%%%%%%%%%%%%%%%%%%%%%%%%%%%%%%%%%%%%%%%%%%%%%%%%%%%%%%%%%%%%%%%%%%%%%%%%%%%%%%%%%%%%%%%%%%%%%%%

%%%%%%%%%%%%%%%%%%%%%%%%%%%%%%%%%%%%%%%%%%%%%%%%%%%%%%%%%%%%%%%%%%%%%%%%%%%%%%%%%%%%%%%%%%%%%%%%%%%%%%%%%%%%%%%%%
%%%%%%%%%%%%%%%%%%%%%%%%%%%%%%%%%%%%%%%%%%%%%%%%%%%%%%%%%%%%%%%%%%%%%%%%%%%%%%%%%%%%%%%%%%%%%%%%%%%%%%%%%%%%%%%%%

%%%%%%%%%%%%%%%%%%%%%%%%%%%%%%%%%%%%%%%%%%%%%%%%%%%%%%%%%%%%%%%%%%%%%%%%%%%%%%%%%%%%%%%%%%%%%%%%%%%%%%%%%%%%%%%%%
%%%%%%%%%%%%%%%%%%%%%%%%%%%%%%%%%%%%%%%%%%%%%%%%%%%%%%%%%%%%%%%%%%%%%%%%%%%%%%%%%%%%%%%%%%%%%%%%%%%%%%%%%%%%%%%%%

%%%%%%%%%%%%%%%%%%%%%%%%%%%%%%%%%%%%%%%%%%%%%%%%%%%%%%%%%%%%%%%%%%%%%%%%%%%%%%%%%%%%%%%%%%%%%%%%%%%%%%%%%%%%%%%%%
%%%%%%%%%%%%%%%%%%%%%%%%%%%%%%%%%%%%%%%%%%%%%%%%%%%%%%%%%%%%%%%%%%%%%%%%%%%%%%%%%%%%%%%%%%%%%%%%%%%%%%%%%%%%%%%%%

%%%					%%%
%%%	CH3. DIM 2 AND 3	%%%
%%%					%%%

\section[Nonnegatively curved FPH $3$-manifolds]{Nonnegatively curved fixed-point homogeneous $3$-manifolds}
\label{S:Dim3}

This section contains the proof of Theorem A. Observe that the orbit space of a fixed-point homogeneous action  on a nonnegatively curved $3$-manifold  is either one- or two-dimensional. In the latter case, we have a circle action and we will make use of the Orlik-Rayomond-Seifert classification of smooth circle actions on $3$-manifolds (cf. \cite{ORa,Ra,Se}). We will briefly recall this classification in the next subsection. We will then prove Theorem~A in Subsection 3.2.
 In Subsection~3.3 we provide examples of isometric actions realizing the orbit spaces appearing in the classification. 

%%%---------------------------------------------------------------------
%%%			DIMENSION 3		
%%%---------------------------------------------------------------------

%------------------------------------------------
% S1 ACTIONS ON 3-MANIFOLDS
%------------------------------------------------

\subsection{Circle actions on 3-manifolds} 
\label{S:circle_actions_on_3manifolds} 
A smooth $\Ss^1$-action on a closed 3-manifold $M$ is completely determined by a \emph{weighted} orbit space (cf. \cite{Or72,ORa}) 
\[
M^*=\{b;(\varepsilon,g,\bar{h},t),(\alpha_1,\beta_1),\ldots,(\alpha_n,\beta_n)\}
\]
which we now describe. The orbit space $M^*$ is a surface of genus $g$ with $0\leq \bar{h}+t$ boundary components. Of these boundary components, $\bar{h}$ correspond to fixed-point set components while $t$ correspond to special exceptional orbits. The symbol $\varepsilon$ takes on the value $o$, when $M^*$ is orientable, and $\bar{n}$ when $M^*$ is non-orientable. There are $n$ exceptional orbits and each one is assigned a pair of integers $(\alpha_i,\beta_i)$ called \emph{Seifert invariants}. These are pairs of relatively prime integers with the property that if $\varepsilon=o$, then $0<\beta_i<\alpha_i$ and if $\varepsilon=\bar{n}$, then $0<\beta_i<\alpha_i/2$. 
We will decribe the Seifert invariants in more detail in the next paragraph.
 If $\varepsilon=o$ and $\bar{h}+t=0$, we let $b$ be an arbitrary integer. If $\bar{h}+t\neq 0$, let $b=0$. If $\varepsilon=\overline{n}$, $\bar{h}+t=0$ and no $\alpha_i=2$, let $b$ take on the values $0$ or $1$, while $b=0$ otherwise. 

We will now describe the Seifert invariants $(\alpha_i,\beta_i)$ (cf. \cite{F1,Or72}). Following the notation in the transformation groups literature, given a set $A\subset M$, we will let $A^*$ denote the projection of $A$ under the orbit map $\pi:M\rightarrow M^*$, so $A^*=\pi(A)$. Let $E$ be the union of the exceptional orbits and suppose  $E^*=\{\, x_1^*,\ldots, x_n^*\,\}$.  For each $x_i^*\in E^*$, let $V_i^*$ be a closed $2$-disk neighborhood such that $V_i^*\cap V_j^* =\emptyset$ if $i\neq j$. For $x_i\in \pi^{-1}(x_i^*)$ there is a closed $2$-disk slice $S_i$ at $x_i$ such that $S_i^*=V^*_i$. We orient $S_i$ so that its intersection number with the oriented orbit $\pi^{-1}(x^*_i)$ is $+1$ in the solid torus $V_i$. This induces an orientation on $m_i$, the boundary of the slice $S_i$. Observe that $m_i$ is null-homotopic in $V_i$. Now let $h_i$ be an oriented principal orbit on $\partial V_i$. Since the action is principal on $\partial V_i$, it admits a cross-section $q_i$. If the isotropy group at $x_i$ is $\Int_{\alpha_i}$, the cross-section $q_i$ of the action on $\partial V_i$ is determined up to homology  by the homology relation $m_i\sim \alpha_i q_i+\beta_i h_i$, where $\alpha_i$ and $\beta_i$ are relatively prime and $0<\beta_i<\alpha_i$. The \emph{Seifert invariants} $(\alpha_i,\beta_i)$ determine $V_i$ up to orientation-preserving equivariant diffeomorphism. If we reverse the orientation of $V_i$, the Seifert invariants become $(\alpha_i,\alpha_i-\beta_i)$.

A fixed-point homogeneous $\Ss^1$-action on a closed $3$-manifold corresponds to having $\bar{h}>0$. The classification of these manifolds is due to Raymond \cite{Ra}.

% THM: FPH S1-ACTIONS ON 3-MANIFOLDS

\begin{thm}[Raymond]
\label{T:3D_FPH_S1_actions}
 Let 
\[
M=\{b;(\varepsilon,g,\bar{h},t),(\alpha_1,\beta_1),\ldots,(\alpha_n,\beta_n)\}
\]
and assume that $\bar{h}>0$, i.e., that $\mathsf{S}^1$ acts on $M$ with fixed points. Then $M$ is diffeomorphic to 
\\
\begin{itemize}
	\item[(1)] $\sphere^3\#(\sphere^2\times\sphere^1)_1\#\cdots\#(\sphere^2\times\sphere^1)_{2g+	\bar{h}-1}\# (\RP^2\times \sphere^1)_1\#\cdots\#(\RP^2\times\#^1)_{t}$ \linebreak$\# L(\alpha_1,\beta_1)\#\cdots\# 	L(\alpha_n,\beta_n)$ if $(\varepsilon, g, \bar{h},t)=(o,g,\bar{h},t)$, $t\geq 0$;
	\\
	\item[(2)] $(\sphere^2\times\sphere^1)_1\#\cdots\#(\sphere^2\times\sphere^1)_{g+\bar{h}-1}\#(\RP^2\times 	\sphere^1)_1\#\cdots\#(\RP^2\times\#^1)_{t}$
 	$\# L(\alpha_1,\beta_1)\linebreak\#\cdots\# L(\alpha_n,\beta_n)$ if $(\varepsilon, g, \bar{h},t)=(\bar{n},g,\bar{h},t)$, $t> 		0$;
	\\
	\item[(3)] $(\sphere^2\tilde{\times}\sphere^1)\#(\sphere^2\times\sphere^1)_1\#\cdots\#(\sphere^2\times	\sphere^1)_{g+\bar{h}-1}$ $\# L(\alpha_1,\beta_1)\#\linebreak \cdots\# L(\alpha_n,\beta_n)$ if $(\varepsilon, g, 	\bar{h},t)=(\bar{n},g,\bar{h},0)$.
\end{itemize}
\end{thm}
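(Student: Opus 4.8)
The plan is to reconstruct the diffeomorphism type of $M$ from its weighted orbit space by cutting $M$ into standard equivariant pieces along the orbit structure and then reassembling, following the surgery-theoretic analysis of smooth $\Ss^1$-actions on $3$-manifolds due to Orlik and Raymond. First I would recall the normal forms: the orbit space $M^*$ is a compact surface, the preimage of each interior singular point $x_i^*$ is an exceptional orbit with cyclic isotropy $\Int_{\alpha_i}$ and local model a Seifert solid torus, each of the $\bar h$ distinguished boundary circles is a circle fixed pointwise by $\Ss^1$, and each of the remaining $t$ boundary circles is a circle of special exceptional orbits, where the generator of the $\Int_2$ isotropy acts as a reflection on the normal slice. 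Choose pairwise disjoint closed disk neighborhoods $D_i$ of the $x_i^*$ and closed collar neighborhoods $A_j$ of the boundary circles, and set $\Sigma_0=M^*\setminus(\bigcup_i\mathrm{int}\,D_i\cup\bigcup_j\mathrm{int}\,A_j)$, a compact surface of genus $g$ with $\bar h+t+n$ boundary circles. This yields an equivariant splitting $M=M_0\cup(\bigcup_iV_i)\cup(\bigcup_jW_j)$ with $M_0=\pi^{-1}(\Sigma_0)$, $V_i=\pi^{-1}(D_i)$ and $W_j=\pi^{-1}(A_j)$.

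Next I would identify the pieces. The $\Ss^1$-action on $M_0$ is free, so $M_0\to\Sigma_0$ is a principal $\Ss^1$-bundle; since $\Sigma_0$ has nonempty boundary it is homotopy equivalent to a wedge of circles, so $H^2(\Sigma_0;\Int)=0$ and $M_0\cong\Sigma_0\times\Ss^1$. By the Slice Theorem each $V_i$ is a solid torus $\cldisc^2\times\Ss^1$ with orbit space $\cldisc^2$, glued to $\partial M_0$ so that its meridian $m_i$ satisfies the Seifert relation $m_i\sim\alpha_iq_i+\beta_ih_i$. Over a fixed-point boundary circle the Slice Theorem shows $W_j$ is again a solid torus $\cldisc^2\times\Ss^1$, but now with $\Ss^1$ rotating the $\cldisc^2$ factor and the fixed circle as core; its meridian, the boundary of a rotated disk, is isotopic to a principal orbit, i.e.\ to a fiber $h$. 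Over a special-orbit boundary circle $W_j$ is a twisted interval bundle whose underlying manifold is the orientable $\Ss^1$-bundle over a M\"obius band. (One checks directly, e.g.\ with $g=0,\bar h=1,t=n=0$, that $M=\sphere^3$, and with $g=0,\bar h=2,t=n=0$, that $M=\sphere^2\times\sphere^1$, matching the statement.)

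The reassembly is then a bookkeeping argument whose decisive hypothesis is $\bar h\geq 1$: at least one piece $W_j$ caps a boundary torus of $M_0$ along a fiber. Since $\partial M^*\neq\emptyset$ there is no Euler number obstruction (consistently with $b=0$ in the normalization), and the presence of a fiber-meridian filling lets one perform every other attachment as an equivariant connected sum along a free orbit, which on underlying manifolds is an ordinary connected sum of the corresponding closed $\Ss^1$-manifolds. Carrying this out: capping a disk cut from $\Sigma_0$ next to $x_i^*$ by a fiber converts the Seifert piece $V_i$ into the lens-space summand $L(\alpha_i,\beta_i)$ (with $\alpha_i=1$ contributing $\sphere^3$, i.e.\ nothing); each handle of $\Sigma_0$ contributes an $\sphere^2\times\sphere^1$; each of the remaining $\bar h-1$ fixed-point circles contributes an $\sphere^2\times\sphere^1$; each of the $t$ special-orbit circles contributes an $\RP^2\times\sphere^1$; and the ``base'' is $\sphere^3$ when $M^*$ is orientable. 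When $M^*$ is non-orientable, the non-orientability of $\Sigma_0$ is absorbed into an $\RP^2\times\sphere^1$ summand in the presence of a special orbit ($t>0$, case~(2)) and otherwise produces exactly one $\sphere^2\tilde{\times}\sphere^1$ summand (case~(3)). A count of handles, crosscaps and boundary circles, one circle having been consumed by the base, gives $2g+\bar h-1$ copies of $\sphere^2\times\sphere^1$ in the orientable case and $g+\bar h-1$ in the non-orientable case, reproducing exactly the lists (1), (2), (3).

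I expect the genuinely hard part to be this reassembly step: rigorously justifying that, once a fixed-point (fiber-meridian) filling is present, the Seifert pieces and handles split off as honest connected summands rather than interacting, and pinning down the exact curve identifications on each of the three types of boundary torus. This is precisely the content of the Orlik--Raymond normal-form analysis, so in practice I would reduce to their local models and verify the gluings case by case; the resulting $\pi_1$ (or $H_1$) computations are then routine and confirm the diffeomorphism types.
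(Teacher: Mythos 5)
The paper offers no proof of this statement: it is Raymond's classification theorem, quoted verbatim with a citation to \cite{Ra}, so there is no in-paper argument to compare against. Your sketch is, in outline, a faithful reconstruction of the Orlik--Raymond argument: split $M^*$ into a central surface $\Sigma_0$ whose preimage is the trivial bundle $\Sigma_0\times\Ss^1$, Seifert solid tori over the exceptional points, fiber-filled solid tori over the $\bar h$ fixed circles, and twisted pieces over the $t$ special exceptional circles, and then use $\bar h\geq 1$ to split off the summands. Your final counts ($2g+\bar h-1$, resp.\ $g+\bar h-1$, copies of $\sphere^2\times\sphere^1$, the $t$ copies of $\RP^2\times\sphere^1$, the lens spaces $L(\alpha_i,\beta_i)$, and the single $\sphere^2\tilde{\times}\sphere^1$ when $\varepsilon=\bar n$ and $t=0$) agree with the statement, provided ``handle'' is read as ``$1$-handle of $\Sigma_0$'' so that an orientable genus-$g$ base contributes $2g$ of them.

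The one step whose mechanism is wrong as stated is the phrase ``connected sum along a free orbit, which on underlying manifolds is an ordinary connected sum.'' Removing invariant tubular neighborhoods of free orbits and regluing is a sum along tori, not spheres, and does not induce a connected sum of the underlying $3$-manifolds. The separating $2$-spheres that actually drive the decomposition are the preimages of properly embedded arcs in $M^*$ whose interiors lie in the principal stratum and whose endpoints lie on the image of the fixed-point circles: over such an arc the fibers sweep out an annulus collapsed to a point at each end, giving an embedded $\sphere^2\subset M$. (The paper uses exactly this device in its proof of Theorem A to exhibit $\RP^3\#\RP^3$ from the isotropy triple $\Int_2\cdots\Id\cdots\Int_2$.) The hypothesis $\bar h>0$ is what guarantees such arcs exist and can be chosen to separate each exceptional point, each band of $\Sigma_0$, each extra fixed circle, and each special exceptional circle from the rest; cutting along the resulting spheres then produces the listed summands one at a time. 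With that substitution---together with the identification $(\sphere^2\tilde{\times}\sphere^1)\# N\cong(\sphere^2\times\sphere^1)\# N$ for non-orientable $N$, which is what ``absorbs'' the crosscaps in case (2) and leaves exactly one twisted summand in case (3)---your outline becomes Raymond's proof; as written, the splitting step is the acknowledged gap.
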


%---------------------------------
% 3D CLASSIFICATION
%---------------------------------

\subsection{Proof of Theorem~A}\label{Proof:Theorem_A}

The first assertion follows from the comments at the end of Subsection~\ref{S2:FPH_mfds}. 
%%------------------------------------------------------
%%			SO(3)  ACTION			
%%------------------------------------------------------
When {$(\mathsf{G}, \mathsf{H})=(\SO(3),\SO(2))$}  the conclusion in part (1) follows from Corollary \ref{C:cohom1_fph_actions} in Section 2.

%%--------------------------------------------
%%		S1 ACTION
%%--------------------------------------------

We now prove part (2). When {$(\mathsf{G},\mathsf{H})=(\mathsf{S^1},\mathsf{1})$} the orbit space $M^*$ is a nonnegatively curved $2$-dimensional Alexandrov space with non-empty boundary. Hence $M^*$ is homeomorphic to one of the spaces listed in Corollary \ref{C:2d_Alex_spaces_wbdry}.
Let $F^1\cong \sphere^1$ be a component of the fixed-point set with maximal dimension. Let $C$ be the set at maximal distance from $F^1$  in $M^*$. 
By construction, $\dim C\leq\dim F^1=1$. 
When $\dim C = 0$,   $M^3$ is equivariantly diffeomorphic to $\sphere^3$ or to a lens space (cf. \cite{GS}). When $\dim C=1$, we have $C^1\simeq\sphere^1$ or $C^1\simeq [-1,+1]$.

 Suppose first $C\simeq [-1,+1]$. After another step of the soul construction, we obtain the soul, which must be a point. Then $M^*\simeq \cldisc^2$.
There cannot be points in $C^1$ with $\Ss^1$ isotropy, since the fixed-point set components of a circle action on a $3$-manifold must be circles. Hence the largest isotropy group in the isotropy triple $\mathsf{K}_{-}\cdots \mathsf{K}_{0}\cdots \mathsf{K}_{+}$ of $C^1\simeq [-1,+1]$ is either $\Id$ or  $\Int_q$, for some $q\geq 2$. 
The case where the largest isotropy group is $\Id$ reduces to the case when $C$ is a point with trivial isotropy. By Theorem \ref{T:3D_FPH_S1_actions}, $M^3$ is diffeomorphic to $\sphere^3$. Moreover, it follows from Theorem 1 in \cite{Ra} that $M^3$ must be equivariantly diffeomorphic to $\sphere^3$.

%%%%%%%%%%%%

Suppose the largest isotropy group is $\Int_k$, for some $k\geq 2$, so that we have the isotropy triple
$
\Int_{q_{-}}\cdots\Int_{l}\cdots\Int_{q_{+}}.
$
Since the space of directions at a point in $(-1,+1)$ has diameter
$\pi$, it follows that $\Int_l=\Id$. To determine $\Int_{q_{\pm}}$,  let $\gamma$ be a minimal
geodesic from $\partial M^*=\sphere^1$ to $+1\in [-1,+1]\simeq C^1$.
Since $C^1$ is totally convex and $\gamma$ is orthogonal to $C^1$, the space of directions at $+1$ must have diameter at least $\pi/2$.
Hence $\Int_{p_{+}}=\Int_{2}$ or $\Id$. 
Similarly, $\Int_{p_{-}}=\Int_{2}$ or $\Id$.
 Since we have assumed that at least one isotropy group is non-trivial, we have the isotropy triples
\[
	\Id\cdots\Id\cdots\Int_{2}\quad\text{and}\quad  	\Int_{2}\cdots\Id\cdots\Int_{2}.
\]
By Remark~\ref{rem:Retraction}, the first case reduces to the case in which $C$ is a point with isotropy $\Int_2$, so $M$ is diffeomorphic to $\RP^3$. It follows from \cite{Ra} that, up to equivariant diffeomorphism, there is only one action on $\RP^3$ with orbit space a $2$-disk whose boundary is the fixed-point set and a point with $\Int_2$-isotropy in the interior.

In the case of the isotropy triple $\Int_2\cdots\Id\cdots\Int_2$,
 the orbit space $M^*$ is a $2$-disk; its boundary circle is the fixed-point set, and in the interior of the $2$-disk there are two points with $\Int_2$-isotropy. According to Theorem~\ref{T:3D_FPH_S1_actions}, it follows from this orbit space structure that $M^3$ is diffeomorphic to $\mathbb{RP}^3\#\mathbb{RP}^3$. 
We may also read this off the orbit space structure in the following way. 
Divide $M^*$ by a curve $\gamma$ joining different points in the boundary circle so that the two points with $\Int_2$-isotropy lie in different halves of $M^*$. Now observe that $\gamma$ lifts to $\sphere^2$ in $M^3$ and each half of $M^*$ corresponds to $\mathrm{cl}(\RP^3-\mathbb{B}^3)$. Thus $M$ consists of two copies of $\mathrm{cl}(\RP^3-\mathbb{B}^3)$ identified on the boundary sphere. This corresponds to $\RP^3\#\RP^3$. 
Note that $\pi^{-1}(C^1)\cong \RP^2\#\RP^2 \cong \klein^2 \subset M^3$. We can write  $M^3$ as the union of tubular neighborhoods
$D(\sphere^1)$ and $D(\klein^2)$ identified by their common boundary $E^2$, which is an $\sphere^1$-bundle over $\sphere^1$. Since $M$ is orientable we must have that $E^2$ is  $\torus^2$.

According to \cite{Ra}, Theorem 4, there are $4^2=16$ inequivalent actions on $\RP^3\#\RP^3$. We now show that only one of these can occur as an isometric action on a nonnegatively curved $\RP^3\#\RP^3$. Recall that $\RP^3\# \RP^3$ with nonnnegative sectional curvature has $\sphere^2\times\sphere^1$ as a double cover (cf. \cite{Ha}). This in turn has as universal covering space  $\sphere^2\times\mathbb{R}$ with nonnegative curvature. By the Splitting Theorem~\ref{T:splitting_thm}, $\sphere^2\times\sphere^1$  must have a product metric with nonnegative curvature. 
There is only one $\mathsf{S}^1$ action on $\sphere^2\times\sphere^1$ according to \cite{Ra} Theorem~1~(iii). So there is only one $\mathsf{S}^1$-action on $\RP^3\#\RP^3$ by isometries, induced by the action on $\sphere^2\times\sphere^1$. This action is described in Example \ref{dim3:ex_1}.

Suppose $C^1\simeq \sphere^1$. Then $C^1$ is the soul of $M^*$
and all the points in $C^1$ must have the same isotropy. 
Suppose $C^1$ has trivial isotropy. Then $F^1$ double-covers $C^1$ and the orbit space is a M\"obius band $\MM^2$ whose boundary circle is $F^1$.  The orbit space has weights $(\varepsilon, g, \bar{h},t)=(\bar{n},1,1,0)$, and it follows from Theorem~\ref{T:3D_FPH_S1_actions} that $M$ is diffeomorphic to $\sphere^2\tilde{\times}\sphere^1$, the non-trivial $\sphere^2$-bundle over $\sphere^1$. It follows from Theorem 1(iii) in \cite{Ra} that there is only one circle action with fixed points on this manifold. This action is described in Example~\ref{dim3:ex_2}.

Suppose $C^1$ has finite isotropy $\Int_q$. By Lemma~\ref{L:orbit_structure_lemma_B}, we must have $\Int_2$ isotropy and $C^1$ must be a boundary component. In this case the set of special exceptional orbits is $C^1$. 
We have $(\varepsilon, g, \bar{h},t)=(o,0,1,1)$, so $M^3$ is equivariantly diffeomorphic to $\RP^2\times\sphere^1$, according to Theorem~1 in \cite{Ra}. By Theorem~1(iii) in \cite{Ra}, $\RP^2\times\sphere^1$ supports only one circle action with fixed points, up to equivariant diffeomorphism. This action is described in Example~\ref{dim3:ex_3}.

Suppose $C^1$ has isotropy $\Ss^1$. In this case the orbit space is a cylinder whose boundary components correspond to components of the fixed-point set. There are no exceptional orbits. We have $(\varepsilon, g, \bar{h},t)=(o,0,2,0)$ and, by Theorem~1 in \cite{Ra}, $M^3$ is equivariantly diffeomorphic  to $\sphere^2\times\sphere^1$. Moreover, by Theorem~1(iii) in \cite{Ra}, $\sphere^2\times\sphere^1$ supports only one circle action with fixed points, up to equivariant diffeomorphism. This action is described in Example~\ref{dim3:ex_4}.
\qed

%%--------------------------------------------
%%		EXAMPLES
%%--------------------------------------------

\subsection{Examples of isometric fixed-point homogeneous actions on nonnegatively curved $3$-manifolds}

\begin{example}\label{dim3:ex_1}We describe a general construction for $\SO(n-1)$-actions on $\RP^n\#\RP^n$ with orbit space $\cldisc^2$ such that  the boundary circle of $\cldisc^2$ is a fixed-point set component and with two isolated points with $\Int_2$ isotropy in the interior. The action of $\Ss^1\cong\SO(2)$ we want on $\RP^3\#\RP^3$ will then be a particular case of this construction. Observe first that $\RP^n\#\RP^n$ is the quotient of $\sphere^{n-1}\times\sphere^1$ by the $\Int_2$-action given by $-1(x,z)\mapsto (Ax,\bar{z})$, where $A:\sphere^{n-1}\rightarrow\sphere^{n-1}$ is the antipodal map and $z\mapsto \bar{z}$ is complex conjugation when we consider $\sphere^1\subset \mathbb{C}$. Now, consider the $\SO(n-1)$ action on $\sphere^{n-1}\times\sphere^1$ given by letting $\SO(n-1)$ act with cohomogeneity one on $\sphere^{n-1}$ and trivially on $\sphere^1$. Since rotations commute with the antipodal map, this action induces an $\SO(n-1)$-action on the quotient $\RP^{n}\#\RP^{n}$ giving the desired orbit space. Observe also that this induces a $\Int_2$-action on the orbit space of $\sphere^{n-1}\times\sphere^1$, which is a cylinder whose boundary circles are fixed-point components. The quotient of this $\Int_2$-action yields the orbit space of the $\SO(n-1)$-action on  $\RP^n\#\RP^n$, i.e., we have a commutative diagram

\[
	\begin{CD}
	\sphere^{n-1}\times\sphere^1 @>\kappa>> \RP^n\#\RP^n	\\
	@V\pi VV							@V\pi VV 		\\
	\sphere^1 \times \mathbb{I}		@>\kappa >> \cldisc^2
	\end{CD},
\] 
where $\pi$ is the orbit projection map of the $\SO(n-1)$-action and $\kappa$ is the quotient map under the $\Int_2$ covering action.

\end{example}

\begin{example}\label{dim3:ex_2} Let $\Ss^1$ act isometrically on the non-trivial bundle $\sphere^2\tilde{\times}\sphere^1$ with nonnegative curvature by letting $\Ss^1$ act fiberwise with cohomogeneity one. We obtain this action by first considering $\sphere^2\times [0,1]$ with $\Ss^1$ acting by rotations on the first factor and then identifying $\sphere^2\times\{0\}$ with $\sphere^2\times\{1\}$ via the antipodal map, which is an equivariant isometry. The orbit space of the action is  a M\"obius band whose boundary circle is a fixed-point set component
\end{example}

\begin{example}\label{dim3:ex_3}

Let $\Ss^1$ act isometrically on $\RP^2\times\sphere^1$ with nonegative curvature by letting $\Ss^1$ act via the standard cohomogeneity one action on the $\RP^2$ factor and tivially on the $\sphere^1$ factor. The orbit space of this action is a cylinder. One boundary component of the orbit space is a fixed-point set component, while the points in the other boundary component have $\Int_2$ isotropy.
\end{example}

\begin{example}\label{dim3:ex_4}

Let $\Ss^1$ act isometrically on $\sphere^2\times\sphere^1$ with the standard nonnegatively curved product metric by letting $\mathsf{S}^1$ act on the $\sphere^2$ factor via the standard cohomogeneity $1$ action and trivially on the $\sphere^1$-factor.  The orbit space is a cylinder 
whose boundary components correspond to components of the fixed-point set.
\end{example}

%%%%%%%%%%%%%%%%%%%%%%%%%%%%%%%%%%%%%%%%%%%%%%%%%%%%%%%%%%%%%%%%%%%%%%%%%%%%%%%%%%%%%%%%%%%%%%%%%%%%%%%%%%%%%%%%%
%%%%%%%%%%%%%%%%%%%%%%%%%%%%%%%%%%%%%%%%%%%%%%%%%%%%%%%%%%%%%%%%%%%%%%%%%%%%%%%%%%%%%%%%%%%%%%%%%%%%%%%%%%%%%%%%%

%%%%%%%%%%%%%%%%%%%%%%%%%%%%%%%%%%%%%%%%%%%%%%%%%%%%%%%%%%%%%%%%%%%%%%%%%%%%%%%%%%%%%%%%%%%%%%%%%%%%%%%%%%%%%%%%%
%%%%%%%%%%%%%%%%%%%%%%%%%%%%%%%%%%%%%%%%%%%%%%%%%%%%%%%%%%%%%%%%%%%%%%%%%%%%%%%%%%%%%%%%%%%%%%%%%%%%%%%%%%%%%%%%%

%%%%%%%%%%%%%%%%%%%%%%%%%%%%%%%%%%%%%%%%%%%%%%%%%%%%%%%%%%%%%%%%%%%%%%%%%%%%%%%%%%%%%%%%%%%%%%%%%%%%%%%%%%%%%%%%%
%%%%%%%%%%%%%%%%%%%%%%%%%%%%%%%%%%%%%%%%%%%%%%%%%%%%%%%%%%%%%%%%%%%%%%%%%%%%%%%%%%%%%%%%%%%%%%%%%%%%%%%%%%%%%%%%%

%%%%%%%%%%%%%%%%%%%%%%%%%%%%%%%%%%%%%%%%%%%%%%%%%%%%%%%%%%%%%%%%%%%%%%%%%%%%%%%%%%%%%%%%%%%%%%%%%%%%%%%%%%%%%%%%%
%%%%%%%%%%%%%%%%%%%%%%%%%%%%%%%%%%%%%%%%%%%%%%%%%%%%%%%%%%%%%%%%%%%%%%%%%%%%%%%%%%%%%%%%%%%%%%%%%%%%%%%%%%%%%%%%%

%%%						%%%
%%%						%%%
%%%		 DIMENSION 4		%%%
%%%						%%%
\section[Nonnegatively curved FPH $4$-manifolds]{Nonnegatively curved fixed-point homogeneous $4$-manifolds}
\label{section:dimension_4}

In this section we prove Theorem~B. To do so, we will determine the possible orbit spaces of a fixed-pont homogeneous circle action on a nonnegatively curved $4$-manifold $M^4$. In Subsection~\ref{d4:subsection:Examples}  we give examples of isometric actions on $4$-manifolds with nonnegative curvature realizing some of these orbit spaces. In Section~\ref{Section:d4_orbit_spaces}  we will further discuss fixed-point homogeneous circle actions  when $M^4$ is simply-connected.

% PROOF
\subsection{Proof of Theorem B}

The first assertion follows from the comments made at the end of subsection 2.1. Parts (1) and (2) are cohomogeneity one cases and the conclusions follow from Corollary~\ref{C:cohom1_fph_actions} so we need only focus on parts (3) and (4), which we will prove separately.
\\

%%%			%%%
%%%	SO(3)	%%%
%%%			%%%

\noindent \emph{Proof of assertion} (3).
Suppose $(\mathsf{G},\mathsf{H})=(\SO(3),\SO(2))$. 
The orbit space $M^*$ is $2$-dimensional, and hence is homeomorphic to one of the spaces listed in Corollary~\ref{C:2d_Alex_spaces_wbdry}. Observe that  the fixed-point set components are circles. Let $F^1\subset M^\G$ be a component of $\partial M^*$ and let $C$ be the set of points at maximal distance from $F^1$ in the orbit space $M^*$. We have $\dim C\leq \dim F^1=1$. When $\dim C=0$, it follows from \cite{GS} that $M^4$ is equivariantly diffeomorphic to $\sphere^4$ or $\RP^4$, depending on whether the isotropy of $C^0$ is, respectively, $\SO(2)$ or $\mathsf{O}(2)$. 

Suppose $\dim C =1$. We have $C^1 \simeq [-1,+1]$ or $C^1\simeq \sphere^1$.  When $C^1\simeq [-1,+1]$, we have $M^*\simeq \cldisc^2$. Proceeding as in the proof of (2) of Theorem~A (cf. Section~\ref{Proof:Theorem_A}), we see that the only possible isotropy triples $\mathsf{K}_{-}\cdots
\mathsf{K}_0\cdots\mathsf{ K}_{+}$ are
\begin{align*}
 & \SO(2)		\cdots \SO(2)		\cdots	\SO(2),		\\[5pt]%\label{case:SO(2)-SO(2)-SO(2)}\\[5pt]
 & \mathsf{O}(2)\cdots  \SO(2)		\cdots \SO(2), 			\\[5pt]%\label{case:O(2)-SO(2)-SO(2)}\\[5pt]
 & \mathsf{O}(2)\cdots    \SO(2)\cdots \mathsf{O}(2). 			%\label{case:O(2)-SO(2)-O(2)}
\end{align*} 
The first two cases reduce to the case in which $C$ is a point and $M$ is diffeomorphic, respectively, to $\sphere^4$ and $\RP^4$. In the third case,  $M^4$ can be exhibited as the connected sum of two copies of $\RP^4$. The lift of $C^1\simeq [-1,+1] $ under the projection map $\pi:M\rightarrow M^*$ is $\pi^{-1}([-1,+1])\cong \RP^3\#\RP^3$ so that $M^4$ decomposes as the union of a $3$-disk bundle over $\sphere^1$ and a $1$-disk bundle over $\RP^3\#\RP^3$.  It follows from Example~\ref{dim3:ex_1}, in Section~\ref{S:Dim3}, that this orbit space can be realized by an $\SO(3)$-action on $\RP^4\#\RP^4$, induced from an isometric $\SO(3)$-action on $\sphere^3\times\sphere^1$ .

Suppose $C^1\simeq \sphere^1$. In this case $M^*$ is isometric to a flat cylinder $\sphere^1\times \mathbb{I}$ or to a flat M\"obius band whose boundary is the fixed-point set $F^1$. 

When $M^*$ is a cylinder $\sphere^1\times \mathbb{I}$,  one of the boundary components corresponds to the fixed-point set component $F^1$. The other boundary component, corresponding to $C^1$,  is either another component of the fixed-point set or it has isotropy $\mathsf{O}(2)$. When the boundary is a fixed-point set component, the manifold is an $\sphere^3$-bundle over $\sphere^1$. We can realize this orbit space structure on $\sphere^3\times\sphere^1$ with nonnegative curvature by letting $\SO(3)$ act by cohomogeneity one on the $\sphere^3$-factor and trivially on the $\sphere^1$-factor.   When the boundary has $\mathsf{O}(2)$-isotropy, 
the lift of a geodesic joining two points in different boundary components of $\sphere^1\times\mathbb{I}$ is $\RP^3$. Hence $M^4$ is an $\RP^3$-bundle over the fixed-point set component $\sphere^1$
and $\pi_1(M^4)=\Int_2\times\Int$. By the Splitting Theorem, $M^4$ is covered by $\sphere^3\times\RR$. In fact, we can realize this orbit space structure on $\RP^3\times\sphere^1$ with nonnegative curvature by letting $\SO(3)$ act by cohomogeneity one on the $\RP^3$-factor and trivially on the $\sphere^1$-factor.

Finally, when $M^*$ is a M\"obius band, $M^4$ is an $\sphere^3$-bundle over $C=\sphere^1$. We can realize this orbit space structure on the non-trivial bundle $\sphere^3\tilde{\times}\sphere^1$ with nonnegative curvature by letting $\SO(3)$ act by cohomogeneity one on the $\sphere^3$-fibers.
\\

%%%				%%%
%%%		S1		%%%
%%% 				%%%

\noindent \emph{Proof of assertion} (4). Suppose $(\mathsf{G},\mathsf{H})=(\mathsf{S}^1,\mathsf{1})$. Let $F^2\subset \partial M^*$ be a component of the fixed-point set with maximal dimension.
We have $\dim C\leq \dim F^2 =2$. When $\dim C=0$, $M^4$ is equivariantly diffeomorphic to $\CP^2$ when $C^0$ is a fixed point, to $\sphere^4$ when $C^0$ has trivial isotropy, or to $\RP^4$ when $C^0$ has $\Int_2$-isotropy (cf. \cite{GS}). Observe that $C^0$ cannot have isotropy group $\Int_q$, with $q\geq 3$, since the set of points with finite isotropy group $\Int_q$, $q\geq 3$, must have even codimension in $M^4$.

%----------------
% DIM C =1 
%----------------

Suppose now that  $\dim C=1$, so that $C^1 \simeq \sphere^1$ or $C^1\simeq[-1,+1]$. We will analyze each case separately. 

%----------------
%  C = S^1
%----------------

Let $C^1\simeq\sphere^1$. Here $C^1$
is the soul of $M^*$ and by the Isotropy Lemma~\ref{L:isotropy_lemma} all the points in $C^1$ must have the same isotropy group. It is well-known that the fixed-point set components of an $\Ss^1$-action have even codimension in $M^4$, so there cannot be isotropy $\Ss^1$ in $C^1$. Hence the largest isotropy group in $C^1\simeq \sphere^1$ is either $\Int_q$, $q\geq 2$, or the trivial subgroup $\Id$.

 Suppose the largest
isotropy group is $\Int_q$, for some $q\geq 2$, so that all the points in $C^1\simeq\sphere^1$ have isotropy $\Int_q$. Observe that there are no critical points for the distance function to $F$ in $M^*-\{F\cup C^1\}$ and we have a gradient-like vector field from $F$ to the soul circle $C^1$ which is radial near $F$ and near $C^1$ (cf. \cite{GS_symrank}). Given a point $p^*$ in $C^1$, the set of flow-lines from $p^*$ to $F$ is a $2$-disk whose lift is a lens space $L(q,q')$.  Hence $M^4$ is a lens space-bundle over $\sphere^1$ and $\pi_1(M^4)\cong \Int_q\times\Int$. By the Splitting Theorem~\ref{T:splitting_thm}, $M^4$ is covered by $\sphere^3\times\RR$.  

The fixed-point set $F^2$ is  diffeomorphic to the boundary of a tubular neighborhood of $C^1$, so it is an $\sphere^1$-bundle over $C^1\simeq \sphere^1$ and hence either a torus $\torus^2$ or a Klein bottle $\klein^2$.  Actions realizing these orbit spaces are described in Examples~\ref{d4:S1_ex_A} and \ref{d4:S1_ex_B}

When the largest isotropy is the principal isotropy group $\Id$, $M^4$ is an $\sphere^3$-bundle over  $\sphere^1$ and $\pi_1(M^4)\cong \Int$. By the Splitting Theorem~\ref{T:splitting_thm}, $M^4$ must be covered by $\sphere^3\times\RR$ equipped with a product metric of nonnegative curvature. The fixed-point set $F^2$ is diffeomorphic to the boundary of a tubular neighborhood $D(C^1)$ in the orbit space $M^*$. Hence $F^2$ must be $\torus^2$ or $\mathbb{K}^2$. The lift  $\pi^{-1}(C^1)$ of $C^1\simeq \sphere^1$ is either $\torus^2$ or $\klein^2$. Recall that $M$ decomposes as the union of $2$-disk bundles 
over $F^2$ and $\pi^{-1}(C^1)$ with common boundary $E^3$. When $M$ is orientable, we must have $F^1\cong \torus^2$, since the fixed-point set component of a smooth $\Ss^1$-action on an orientable manifold is an orientable manifold. It is not difficult to see that, in this case, $\pi^{-1}(C^1)$ must be $\torus^2$. This orbit space structure can be realized by an isometric $\Ss^1$-action on $\sphere^3\times \sphere^1$, as in Example~\ref{d4:S1_ex_C}. 
When $M$ is not orientable, we have $F^2\cong \klein^2$ and it follows that $\pi^{-1}(C^1)$ is $\klein^2$. This orbit space structure can be realized on the non-trivial bundle $\sphere^3\tilde{\times}\sphere^1$, as described in Example~\ref{d4:S1_ex_D}.

%----------------
% C=[-1,+1]
%----------------

Let now $C^1\simeq [-1,+1]$. We first analyze the orbits corresponding to the points in $C^1$.  Suppose the largest isotropy group is $\Id$. This case reduces to the case when $C$ is a point with trivial isotropy and it follows that $M^4$ is diffeomorphic to $\sphere^4$.

Suppose the largest isotropy group is $\Int_q$, for some $q\geq 2$. Proceeding as in the proof of (2) in Theorem~A (cf. Section~\ref{Proof:Theorem_A}), the only possible isotropy triples are 
$\Id\cdots  \Id\cdots \Int_2$  and $\Int_2\cdots  \mathsf{1}\cdots \Int_2$.
The first case reduces to the case when  $C$ is a point with $\Int_2$-isotropy. In this case $M$ is diffeomorphic to $\RP^4$. In the case of the isotropy triple $\Int_2\cdots  \mathsf{1}\cdots \Int_2$, the lift of $C^1$ under the orbit map $\pi:M\rightarrow M^*$ is  $\pi^{-1}([-1,+1])\simeq \RP^2\#\RP^2$.  Since the space of directions at $\pm 1\in C^1$ is $\RP^2$, the boundary of  a tubular neighborhood of $\pm 1$ in $M^*$ is $\RP^2$ and the boundary of a tubular neighborhood of $C^1$ in $M^*$ of $C^1$ is $\RP^2\#\RP^2$. Hence $F^2\cong \RP^2\#\RP^2\cong \klein^2$ and it follows that $M^4$ is non-orientable. Observe that $M^4$ can be written as the union of tubular neighborhoods of $\RP^2\#\RP^2$ and $\RP^2\#\RP^2$ along their common boundary $E^3$. We consider now the orientable double cover $\tilde{M}$ of $M$. The fixed-point set $\tilde{F}^2$ of the lifted isometric circle action  double-covers $F^2\cong\klein^2$ and is orientable, so $\tilde{F}^2\cong \torus^2$. The lift of the set at maximal distance is a circle $\sphere^1$ with no isotropy. This orbit space configuration has been analyzed already 
and it follows that $M$ is covered by $\sphere^3\times\RR$. An isometric $\Ss^1$-action on $\RP^4\#\RP^4$ with this orbit space structure is described in Example~\ref{d4:S1_ex_E}.

Suppose the largest isotropy group is $\mathsf{S}^1$. The possible isotropy configurations are:
\begin{align}
	&  \Ss^1	\cdots	\Id 		\cdots 	\Id, \label{S1:S1-1-1}\\
	&  \Ss^1	\cdots 	\Id 		\cdots 	\Ss^1, \label{S1:S1-1-S1}\\
	&  \Ss^1	\cdots 	\Id 		\cdots 	\Int_{2}, \label{S1:S1-Zl-Zq}\\
	&  \Ss^1	\cdots 	\Int_{l} 	\cdots 	\Ss^1,\text{ for some $l\geq 2$.} \label{S1:S1-Zl-S1}
\end{align}

Case~\ref{S1:S1-1-1} reduces to the case where $C$ is a point with $\Ss^1$-isotropy, so $M^4$ is diffeomorphic to $\CP^2$. In case~\ref{S1:S1-1-S1}, the boundary of a neighborhood of $C^1\simeq [-1,+1]$ in $M^*$ is $\sphere^2$. Hence the $2$-dimensional fixed-point set component $F^2$ is diffeomorphic to $\sphere^2$. Moreover, the lift $\pi^{-1}(C^1)$ is also $\sphere^2$, so we can write $M^4$ as the union of two $2$-disk bundles over $\sphere^2$. Hence $M^4$ is simply-connected and, by a well-known theorem of Kobayashi, $\chi(M^4)=\chi(\mathrm{Fix}(M^4,\mathsf{S}^1))$=4. It follows from Theorem~\ref{T:D4:S1:KSY-classification} in Section~\ref{Section:d4_orbit_spaces} that $M^4$ is diffeomorphic to $\sphere^2\times\sphere^2$ or $\CP^2\#\pm\CP^2$. We will see in Section~\ref{Section:d4_orbit_spaces} that $\CP^2\#\CP^2$ and $\CP^2\#-\CP^2$ are the only simply-connected $4$-manifolds that support smooth circle actions with this orbit space structure. 

In case~\ref{S1:S1-Zl-Zq}, the boundary of a neighborhood of $C^1$ is $\RP^2$ and the lift of $C^1$ in $M^4$ is also $\RP^2$. Hence $F^2\cong \RP^2$, so $M$ is non-orientable and can be written as the union of two $2$-disk bundles over $\RP^2$ glued along their common boundary $E^3$. Let $\tilde{M}$ be the orientable double-cover of $M$ with the lifted isometric circle action. Then the fixed-point set $\Fix(\tilde{M},\Ss^1)$ of the lifted action double-covers the fixed-point set of the $\Ss^1$-action on $M$ and we must have that $\Fix(\tilde{M},\Ss^1)$ consists of a $2$-sphere and two isolated fixed-points. Hence $M^4$ must be double-covered by $\CP^2\#\CP^2$ or $\CP^2\#-\CP^2$ (cf. Case~\ref{S1:S1-1-S1}).

 In case~\ref{S1:S1-Zl-S1}, the boundary of a neighborhood of the interval $C^1$ is $\sphere^2$. Hence the fixed-point set $F^2$ is diffeomorphic to $\sphere^2$. Moreover, the lift of $C^1$ is a manifold, since it is a component of the fixed-point set of $\Int_l$, and corresponds to $\sphere^2$.  As in case~\ref{S1:S1-1-S1}, $M^4$ is diffeomorphic to either $\sphere^2\times\sphere^2$ or $\CP^2\#\pm\CP^2$. Smooth actions with this orbit space structure can be realized on $\sphere^2\times\sphere^2$ and $\CP^2\#\pm\CP^2$ (cf. Section~\ref{Section:d4_orbit_spaces}).  

%----------------
% DIM C =2. 
%----------------

Suppose $\dim C=2$.  
We consider two cases: $C\subset \partial M^*$ and $\partial M^*=F$.

% CASE: C IS A BOUNDARY COMPONENT OF M*.

 Suppose $C^2\subset \partial M^*$. By Lemma~\ref{L:orbit_structure_lemma_A}, $C^2$ is a fixed-point component or all the points in $C^2$ have isotropy $\Int_2$. In both cases $C^2$ is a closed smooth $2$-manifold with nonnegative curvature, $F^2$ and $C^2$ are isometric and $M^*$ is isometric to $F^2\times \mathbb{I}$.  Since $F^2= C^2$ is a closed, nonnegatively curved $2$-manifold, it is diffeomorphic to $\sphere^2$, $\RP^2$, $\torus^2$ or $\klein^2$.

Suppose that $C^2$ is a component of the fixed-point set. By the Double Soul Theorem \ref{T:Double_Soul_Theorem}, $M^4$ is an $\sphere^2$-bundle over $C^2=F^2$,  and $\pi_1(M^4)\cong\pi_1(F^2)$. When $F^2=\sphere^2$, $M^4$ is an $\sphere^2$-bundle over $\sphere^2$ and it follows that $M^4$ is diffeomorphic to $\sphere^2\times\sphere^2$ or $\sphere^2\tilde{\times}\sphere^2\cong\CP^2\#-\CP^2$. Both manifolds support isometric $\Ss^1$-actions with fixed-point set $\sphere^2\cup\sphere^2$ (cf. Example~\ref{d4:S1_ex_F}). 
When $F^2$ is not $\sphere^2$, $M^4$ is not simply-connected.  Let $\tilde{M}^4$ be the universal covering space of $M^4$. Then we have
\[
\tilde{M}^4=
\begin{cases}
	\CP^2\#-\CP^2\text{ or }\sphere^2\times\sphere^2 &\text{ if } F^2=\RP^2;\\
	\sphere^2\times\RR^2 & \text{ if } F^2=\torus^2\text{ or $\klein^2$}.\\
\end{cases}
\]
We can construct examples realizing the orbit space structure $M^*=F^2\times \mathbb{I}$ with the two boundary components corresponding to fixed-point set components by letting $\Ss^1$ act on the product $F^2\times\sphere^2$ by cohomogeneity one on $\sphere^2$ and trivially on $F^2$.

Suppose now that all the points in $C^2$ have isotropy $\Int_2$. Observe that a geodesic from $F^2$ to $C^2$ lifts to $\RP^2$, so $M^4$ is an $\RP^2$-bundle over $F^2$. 
We can construct examples of actions on nonnegatively curved $4$-manifolds with this orbit space structure by considering the product $F^2\times\RP^2$ with $\Ss^1$ acting by cohomogeneity one on $\RP^2$ and trivially on $F^2$.

% CASE: C IS NOT A BOUNDARY COMPONENT OF M*.

Suppose now that $\partial M^*=F^2$, so $C^2$ is not a boundary component of $M^*$. We consider two cases, depending on whether or not $C^2$ has boundary. 

Suppose $\partial C^2=\emptyset$. By Lemma~\ref{L:orbit_structure_lemma_B} all the points in $C^2$ have principal isotropy and $F^2$ double-covers $C^2$. Moreover, $M^4$ is an $\sphere^2$-bundle over $C^2$. The only possibilities for $F^2$ are  $\sphere^2$,
$\torus^2$ or $\klein^2$. When $F^2\cong C^2\cong\torus^2$, we construct an example realizing this orbit space structure by considering $(\sphere^2\tilde{\times}\sphere^1)\times\sphere^1$ with $\Ss^1$ acting fixed-point homogeneously on $\sphere^2\tilde{\times}\sphere^1$ and trivially on $\sphere^1$. The orbit space 
is the product of a M\"{o}bius band and $\sphere^1$. This has boundary $\torus^2$, which corresponds to the fixed-point set, and set at maximal distance $\torus^2$.

% CASE: C2 WITH NON-EMPTY BOUNDARY

Suppose $\partial C\neq \emptyset$. Observe that $C$ is a $2$-dimensional Alexandrov space with nonnegative curvature, hence it must be homeomorphic to $\cldisc^2$ or isometric to a flat M\"obius band $\mathbb{M}^2$ or a flat cylinder $\sphere^1\times \mathbb{I}$. By Lemma \ref{L:orbit_structure_lemma_B} there is no isotropy in the interior of $C^2$. A space of directions argument shows that there cannot be an isolated fixed-point in $\partial C^2$, so the only non-trivial isotropy is $\Int_2$.

% CASE: C2 is a disk.

Assume $C^2=\cldisc^2$. Let us assume first that the largest isotropy group is $\Id$. Then the soul is a point with trivial isotropy and this case reduces to the case in which $C$ is a point with trivial isotropy. In this case $F^2$ is $\sphere^2$ and $M^4$ is diffeomorphic to $\sphere^4$. 

Suppose now that every point in the boundary circle has isotropy $\Int_2$. It follows from the Orlik-Raymond classification of $3$-manifolds with a smooth $\Ss^1$-action that the lift of $C^2$ is $\sphere^2\tilde{\times}\sphere^1$, the non-trivial $\sphere^2$-bundle over $\sphere^1$. Then we can write $M^4$ as the union of disk bundles over $\sphere^2$ and $\sphere^2\tilde{\times}\sphere^1$ glued along their common boundary $E^3$. 
We must have $E^3\simeq \sphere^2\times\sphere^1$, so $\pi_1(M^4)\cong \Int_2$. Hence  $M^4$ is double-covered by $\sphere^4$, $\CP^2$, $\sphere^2\times\sphere^2$ or $\CP^2\#\pm\CP^2$. Since $\Fix(M^4,\Ss^1)=\sphere^2$, $M^4$ must be orientable. Since $\chi(M^4)=\chi(\Fix(M^4,\Ss^1))=2$,  $M^4$ is a quotient of $\sphere^2\times\sphere^2$ (cf. \cite{Hi}, Ch. IX, Lemma 3).

% ISOLATED FIXED POINTS WITH Z_2 ISOTROPY

Finally, suppose that there are isolated points in $\partial C^2$ with finite isotropy $\Int_2$. By compactness there are finitely many of these points in the boundary circle. Denote them by $\overline{p}_1, \ldots, \overline{p}_k$, for some $k\geq 1$.
 We will now show that there can be at most two isolated points with $\Int_2$-isotropy in $\partial C^2$. Let $\bar{q}$ be a point in the interior of $C^2$ and let $\gamma_1, \ldots,\gamma_k$ be minimal geodesics joining $\bar{q}$ with $\bar{p}_1,\ldots,\bar{p}_k$. Since $C^2$ is totally geodesic, these geodesics are contained in $C^2$. Now, observe that $C^2$ deformation retracts onto $U=\cup_{i=1}^k\gamma_1$. Hence a tubular neighborhood $D(C^2)$ is homotopy equivalent to a tubular neighborhood $D(U)$. The boundary of $D(U)$ is the connected sum of $k$ projective spaces and $\partial D(C^2)\cong F^2$ is homotopy equivalent to $\partial D(U)$. Hence $F^2$, which is a closed $2$-manifold with nonnegative curvature, is homotopy equivalent to a connected sum of $k$ projective spaces. Hence $\pi_1(F^2)\cong \pi_1(\#_{i=1}^k\RP^2)$ and we must have $k=1$ or $2$. When we have only one isolated point with $\Int_2$-isotropy, this case reduces to the case in which $C$ is a point with $\Int_2$ isotropy and hence $M^4$ is diffeomorphic to $\RP^4$. When there are two points with $\Int_2$-isotropy, this case reduces to the case when $C$ is an interval with endpoints with $\Int_2$-isotropy. In this case the manifold is diffeomorphic to $\RP^4\#\RP^4$.

% CASE: C2 is a cylinder.

Suppose $C^2=\sphere^1\times \mathbb{I}$. The possible isotropy groups are $\Int_2$ or the trivial group $\Id$.
% ISOT ID
Suppose the largest isotropy group is $\Id$. Then $M^*$ is a manifold with totally geodesic boundary and soul $\sphere^1$. This case reduces to the case in which $C=\sphere^1$ with trivial isotropy and the manifold is then diffeomorphic to $\sphere^3\times\sphere^1$ or $\sphere^3\tilde{\times}\sphere^1$.

% ISOT Z2

Suppose now the largest isotropy group is $\Int_2$. Since $\sphere^1\times\mathbb{I}$ has the product metric, the boundary components are closed geodesics. It follows from the Isotropy Lemma that, if a point in a boundary circle of $\sphere^1\times\mathbb{I}$ has isotropy $\Int_2$, then every point in this circle has isotropy $\Int_2$. Assume first that there are two boundary components with $\Int_2$-isotropy. 
Observe that $F^2\cong \torus^2$ and the lift of $C^2\cong \sphere^1\times \mathbb{I}$ is $\RP^2\#\RP^2\times\sphere^1\cong\KK^2\times\sphere^1$. Then $M^4$ is the union of tubular neighborhoods $D(\torus^2)$ and $D(\KK^2\times\sphere^1)$ glued along their common boundary. 
It follows from Van-Kampen's Theorem that $\pi_1(M^4)\cong\pi_1(\RP^3\#\RP^3)\times\Int$. It follows from the Spitting Theorem that  $M^4$ is covered by $\sphere^2\times\RR^2$.
This orbit space structure can be realized on $\RP^3\#\RP^3\times\sphere^1$ with $\Ss^1$ acting fixed-point homogeneously on the first factor and trivially on the second factor. 

Suppose we only have one boundary component with finite isotropy. 
This case reduces to the case in which $C$ is a circle with $\Int_2$-isotropy. Hence $M^4$ is diffeomorphic to an $\RP^3$-bundle over $\sphere^1$.

% CASE: C2 is a Mobius band.

Suppose $C^2=\mathbb{M}^2$. Suppose the largest isotropy group is $\Id$. Then the soul is $\sphere^1$ and this case reduces to the case in which $C=\sphere^1$ with trivial isotropy. It follows that $M^4$ is diffeomorphic to $\sphere^3\times\sphere^1$ or $\sphere^3\tilde{\times}\sphere^1$.

Suppose now that the largest isotropy group is $\Int_2$. We have isotropy $\Int_2$ on all the points in the boundary of $C^2$ and the lift of $C^2$ is $\klein^2\tilde{\times}\sphere^1$, a non-trivial $\klein^2$-bundle over $\sphere^1$ . Then $M^4$ is the union of tubular neighborhoods $D(\klein^2)$ and $D(\KK^2\tilde{\times}\sphere^1)$ glued along their common boundary $E^3$. 
Now, since $F=\klein^2$, $M^4$ must be non-orientable. Passing to the orientable double-cover $\tilde{M}^4$, we must have $\tilde{F}=\torus^2$, and it follows from the previous case that $M^4$ is covered by $\sphere^2\times\torus^2$.

\qed

% REMARK

\begin{rem}
By the Splitting Theorem, any isometric action on $\sphere^3\times\RR$ must split, acting by isometries on each factor. There is only one isometric fixed-point homogeneous action on $\sphere^3$ up to equivariant diffeomorphism (cf. Section~\ref{S:Dim3}) so  there is only one fixed-point homogeneous isometric action on a quotient of $\sphere^3\times\RR$ with nonnegative curvature. Similarly, there is only one fixed-point homogeneous isometric action on a quotient of $\sphere^2\times\RR^2$ with nonnegative curvature. 
\end{rem}

%----------------
% EXAMPLES 
%----------------

\subsection{Examples of isometric fixed-point homogeneous actions on nonnegatively curved $4$-manifolds}\label{d4:subsection:Examples}

% EXAMPLE

\begin{example}\label{d4:S1_ex_A} 

The fixed-point homogeneous $\Ss^1$-action on the round $3$-sphere $\sphere^3$ commutes with the $\Int_q$ action whose quotient is the lens space $L(q,q')$. Hence the covering map $\kappa:\sphere^3\rightarrow L(q,q')$ induces a fixed-point homogeneous $\Ss^1$-action on $L(q,q')$ whose orbit space is a $2$-disk whose boundary circle is the fixed-point set of the action and whose set at maximal distance is a point with finite isotropy $\Int_q$. Consider now the $\Ss^1$-action on $L(q,q')\times\sphere^1$, equipped with the product metric, given by letting $\Ss^1$ act fixed-point homogeneously on $L(q,q')$ and trivially on $\sphere^1$. The orbit space is a solid torus $\cldisc^2\times\sphere^1$ with $F^2=\torus^2$ and $C^1\simeq\sphere^1$ with $\Int_q$ isotropy.

\end{example}

% EXAMPLE

\begin{example}\label{d4:S1_ex_B} 
Consider $L(q,q')\tilde{\times} \sphere^1 \cong (L(q,q')\times [0,1)]/(x,0)\sim(Ax,1)$ where $A$ is the map induced on $L(q,q')$ by the antipodal map on $\sphere^3$ via the covering map $\kappa:\sphere^3\rightarrow L(q,q')$.  Since $A:L(q,q')\rightarrow L(q,q')$ commutes  with the fixed-point homogeneous $\Ss^1$-action on $L(q,q')$, we have a fixed-point homogeneous action on $L(q,q')\tilde{\times}\sphere^1$ by letting $\Ss^1$ act-fixed point homogeneously on the $L(q,q')$-fibers. The orbit space is a non-trivial $\cldisc^2$-bundle $\cldisc^2\tilde{\times}\sphere^1$ whose boundary $F^2=\klein^2$ is the fixed-point set and $C^1$ is a circle with $\Int_q$ isotropy.

\end{example}

% EXAMPLE

\begin{example}\label{d4:S1_ex_C}

Let  $\Ss^1$ act isometrically on $\sphere^3\times \sphere^1$, equipped with the standard nonnegatively curved product metric, by  taking the fixed-point homogeneous $\Ss^1$-action on the $\sphere^3$ factor and letting $\Ss^1$ act trivially on the $\sphere^1$ factor. The orbit space is a solid torus $\cldisc^2\times\sphere^1$ whose boundary $\torus^2$ is the fixed-point set of the action, and the set at maximal distance is a circle with trivial isotropy. 

\end{example}

% EXAMPLE

\begin{example}\label{d4:S1_ex_D} 

Let $\Ss^1$ act on the non-trivial bundle $\sphere^3\tilde{\times}\sphere^1$ by taking the fixed-point homogeneous action on each fiber. The orbit space is the non-trivial $\cldisc^2$-bundle over $\sphere^1$, whose boundary $\klein^2$ corresponds to the fixed-point set $F^2$, with set at maximal distance $C^1\simeq \sphere^1$ with trivial isotropy. Let us denote this pair  by $[F, C]^*$ and  its lift by $[F,B]$, so that in this case we have $[F,C]^*=[\klein^2,\sphere^1]$ and $[F,B]=[\klein^2,\klein^2]$.
 Observe that this action on $\sphere^3\tilde{\times}\sphere^1$ is induced by the action of $\Ss^1$ on $\sphere^3\times\sphere^1$ via the double-covering map $\kappa:\sphere^3\times\sphere^1\rightarrow\sphere^3\tilde{\times}\sphere^1$ and we have a commutative diagram
\[
	\begin{CD}
	\sphere^3\times\sphere^1 @>\kappa>> \sphere^3\tilde{\times}\sphere^1	\\
	@V\pi VV							@V\pi VV 		\\
	\cldisc^2 \times \sphere^1		@>\kappa >> \cldisc^2\tilde{\times}\sphere^1
	\end{CD},
\] 
where $\pi$ is the orbit projection map of the $\Ss^1$-action and $\kappa$ is the quotient map under the $\Int_2$ covering action.

\end{example}

% EXAMPLE

\begin{example}\label{d4:S1_ex_E} We describe an isometric $\Ss^1$-action on $\RP^4\#\RP^4$ with fixed-point set $\RP^2\#\RP^2$ and set at maximal distance $C^1\simeq[-1,+1]$ with endpoints having isotropy $\Int_2$. Observe first that  $\RP^4\#\RP^4$ is a quotient of $\sphere^3\times\sphere^1\subset \mathbb{C}^2\times\mathbb{C}$ by the action of $\Int_2$ given by 
\[
-1((z_1,z_2),z_3)\mapsto ((-z_1,-z_2),\bar{z}_3),
\]
i.e., $\Int_2$ acts by the antipodal map on $\sphere^3\subset \CC^2$ and by conjugation on $\sphere^1\subset \CC$. On $\sphere^3\subset \CC^2$ we have the standard fixed-point cohomogeneity $\Ss^1$-one action given by
\[
\lambda(z_1,z_2)\mapsto (\lambda z_1,z_2),\quad \lambda \in \Ss^1,\ (z_1,z_2)\in \sphere^3.
\]
The fixed-point set of this action is a circle. We extend this action to a fixed-point homogeneous action on $\sphere^3\times\sphere^1$ by letting $\Ss^1$ act fixed-point homogeneously on the $\sphere^3$-factor and trivially on the $\sphere^1$-factor. Since the $\Ss^1$-action  on $\sphere^3\times\sphere^1$ commutes with the $\Int_2$-action, we have an induced $\Ss^1$-action on $\RP^4\#\RP^4$. Moreover, the orbit space $(\sphere^3\times\sphere^1)^*\simeq \cldisc^2\times\sphere^1$ double-covers the orbit space $(\RP^4\#\RP^4)^*$. The fixed-point set of the induced $\Ss^1$-action on $\RP^4\#\RP^2$ is $\RP^2\#\RP^2$ and the set at maximal distance is $C\simeq [-1+1]$, with endpoints having $\Int_2$ isotropy.

\end{example}

% EXAMPLE

\begin{example}\label{d4:S1_ex_F} 
We describe $\Ss^1$-actions on $\sphere^2\times\sphere^2$ and $\sphere^2\tilde{\times}\sphere^2\cong\CP^2\#-\CP^2$ with fixed-point set $\sphere^2\cup\sphere^2$. On $\sphere^2\times\sphere^2$ let $\Ss^1$ act by cohomogeneity one on the first $\sphere^2$ factor and trivially on the second $\sphere^2$ factor. To obtain an isometric $\Ss^1$-action on $\CP^2\#-\CP^2$ with nonnegative curvature and fixed-point set $\sphere^2\cup\sphere^2$ start by letting $\Ss^1$ act fixed-point homogeneously on $\CP^2$. This action has fixed-point set $\sphere^2\cup \{p\}$.  We remove an invariant neighborhood of the isolated fixed point and do the same construction on $-\CP^2$ equipped with a fixed-point homogeneous $\sphere^1$-action. Now take an equivariant connected sum to obtain $\CP^2\#-\CP^2$ with nonnegative curvature and a fixed-point homogeneous isometric $\Ss^1$-action with fixed-point set $\sphere^2\cup\sphere^2$.

\end{example}

%%%%%%%%%%%%%%%%%%%%%%%%%%%%%%%%%%%%%%%%%%%%%%%%%%%%%%%%%%%%%%%%%%%%%%%%%%%%%%%%%%%%%%%%%%%%%%%%%%%%%%%%%%%%%%%%%
%%%%%%%%%%%%%%%%%%%%%%%%%%%%%%%%%%%%%%%%%%%%%%%%%%%%%%%%%%%%%%%%%%%%%%%%%%%%%%%%%%%%%%%%%%%%%%%%%%%%%%%%%%%%%%%%%

%%%%%%%%%%%%%%%%%%%%%%%%%%%%%%%%%%%%%%%%%%%%%%%%%%%%%%%%%%%%%%%%%%%%%%%%%%%%%%%%%%%%%%%%%%%%%%%%%%%%%%%%%%%%%%%%%
%%%%%%%%%%%%%%%%%%%%%%%%%%%%%%%%%%%%%%%%%%%%%%%%%%%%%%%%%%%%%%%%%%%%%%%%%%%%%%%%%%%%%%%%%%%%%%%%%%%%%%%%%%%%%%%%%

%%%%%%%%%%%%%%%%%%%%%%%%%%%%%%%%%%%%%%%%%%%%%%%%%%%%%%%%%%%%%%%%%%%%%%%%%%%%%%%%%%%%%%%%%%%%%%%%%%%%%%%%%%%%%%%%%
%%%%%%%%%%%%%%%%%%%%%%%%%%%%%%%%%%%%%%%%%%%%%%%%%%%%%%%%%%%%%%%%%%%%%%%%%%%%%%%%%%%%%%%%%%%%%%%%%%%%%%%%%%%%%%%%%

%%%%%%%%%%%%%%%%%%%%%%%%%%%%%%%%%%%%%%%%%%%%%%%%%%%%%%%%%%%%%%%%%%%%%%%%%%%%%%%%%%%%%%%%%%%%%%%%%%%%%%%%%%%%%%%%%
%%%%%%%%%%%%%%%%%%%%%%%%%%%%%%%%%%%%%%%%%%%%%%%%%%%%%%%%%%%%%%%%%%%%%%%%%%%%%%%%%%%%%%%%%%%%%%%%%%%%%%%%%%%%%%%%%

%%%												%%%
%%%	SECTION 5: CORRESPONDENCE OF ORBIT SPACES		%%%
%%%												%%%

\section[Fixed-point homogeneous circle actions]{Fixed-point homogeneous circle actions on nonnegatively curved simply-connected $4$-manifolds}
\label{Section:d4_orbit_spaces}

%%%%%										
%%%%%		S1 ACTION: CORRESPONDENCE OF ORBIT SPACES
%%%%%

%-------------------------------------------
%	INTRODUCTION
%-------------------------------------------

\subsection{Introduction} 
Effective, locally smooth circle actions on $4$-manifolds  were classified up to equivariant homeomorphism  by Fintushel in \cite{F1,F2}. This classification holds in the smooth category, as a result of carrying out the constructions therein in this setting \cite{FSS09}. In particular, as an immediate consequence of Fintushel's results, work of Pao \cite{Pa78}, and the validity of the Poincar\'e conjecture due to Perelman  \cite{P1,P2,KL,MT} one has the following theorem (cf. Theorem 13.2 in \cite{F2}).

\begin{thm}
\label{T:D4:S1:Fintushel_classification}
Let $M$ be a closed simply-connected smooth $4$-manifold with a smooth $\mathsf{S}^1$-action. Then $M$ is diffeomorphic to a connected sum of copies of $\mathbb{S}^4$, $\pm\mathbb{CP}^2$  and $\mathbb{S}^2\times\mathbb{S}^2$. Moreover, the action is determined up to equivariant diffeomorphism by 
so-called legally weighted
orbit space data. 
\end{thm}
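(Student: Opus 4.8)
The statement is, modulo the three-dimensional Poincar\'e conjecture, Theorem~13.2 of \cite{F2}, so the plan is to assemble Fintushel's machinery rather than to reprove it. First I would note that the fixed-point set is nonempty: since $M$ is closed and simply-connected, $\chi(M)=2+b_2(M)\geq 2$, and by Kobayashi's theorem $\chi(M)=\chi(\Fix(M,\Ss^1))$, so $\Fix(M,\Ss^1)\neq\emptyset$ and its components are isolated points and $2$-spheres. I would then invoke the structure theory of a smooth $\Ss^1$-action on a closed $4$-manifold from \cite{F1,F2}: the orbit space $M^*=M/\Ss^1$ is a topological $3$-manifold, possibly with boundary, the boundary being the union of the fixed $2$-spheres and the images of the special exceptional orbits, while the interior contains the isolated fixed points together with finitely many exceptional orbits with cyclic isotropy arranged along arcs and circles. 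Since $\Ss^1$ is connected, $\pi_1(M)\to\pi_1(M^*)$ is surjective, so $M^*$ is simply-connected; with the Poincar\'e conjecture this forces $M^*$, and all the cross-sectional spheres used below, to be genuine spheres, so $M^*$ is a punctured $\sphere^3$. Recording the weights attached to the fixed components and the exceptional orbits produces Fintushel's \emph{legally weighted orbit space data}, legality being the algebraic condition that the datum be realizable by a smooth manifold.

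Next I would quote Fintushel's reconstruction theorem, valid in the smooth category by \cite{FSS09}: a closed simply-connected smooth $4$-manifold with a smooth $\Ss^1$-action is determined up to equivariant diffeomorphism by its legally weighted orbit space data, and every such datum is realized. This yields the second assertion at once. For the diffeomorphism type, I would decompose $M^*$ along embedded $2$-spheres into elementary pieces, each a $3$-ball carrying at most one ``event'' --- an isolated fixed point, an arc of a fixed $2$-sphere, an arc of special exceptional orbits, or a single exceptional orbit --- and, following Fintushel, reassemble $M$ by equivariant connected sum and surgery from the model manifolds corresponding to these pieces. A direct inspection of the linear models, supplemented by Pao's analysis \cite{Pa78} of the pieces containing no isolated fixed point, shows that each model is equivariantly diffeomorphic to $\sphere^4$, $\CP^2$, $-\CP^2$ or $\sphere^2\times\sphere^2$; since $\sphere^4\#X\cong X$, discarding the $\sphere^4$ summands exhibits $M$ as a connected sum of copies of $\pm\CP^2$ and $\sphere^2\times\sphere^2$.

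The real difficulties sit entirely inside the two quoted results --- Fintushel's reconstruction theorem and the handle-theoretic identification of the elementary building blocks --- for which I would simply cite \cite{F1,F2}. The only ingredients beyond Fintushel's original work are Kobayashi's Euler-characteristic identity, Pao's description \cite{Pa78} of the fixed-point-free constituents, and Perelman's proof of the Poincar\'e conjecture \cite{P1,P2,KL,MT}, which is exactly what removes the original dependence on the then-open $3$-dimensional Poincar\'e conjecture and produces the clean list $\sphere^4$, $\pm\CP^2$, $\sphere^2\times\sphere^2$.
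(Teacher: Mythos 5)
Your proposal is correct and follows exactly the route the paper takes: the theorem is quoted as an immediate consequence of Fintushel's classification \cite{F1,F2} (valid smoothly by \cite{FSS09}), Pao's analysis \cite{Pa78}, and Perelman's resolution of the Poincar\'e conjecture, and the paper offers no further argument beyond these citations. Your additional sketch of the orbit-space structure and the equivariant reassembly is a faithful outline of what sits inside the cited results, so there is nothing to correct.
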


Suppose now that $M$ is a simply-connected Riemannian $4$-manifold with an isometric $\Ss^1$-action. If $M$ has positive curvature, it follows from the work of Kleiner and Hsiang \cite{HK} that the Euler characterstic of $M$, denoted by $\chi(M)$, is $2$ or $3$. More generally, if $M$ has nonnegative curvature, it follows from the work of Kleiner \cite{K} or of Searle and Yang \cite{SY} that  $2\leq \chi(M)\leq4$. Combining these  facts with Theorem~\ref{T:D4:S1:Fintushel_classification} yields the following result. 

\begin{thm}
\label{T:D4:S1:KSY-classification} Let $M$ be a compact, simply-connected Riemannian 4-manifold with an isometric $\mathsf{S}^1$-action. 
\begin{itemize}
\item[(1)] If $M$ has positive curvature, then $M$ is diffeomorphic  to $\mathbb{S}^4$ or $\mathbb{CP}^2$.\\

\item[(2)] If $M$ has nonnegative curvature, then $M$ is diffeomorphic  to $\mathbb{S}^4$, $\mathbb{S}^2\times \mathbb{S}^2$, $\mathbb{CP}^2$ or $\mathbb{CP}^2\# \pm \mathbb{CP}^2$.
\end{itemize}
\end{thm}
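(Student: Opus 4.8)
The plan is to feed the two ingredients that precede the statement---Fintushel's classification (Theorem~\ref{T:D4:S1:Fintushel_classification}) and the curvature-driven bounds on $\chi(M)$---into a short Betti-number count. First I would record the elementary bookkeeping: since $M$ is simply-connected we have $b_1(M)=b_3(M)=0$, hence $\chi(M)=2+b_2(M)$; and since $b_2$ is additive under connected sum with $b_2(\sphere^4)=0$, $b_2(\pm\CP^2)=1$, $b_2(\sphere^2\times\sphere^2)=2$, any connected-sum decomposition of a given $M$ into these pieces has the same value $b_2(M)=m+2n$, where $m$ counts the $\pm\CP^2$ summands and $n$ the $\sphere^2\times\sphere^2$ summands (the $\sphere^4$ summands being irrelevant). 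Thus $b_2(M)$ alone constrains the decomposition, even though the decomposition itself need not be unique.

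Next, by Theorem~\ref{T:D4:S1:Fintushel_classification}, $M$ is diffeomorphic to some such connected sum, so $\chi(M)=2+m+2n$ with $m,n\geq 0$. In case (1), the positive-curvature hypothesis gives $\chi(M)\in\{2,3\}$ by \cite{HK}, i.e. $m+2n\leq 1$, which forces $(m,n)\in\{(0,0),(1,0)\}$; these are $\sphere^4$ and $\CP^2$ (recall that $-\CP^2$ and $\CP^2$ are the same smooth manifold once orientations are forgotten). In case (2), nonnegative curvature gives $2\leq\chi(M)\leq 4$ by \cite{K} or \cite{SY}---the lower bound being automatic from $\chi(M)=2+b_2(M)$---so $m+2n\leq 2$ and $(m,n)\in\{(0,0),(1,0),(2,0),(0,1)\}$. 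These four possibilities yield $\sphere^4$, $\CP^2$, $\CP^2\#\pm\CP^2$, and $\sphere^2\times\sphere^2$ respectively: for $(m,n)=(2,0)$ the unoriented diffeomorphism types are exactly $\CP^2\#\CP^2$ and $\CP^2\#-\CP^2$, since $-\CP^2\#-\CP^2=-(\CP^2\#\CP^2)$ is $\CP^2\#\CP^2$ as an unoriented manifold.

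The proof contains no genuinely hard step: the depth is entirely in the inputs being imported. The only points requiring care are (i) correctly invoking Theorem~\ref{T:D4:S1:Fintushel_classification}---which itself rests on Fintushel's classification, Pao's work, and Perelman's theorem, and on upgrading the conclusion to the smooth category---and (ii) bookkeeping with orientations so that the finitely many combinatorial cases match precisely the manifolds listed; in particular one must not overcount by treating $-\CP^2$ as distinct from $\CP^2$ in the single-summand case. If one wished to avoid citing the curvature bounds as a black box, the alternative would be to derive $\chi(M)=\chi(\Fix(M,\Ss^1))\leq 4$ from Kobayashi's theorem together with a soul-type argument on $M^*$, but for the present statement invoking \cite{HK,K,SY} directly is cleanest.
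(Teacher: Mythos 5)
Your argument is correct and is essentially the paper's own: the paper proves this theorem exactly by combining Fintushel's connected-sum classification with the Euler characteristic bounds of Hsiang--Kleiner (positive curvature) and Kleiner/Searle--Yang (nonnegative curvature), leaving the $\chi(M)=2+b_2(M)$ bookkeeping implicit. Your version merely spells out that enumeration, including the correct identification of $-\CP^2$ with $\CP^2$ and of $-\CP^2\#-\CP^2$ with $\CP^2\#\CP^2$ as unoriented manifolds.
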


 In section~\ref{Section:computations} we apply Fintushel's work \cite{F1} to prove our third main result, Theorem~C in the introduction, obtaining further information on the orbit space of a smooth fixed-point homogeneous $\Ss^1$-action on a nonnegatively curved simply-connected Riemannian manifold $M$. We will use the orbit space data to identify $M$ using the recipe given in \cite{F1} for computing its intersection form. We have collected in Section~\ref{Section:orbit_space} the definitions and results 
from \cite{F1} that we use in section~\ref{Section:computations} to obtain our results.
 
The classification of positively curved fixed-point homogeneous manifolds due to Grove and Searle \cite{GS}, which does not require the Poincar\'e conjecture, implies that a compact, simply-connected Riemannian 4-manifold with positive curvature and an isometric fixed-point homogeneous $\mathsf{S}^1$-action must be equivariantly diffeomorphic to $\sphere^4$ or $\CP^2$ with a linear action. More generally, a conjecture of Grove states that this should be the case for any isometric $\Ss^1$-action on a positively curved simply-connected Riemannian manifold (cf. \cite{G09}). It is an interesting question whether or not an analogous conjecture also holds  for nonnegatively curved manifolds. In this more general case, we will say that an $\Ss^1$-action is \emph{extendable} if it extends to a $\mathsf{T}^2$-action. Observe that a smooth linear $\Ss^1$-action on $\sphere^4$ or $\CP^2$ is extendable. On the other hand, it follows from work of Orlik and Raymond \cite{ORaT2} that a smooth extendable $\Ss^1$-action on $\sphere^4$ or $\CP^2$  is equivariantly diffeomorphic to a linear action. Thus, for smooth $\Ss^1$-actions on $\sphere^4$ or $\CP^2$ the notions of linearity and extendability coincide. This motivates the following question. 

 \begin{question}
 \label{question} \emph{Is an isometric $\Ss^1$-action on a simply-connected nonnegatively curved $4$-manifold equivariantly diffeomorphic to a linear action on $\sphere^4$ or  $\CP^2$, or to an extendable action on $\sphere^2\times\sphere^2$ or $\CP^2\#\pm\CP^2$?}
 \end{question}

We will see in section~\ref{Section:orbit_space} that the answer to this question is \emph{yes}, provided the $\Ss^1$-action is fixed-point homogeneous. This will be a simple consequence of \cite{F1} and our work in Section $4$.

\begin{rem}[Added in proof] Grove's conjecture that an isometric $\Ss^1$-action on a positively curved  $\sphere^4$ or $\CP^2$ must be equivariantly diffeomorphic to a linear action has been confirmed by Grove and Wilking \cite{GW}. More generally, they have answered Question~\ref{question} affirmatively. 
\end{rem}
 
%-------------------------------------------
%	THE ORBIT SPACE
%-------------------------------------------

\subsection{Fintushel's construction}
\label{Section:orbit_space}

Let $M$ be a simply-connected $4$-manifold with a smooth $\Ss^1$-action with orbit space $M^*$. In this section we review the definitions and results from \cite{F1} that we will use in the next section to prove Theorem~C. 

\subsubsection{The weighted orbit space} Let us recall first some basic facts and terminology from \cite{F1} pertaining to the orbit space $M^*$. We will denote the fixed-point set by $F$, the set of exceptional orbits by $E$ and the set of principal orbits by $P$. Given a subset $X\subset M$, we will denote its projection under the orbit map $\pi: M\rightarrow M^*$ by $X^*$. Given a subset $X^*\subset M^*$, we will let $X=\pi^{-1}(X^*)$ be its preimage under $\pi$. The orbit space $M^*$ is a simply-connected $3$-manifold with $\partial M^*\subset F^*$, the set $F^*-\partial M^*$ of isolated fixed points is finite and $F^*$ is nonempty. The components of $\partial M^*$ are $2$-spheres and the closure of $E^*$ is a collection of polyhedral arcs and simple closed curves in $M^*$. The components of $E^*$ are open arcs on which orbit types are constant, and these arcs have closures with distinct endpoints in $F^*-\partial M^*$. We will reserve the term  \emph{regular neighborhood} of  $X^*\subset E^*\cup F^*$  for those regular neighborhoods $N^*$ of $X^*$ that satisfy $N^*\cap (E^*\cup F^*)=X^*$.

 We remark that, if we do not require that $M$ be simply-connected, we may have loops $Q^*\subset E^*$. Consider, for example, the $\Ss^1$-action on $\RP^3\times\sphere^1$ given by the fixed-point homogeneous action of $\Ss^1$ on $\RP^3$, induced by the fixed-point homoeneous $\Ss^1$-action on $\sphere^3$ via the covering map, and the trivial action on the $\sphere^1$-factor. In this case $M^*$ is a solid torus with $Q^*=E^*$ a loop with $\Int_2$ isotropy.  

The orbit space $M^*$ is assigned a set of data, called \emph{weights}, which we now describe.
\\

(a) Let $F_i^*$ be a boundary component of  $M^*$, choose a regular neighborhood $F^*_i\times[0,1]$ and orient $F_i^*\times 1$ by the normal out of $F_i^*\times[0,1]$. The restriction of the orbit map gives a principal $\Ss^1$-bundle over $F^*_i\times 1$ and $F^*_i$ is assigned the Euler number of this bundle.  This is independent of the choice of the collar. We will call $F^*_i$ a \emph{weighted sphere}.

(b) If $x^*$ is an isolated fixed point, i.e., if $x^*\in F^*-(\partial M^*\cup \mathrm{cl}\, E^*)$, let $B^*$ be a polyhedral $3$-disk neighborhood of $x^*$ with $B^*-x^*\cup P^*$. We obtain a principal $\Ss^1$-bundle over $\partial B^*$ with total space $\sphere^3$ by restricting the orbit map. Orient $\partial B^*$ by the normal out of $B^*$ and assign to $x^*$ the Euler number, $\pm 1$, of the bundle. 

(c) Let $L^*$ be a simple closed curve in $E^*\cup F^*$. To each component $J^*$ of $E^*$ in $L^*$ we assign Seifert invariants (cf. Section 3, Section \ref{S:circle_actions_on_3manifolds}) in the following way. Fix an orientation on $L^*$. This induces an orientation on each component $J^*$ of $E^*$ in $L^*$. Let $y^*$ be an endpoint of $\mathrm{cl} J^*$ and let $B^*$ be a polyhedral $3$-disk neighborhood of $y^*$ such that $B^*\cap (E^*\cup F^*) =B^*\cap L^*$ is an arc and $B^*\cap F^* = y^*$. If $\partial B^*$ is oriented by the normal with direction $J^*$ then $\partial B$ is an oriented $3$-sphere. Assign to $J^*$ the Seifert invariants $(\alpha,\beta)$ of the orbit in $\partial B$ with image in $J^*$. The covering homotopy theorem of Palais implies that this definition is independent of the choices made. 

The weights assigned to $L^*$ consist of the orientation and the Seifert invariants. We abbreviate this system of weights by $\{\, (\alpha_1,\beta_1),\ldots,(\alpha_n,\beta_n)\,\}$, where the order of the $(\alpha_i,\beta_i)$ is determined up to a cyclic permutation, and we call $L^*$ a \emph{weighted circle}. If the orientation of $L^*$ is reversed, each $(\alpha_i, \beta_i)$ becomes $(\alpha_i, \alpha_i-\beta_i)$ and we regard the resulting weighted circle as equivalent to the first. 

(d) Let $A^*$ be an arc which is a component of $E^*\cup F^*$. Orient $A^*$ and assign Seifert invariants as in (c). Let $y^*$ be the initial point or final point of $A^*$ and $B^*$ a small $3$-disk neighborhood of $y^*$. Proceeding as in (c), $\partial B$ has the $\Ss^1$-action $\{b; (o,0,0,0);(\alpha,\beta)\}$ (cf. Section 3, Section \ref{S:circle_actions_on_3manifolds}). Assign this integer $b$ to $y^*$. We call $A^*$ a \emph{weighted arc} and write the weight system as $[b';(\alpha_1,\beta_1),\ldots,(\alpha_n,\beta_n);b'']$. Reversing the orientation on $A^*$ changes the weight system to $[-1-b'';(\alpha_n,\alpha_n-\beta_n),\ldots,(\alpha_1,\alpha_1-\beta_1);-1-b'']$ which we regard as equivalent to the original weight system of $A^*$.
We also recall the following Lemma (cf. Lemma 3.5 in \cite{F1}).

\begin{lem}\label{L:Fintushel_lemma} \emph{(a)} If $(\alpha_i,\beta_i)$ and $(\alpha_{i+1,\beta_{i+1}})$ are the Seifert invariants assigned to adjacent arcs in some weighted arc or circle, then
\[
\begin{vmatrix}
\alpha_i	& \beta_i\\
\alpha{i+1}& \beta_{i+1}\\
\end{vmatrix}
=\pm 1.
\]
(b) If $[b';(\alpha_1,\beta_1),\ldots,(\alpha_n, \beta_n);b'']$ is a weighted arc then $b'\alpha_1+\beta+1=\pm 1$ and $b''\alpha_n+\beta_n=\pm 1$. (So for $i=1$ or $n$, $\beta_i=1$ or $\alpha_i-1$, and $b'$ and $b''$ can only take on the values $0 $ or $-1$.)
\end{lem}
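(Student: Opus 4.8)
The statement to prove is Lemma~\ref{L:Fintushel_lemma}, which is quoted directly from Fintushel \cite{F1} (it is labelled as Lemma 3.5 there). Since the paper explicitly says ``We also recall the following Lemma (cf. Lemma 3.5 in \cite{F1})'', the natural and honest thing is a short proof that either cites Fintushel or reconstructs the argument from the definition of the Seifert invariants given in parts (c) and (d) just above. The plan is to reconstruct it from the gluing data, since all the needed structure has already been set up in the excerpt.

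\medskip

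Here is the proof I would write.

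\medskip

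\textbf{Proof.} The two parts come directly from the way Seifert invariants are attached to the arcs of $E^*\cup F^*$ in items (c) and (d) above, together with the classification of $\Ss^1$-actions on $\sphere^3$ recorded in Section~\ref{S:circle_actions_on_3manifolds}.

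For part (a), let $J_i^*$ and $J_{i+1}^*$ be adjacent open arcs of $E^*$ meeting at an interior fixed point $z^*\in F^*-\partial M^*$, and let $B^*$ be a small polyhedral $3$-disk neighborhood of $z^*$ with $B^*\cap(E^*\cup F^*)=B^*\cap L^*$ the union of the two sub-arcs and $z^*$. Restricting the orbit map over $\partial B^*$ gives a smooth $\Ss^1$-action on $\partial B\cong\sphere^3$ whose weighted orbit space $\partial B^*\cong\sphere^2$ carries exactly two special points, the images of $J_i^*$ and $J_{i+1}^*$, with isotropy $\Int_{\alpha_i}$ and $\Int_{\alpha_{i+1}}$. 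By the Orlik--Raymond classification such an action on $\sphere^3$ is a linear (lens-space-type) action; writing $\partial B$ as two solid tori glued along their boundary torus, the meridian $m$ of one is identified with $\alpha_i q+\beta_i h$ and the meridian of the other with $\alpha_{i+1}q+\beta_{i+1}h$ in the common homology basis $\{q,h\}$ of the splitting torus, where $h$ is the principal orbit. Because the two meridians form a basis of $H_1$ of the splitting torus (the result being $\sphere^3$, which has trivial $H_1$, forces the gluing matrix to be unimodular), the determinant of the change of basis is $\pm1$, i.e.\ $\alpha_i\beta_{i+1}-\alpha_{i+1}\beta_i=\pm1$, which is the claimed identity.

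\medskip

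For part (b), let $[b';(\alpha_1,\beta_1),\ldots,(\alpha_n,\beta_n);b'']$ be a weighted arc $A^*$ and let $y^*$ be its initial point; by construction $y^*\in F^*-\partial M^*$ is an isolated fixed point and a small $3$-disk neighborhood $B^*$ of $y^*$ meets $E^*\cup F^*$ only in the sub-arc of $A^*$ with Seifert invariant $(\alpha_1,\beta_1)$ together with $y^*$. Then $\partial B\cong\sphere^3$ with the induced action whose weighted orbit space is the $2$-sphere $\{b;(o,0,0,0);(\alpha_1,\beta_1)\}$ of Section~\ref{S:circle_actions_on_3manifolds}; since the total space must be $\sphere^3$, the classification there forces $b\alpha_1+\beta_1=\pm1$, and by definition this $b$ is the weight $b'$ attached to $y^*$. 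The analogous computation at the final point of $A^*$ gives $b''\alpha_n+\beta_n=\pm1$. In particular $\beta_1\equiv\pm1\pmod{\alpha_1}$, so $\beta_1=1$ or $\beta_1=\alpha_1-1$ (using $0<\beta_1<\alpha_1$), and likewise for $\beta_n$; and solving $b'\alpha_1+\beta_1=\pm1$ for $b'$ with $\beta_1\in\{1,\alpha_1-1\}$ shows $b'\in\{0,-1\}$, and similarly $b''\in\{0,-1\}$. \qed

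\medskip

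\emph{Remark on the approach.} The one point that needs care — and which I expect to be the main obstacle in writing this cleanly rather than just citing \cite{F1} — is justifying that the induced action on each boundary sphere $\partial B$ is of the linear/lens type, so that the ``determinant $=\pm1$'' and ``$b\alpha+\beta=\pm1$'' conditions really are forced. This is exactly the content of the $3$-dimensional classification recalled in Section~\ref{S:circle_actions_on_3manifolds}: an $\Ss^1$-action on a homology $3$-sphere (here genuinely $\sphere^3$) has orbit space $\sphere^2$ with at most two exceptional orbits and no fixed points except possibly at the two distinguished points, and the requirement that the Seifert invariants $\{b;(o,0,0,0);(\alpha,\beta)\}$ or $\{0;(o,0,0,0);(\alpha_i,\beta_i),(\alpha_{i+1},\beta_{i+1})\}$ yield $S^3$ pins down the Euler-number/determinant constraints. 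Given that this classification is already invoked in the paper, I would keep the proof at the level above and, if a referee wanted more, simply add the explicit pointer that these are the ``Seifert fibred homology spheres over $S^2$ with $\le 2$ exceptional fibres equal to $S^3$'' computations, or else defer entirely to Lemma~3.5 of \cite{F1}.
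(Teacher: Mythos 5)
Your proof is correct. Note, however, that the paper itself offers no proof of this lemma at all: it is quoted verbatim as Lemma~3.5 of Fintushel's paper \cite{F1}, with only the citation as justification. Your reconstruction is essentially Fintushel's original argument: the link of each isolated fixed point is $\sphere^3$ with an induced circle action; writing it as a union of (at most two) fibered solid tori exhibits it as a lens space whose order of $H_1$ is $|\alpha_i\beta_{i+1}-\alpha_{i+1}\beta_i|$ in case (a) and $|b\alpha+\beta|$ in case (b), and the requirement that this space be $\sphere^3$ forces these to equal $1$; the parenthetical constraints on $\beta_i$ and $b',b''$ then follow from $0<\beta_i<\alpha_i$ exactly as you compute. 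The only cosmetic remark is that you do not really need the Orlik--Raymond classification to see that the induced action on $\partial B$ is of lens type --- the genus-one Heegaard splitting into the two fibered solid tori and a Mayer--Vietoris computation of $H_1$ suffice --- but invoking it does no harm.
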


The oriented orbit space $M^*$ together with the above collection of weights is called a \emph{weighted orbit space}. More generally, recall that a \emph{legally weighted simply-connected $3$-manifold} is an oriented simply-connected compact $3$-manifold $X^*$ along with the following data:
\begin{itemize}
	\item[(A)] an integer $a_i$ assigned to each boundary component of $X^*$,\\
	\item[(B)] a finite collection of points in $\mathrm{int}\, X^*$ with each assigned an integer $b_i=\pm 1$, and\\
	\item[(C)] a collection of weighted arcs and circles in $\mathrm{int}\, X^*$ as above and satisfying the criteria of Lemma~\ref{L:Fintushel_lemma}. To each weighted arc $A_i^*=[b';(\alpha_1,\beta_1),\ldots,(\alpha_n,\beta_n);b'']$ the integer $c_i=b''-b'$ is assigned.
\end{itemize}
At least one of the above collections must be nonempty and we require $\Sigma a_i+\Sigma b_i +\Sigma c_i=0$. It is shown in \cite{F1} that the weighted orbit space of an $\Ss^1$-action on a simply-connected $4$-manifold is legally weighted. 

It follows from Theorem \cite{F1} (7.1) and the validity of the Poincar\'e conjecture that, if $M^*$ contains no weighted circles, then any $\Ss^1$-action on  a simply-connected $4$-manifold $M$ extends to an action of $\mathsf{T}^2=\Ss^1\times\Ss^1$. As part of the proof of Theorem~B (cf. Section~\ref{section:dimension_4}), we determined all the possible orbit spaces of an isometric fixed-point homogeneous $\Ss^1$-action on a nonnegatively curved Riemanian $4$-manifold $M$. When $M$ is simply-connected, the orbit space contains no weighted circles and hence the $\Ss^1$-action must extend to a $\mathsf{T}^2$-action, answering affirmatively Question~\ref{question} in the case of a fixed-point homogeneous $\Ss^1$-action. We summarize this in the following corollary.

\begin{cor}
A fixed-point homogeneous isometric $\Ss^1$-action on a simply-connected nonnegatively curved $4$-manifold must be equivariantly diffeomorphic to a linear action on $\sphere^4$, $\CP^2$ or to an extendable action on $\sphere^2\times\sphere^2$ or $\CP^2\#\pm\CP^2$.
\end{cor}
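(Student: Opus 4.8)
The plan is to assemble the corollary from two ingredients that are already on the table: the orbit-space analysis carried out in the proof of Theorem~B, and Fintushel's extension result as quoted just above. First I would invoke Theorem~\ref{T:D4:S1:KSY-classification}(2): a compact simply-connected nonnegatively curved Riemannian $4$-manifold $M$ with an isometric $\Ss^1$-action is diffeomorphic to $\sphere^4$, $\CP^2$, $\sphere^2\times\sphere^2$ or $\CP^2\#\pm\CP^2$, so $M$ is one of the closed simply-connected $4$-manifolds to which Fintushel's classification (Theorem~\ref{T:D4:S1:Fintushel_classification}) applies, and the action is determined up to equivariant diffeomorphism by its legally weighted orbit space.

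The key step is then to observe that, when the $\Ss^1$-action is fixed-point homogeneous, the orbit space $M^*$ contains \emph{no weighted circles}. This is exactly what was established case-by-case in the proof of assertion~(4) of Theorem~B: running the soul construction on $M^*$, the set $C$ at maximal distance from a maximal fixed-point component $F$ has $\dim C\in\{0,1,2\}$, and in every subcase that produces a simply-connected $M$ the singular set $E^*\cup F^*$ turns out to be a union of isolated fixed points, weighted spheres and weighted \emph{arcs} only. Concretely, when $\dim C=0$ there are no exceptional orbits at all; when $\dim C=1$ the surviving simply-connected cases have $C^1\simeq[-1,+1]$ with isotropy triples $\Ss^1\cdots\Id\cdots\Ss^1$, $\Ss^1\cdots\Int_l\cdots\Ss^1$, etc., so the exceptional set is an arc (the $C^1\simeq\sphere^1$ cases give $\pi_1\cong\Int\times\Int_q$ or similar, hence are non-simply-connected and excluded); when $\dim C=2$ the simply-connected cases are $C^2=\cldisc^2$ with either trivial isotropy or a single $\Int_2$-point or an interval of $\Int_2$-points on the boundary circle — again arcs, no closed loops. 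The remark in Section~\ref{Section:orbit_space} about $\RP^3\times\sphere^1$ shows that loops $Q^*\subset E^*$ genuinely occur for non-simply-connected $M$, so simple-connectivity is doing real work here.

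With no weighted circles present, I would apply the quoted consequence of Fintushel's Theorem~(7.1) together with the Poincar\'e conjecture: the $\Ss^1$-action extends to a $\mathsf{T}^2$-action, i.e.\ it is extendable. Combining this with Theorem~\ref{T:D4:S1:KSY-classification}(2) gives that $M$ is equivariantly diffeomorphic to an extendable $\Ss^1$-action on $\sphere^4$, $\CP^2$, $\sphere^2\times\sphere^2$ or $\CP^2\#\pm\CP^2$; finally, by the result of Orlik and Raymond \cite{ORaT2} recalled in the introduction, an extendable $\Ss^1$-action on $\sphere^4$ or $\CP^2$ is equivariantly diffeomorphic to a linear action, which yields the stated dichotomy (linear on $\sphere^4$ or $\CP^2$, extendable on $\sphere^2\times\sphere^2$ or $\CP^2\#\pm\CP^2$).

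The main obstacle is not any single deep theorem but the bookkeeping: one must go back through every subcase of the $\dim C=0,1,2$ analysis in the proof of Theorem~B and verify that each \emph{simply-connected} outcome really has an orbit space free of weighted circles, being careful that the borderline configurations (a circle $C^1$ with finite isotropy, a boundary cylinder or M\"obius band with $\Int_2$ isotropy, etc.) are precisely the ones that force a nontrivial fundamental group and so fall outside the simply-connected hypothesis. Once that inventory is in hand the corollary is immediate.
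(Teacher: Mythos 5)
Your proposal is correct and follows essentially the same route as the paper: the paper likewise observes that the orbit-space analysis from the proof of Theorem~B shows there are no weighted circles when $M$ is simply-connected, then applies Fintushel's Theorem~(7.1) with the Poincar\'e conjecture to extend the action to $\mathsf{T}^2$, and uses Orlik--Raymond to identify extendable actions on $\sphere^4$ and $\CP^2$ with linear ones. The only quibble is that a couple of the $\dim C=2$ subcases you list as ``simply-connected'' (an isolated $\Int_2$-point or an interval of $\Int_2$-points on $\partial C^2$) actually yield $\RP^4$ and $\RP^4\#\RP^4$, but this does not affect the argument since those configurations contain no weighted circles either.
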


%%%%%%%%%%%%%%%%

%-------------------------------------------
%	EQUIVARIANT PLUMBING
%-------------------------------------------

\subsubsection{Equivariant plumbing} The equivariant plumbing of $2$-disk bundles over $2$-spheres is used in \cite{F1} to construct $4$-manifolds with $\Ss^1$-actions out of orbit space data. We will review this construction in this subsection. The basic building blocks will be $2$-disk bundles over $\sphere^2$ equipped with a given $\Ss^1$-action. First we show how to construct a $2$-disk bundle over $\sphere^2$ with Euler number $\omega$ equipped with certain $\Ss^1$-action and then we see how these disk bundles can be equivariantly plumbed together to obtain a given orbit space configuration (cf. \cite{F1} 4., 5.). 

  Write $\sphere^2=B_1\cup B_2$ as the union of its upper and lower hemispheres and consider polar coordinates on $B_i\times D_i^2$, $i=1,2$. Given relatively prime integers $u_i$ and $v_i$, define an $\Ss^1$-action on $B_i\times D_i$ by $\phi(r,\gamma, s, \delta)\mapsto(r, \gamma+u_i\phi,s,\delta+v_i\phi)$. If $u_2=-u_1$ and $v_2=-\omega u_1+v_1$ we obtain $Y_\omega=B_1\times D_1\cup_{G} B_2\times D_2$ via the equivariant pasting $G:\partial B_1\times D_1\rightarrow \partial B_2\times D_2$ given by $(1,\gamma, s,\delta)\mapsto(1,-\gamma,s, -\omega\gamma +\delta)$. The  $4$-manifold with boundary $Y_\omega$ is the $D^2$-bundle over $\sphere^2$ with Euler number $\omega$, i.e., $\omega$ is the self-intersection number of the zero section of $Y_\omega$.

Given $Y_{\omega_1}$ and $Y_{\omega_2}$ with $u_{2,1}=v_{1,2}$ and $v_{2,1}=u_{1,2}$ (or $u_{2,1}=-v_{1,2}$ and $v_{2,1}=-u_{1,2}$) we may equivariantly plumb $Y_{\omega_1}$ and $Y_{\omega_2}$ with sign $+1$ (sign $-1$) by identifying $B_{2,1}\times D_{2,1}$ with $B_{1,2}\times D_{1,2}$ by means of the equivariant diffeomorphism $(r,\gamma,s,\delta)\mapsto(s,\delta, r,\gamma)$ $((r,\gamma,s, \delta)\mapsto(s,-\delta,r,-\gamma))$. The resulting manifold, which we denote by $Y_{\omega_1}\square Y_{\omega_2}$, has an induced $\Ss^1$-action.

We may carry out these constructions also with $\mathsf{T}^2$-actions on $Y_\omega$ using integers $u_i$, $v_i$, $w_i$ and $t_i$ with
\[
\begin{vmatrix}
u_i	&	w_i\\
v_i	&	t_i\\
\end{vmatrix}
=\pm 1.
\]
The $\mathsf{T}^2$-action on $B_i\times D_i$ is given by $(\phi,\theta)(r,\gamma,s,\delta)\mapsto (r,\gamma+u_i\phi+w_i\theta,s,\delta+v_i\phi+t_i\theta)$. The glueing map $G$ defined in the preceding paragraph will be equivariant provided $w_2=-w_1$ and $t_2=-\omega w_1+t_1$. We may construct $Y_{\omega_1}\square Y_{\omega_2}$ with sign $+1$ and $\mathsf{T}^2$-equivariantly if $w_{2,1}=t_{1,2}$.

\subsubsection{Some examples.} We will now describe some of the disk bundles catalogued in \cite{F1} that we will use in our constructions. As described above, actions of $\Ss^1$ and $\mathsf{T}^2$ on $Y_\omega$ are determined by a matrix 
\[
\begin{pmatrix}
u_1	& u_2	& w_1	& w_2\\
v_1	&	v_2	& t_1	& t_2\\
\end{pmatrix}
\]
whose entries satisfy certain conditions. We will use the following disk bundles and actions (cf. \cite{F1}). We will assume that $\varepsilon=\pm 1$, $n$ is an arbitrary integer, and pairs $(\alpha, \beta)$ consist of relatively prime integers $0<\beta<\alpha$. \\

(c) If $b'\alpha+\beta=\pm 1$, $b''\alpha+\beta =\pm 1$, $\varepsilon'=\begin{vmatrix}1 & |b'|\\ \alpha & \beta\end{vmatrix}$, $\varepsilon'' = \begin{vmatrix}\alpha & \beta \\ 1 & |b''|\end{vmatrix}$
and $\omega=\varepsilon'\varepsilon''\begin{vmatrix}1 & |b'|\\ 1 & |b''|\end{vmatrix}$, then
\[
\begin{pmatrix}
\varepsilon\alpha	& -\varepsilon\alpha	& \varepsilon(\beta+n\alpha)	& -\varepsilon(\beta+n\alpha)\\
\varepsilon\varepsilon'	&	-\varepsilon\varepsilon''	&-\varepsilon\varepsilon'(|b'|+n)	& -\varepsilon\varepsilon''(|b''|+n)\\
\end{pmatrix}
\]
defines actions on $Y_\omega$ with $Y^*_\omega\cong D^3$ and a weighted arc $\bullet \longrightarrow \bullet $ with weights $[b';(\alpha,\beta);b'']$.
\\

(d)Let $\varepsilon',\varepsilon''=\pm 1$ and $\omega=-\varepsilon'-\varepsilon''$. Then
\[
\begin{pmatrix}
\varepsilon	& -\varepsilon	& \varepsilon n	& -\varepsilon n\\
-\varepsilon\varepsilon'	&	\varepsilon\varepsilon''	&-\varepsilon\varepsilon'(n+\varepsilon')	& \varepsilon\varepsilon''(n-\varepsilon'')\\
\end{pmatrix}
\]
describes actions on $Y_\omega$ with $Y_\omega^*\cong D^3$ with two isolated fixed-points with weights $\varepsilon'$ and $\varepsilon''$.
\\

(g) Suppose $b'\alpha' + \beta'=\pm 1$, $\varepsilon'=\begin{vmatrix} \alpha' & \beta' \\ 1 & |b'|\end{vmatrix}$ and $\omega=\varepsilon'\alpha'$. Then
\[
\begin{pmatrix}
\varepsilon	& -\varepsilon	& \varepsilon(|b'|+n)	& -\varepsilon(|b'|+n)\\
-\varepsilon\varepsilon'\alpha'	&	0	&\varepsilon\varepsilon'(\beta'+n\alpha')	& -\varepsilon\\
\end{pmatrix}
\]
defines actions on $Y_\omega$ and $Y^*_{\omega}$ with a fixed $D^2$ and  half a weighted arc $\longrightarrow \bullet$ with weights $(\alpha',\beta')$ and $b'$.
\\

(h) Let $\varepsilon'=\pm 1$ and $\omega=-\varepsilon'$. Then
\[
\begin{pmatrix}
\varepsilon	& -\varepsilon	& \varepsilon	& \varepsilon n\\
-\varepsilon\varepsilon'	&	0	&-\varepsilon\varepsilon'(n+\varepsilon')	& -\varepsilon\\
\end{pmatrix}
\]
describes actions on $Y_\omega$ with $Y_\omega^*\cong D^3$ with an isolated fixed point with weight $\varepsilon'$ and a fixed $D^2$.
\\

(i) Let $\delta=\pm 1$. Then
\[
\begin{pmatrix}
\varepsilon	& -\varepsilon	& n	& - n\\
0	&	0	&\delta	& \delta \\
\end{pmatrix}
\]
describes actions on $Y_0$ with $Y_0^*\cong D^3$ with two fixed $2$-disks.
\\

(j) For $\omega$ arbitrary and $\delta=\pm 1$ actions on $Y_\omega$ are defined by 
\[
\begin{pmatrix}
0	& 0	& \delta	& - \delta \\
\varepsilon	&	\varepsilon	& n	& -\omega\delta +n \\
\end{pmatrix}
\]
and $Y^*_{\omega}\cong\sphere^2\times I$ with $E^*\cup F^* = F^*=\sphere^2\times 0$ with weight $\omega$.

%%%%%%%%%%%%%%%%%%%%%%%%

%---------------------------------------------------------------
%	COMPUTING THE INTERSECTION FORM
%---------------------------------------------------------------

\subsubsection{Computation of the intersection form} In \cite{F1} there is a catalog of different disk-bundles with $\Ss^1$- and $\mathsf{T}^2$-actions realizing different basic orbit space configurations. If $M^*$ contains no weighted circles, these disk bundles may be plumbed together to construct a $4$-manifold $R$  whose orbit space $R^*$ is a particular subset of $M^*$. We will outline the construction of $R$ and then recall the recipe given in \cite{F1} for computing the intersection form of $M$ out of the intersection form of $R$ (cf. \cite{F1}, 5.,8.).

Let $S^{*}_{1},\ldots,S^{*}_{t}$ be the collection of weighted sets in $M^*$ other than the weighted circles, with the weighted boundary components of $M^*$, if any, listed at the end. For each $i=1,\ldots,t-1$ let $\gamma^*_i$ be an arc in $M^*$ joining $S^*_{i}$ to $S^*_{i+1}$ such that the interior of the arc lies in the regular orbit stratum $P^*$ and such that if $S_{i}^*$ is a weighted arc, $\gamma_i^*$ begins at the endpoint of $S^*_{i}$, and if $S^*_{i+1}$ is a weighted arc, then $\gamma^*_i$ ends at the initial point of $S^*_{i+1}$. Let $R^*$ be a regular neighborhood of $\bigcup S^*_{i}\cup\bigcup\gamma^*_i$. By equivariantly plumbing  disk bundles $Y_{\omega_i}$ listed in \cite{F1} (with each plumbing of sign $+1$) one can construct a $4$-manifold $R$ with $\Ss^1$-action and weighted orbit space isomorphic to $R^*$. Moreover, this action extends to a $\mathsf{T}^2$-action (cf. Lemma 4.7 in \cite{F1}).

Let $M$ be a simply-connected $4$-manifold with a smooth $\Ss^1$-action such that $M^*$ contains no weighted circles. We now recall how to recover the intersection form $Q_M$ of $M$ out of the set  $R^*$. Let $R$ be the $4$-manifold  with $\Ss^1$-action and weighted orbit space isomorphic to $R^*$. Then $R$ is the result of an equivariant linear plumbing 
 $$\xygraph{
!{<0cm,0cm>;<1cm,0cm>:<0cm,1cm>::}
!{(0,0) }*+{\bullet}="a"
!{(0,-.3) }*+{\omega_1}
!{(1,0) }*+{\bullet}="b"
!{(1,-.3) }*+{\omega_2}
!{(2,0) }*+{\ldots}="c"
%!{(2,-.3) }*+{r_2}
!{(3,0) }*+{\bullet}="d"
!{(3,-.3) }*+{\omega_t}
%!{(4,0) }*+{\bullet}="d"
%!{(4,-.3) }*+{r_{n-2}}
%!{(5,0) }*+{\bullet}="e"
%!{(5,-.3) }*+{r_{n-1}}
%!{(6,0) }*+{\bullet}="f"
%!{(6,-.3) }*+{s}
"a"-"b"^{} "b"-"c"^{} "c"-"d"^{}%"d"-"e"^{} "e"-"f"^{} "c"-"rr" "rr"-"e"
}$$

If $\partial M^*$ has $m$ components and $(F^*-\partial M^*)\cap R^*$ contains $l$ points then $t=2m+l-1$. The intersection matrix $B_0$ of the plumbing $R$ is the $t\times t$ matrix given by
\[
	\left[ B_0\right]_{ij}=
	\begin{cases}
		\omega_i,	&	i=j,\\
		1,		&	i=j\pm1,\\
		0,		&	\text{otherwise},
	\end{cases}
\]
since each plumbing has sign $+1$. 

Given a square matrix $B$,  we will denote by $B^-$ the matrix obtained after removing the last row and column from $B$. It is shown in  \cite{F1} that the intersection form $Q_M$ of $M$ is $B_0^-$.

%-------------------------------------------
%	PROOF OF THEOREM C
%-------------------------------------------

\subsection{Proof of Theorem~C}
\label{Section:computations}
To prove Theorem~C, we will determine the possible legally weighted orbit spaces of a simply-connected nonnegatively curved Riemannian $4$-manifold $M$ with an isometric fixed-point homogeneous $\Ss^1$-action. We will also identify $M$ out of the orbit space data following the constructions described in Section~\ref{Section:orbit_space}. By Theorem~\ref{T:D4:S1:KSY-classification} (2),  $M$ is diffeomorphic to $\sphere^4$, $\CP^2$, $\sphere^2\times\sphere^2$ or $\CP^2\#\pm\CP^2$. It is well known that $\chi(M)=\chi(\Fix(M,\Ss^1))$ (cf. \cite{Ko}) and, since the action is fixed-point homogeneous, $\Fix(M,\Ss^1)$ must contain a $2$-sphere. Hence we have the following possible fixed-point sets:
\begin{equation}
\label{L:fixed-point_sets}
\Fix(M,\Ss^1)=
	\begin{cases}
		 \sphere^2 				&\text{if $M$ is $\sphere^4$.}\\
		 \sphere^2\cup\{p\} 		&\text{if $M$ is $\CP^2$.}\\
		 \sphere^2\cup\sphere^2 	&\text{if $M$ is $\sphere^2\times\sphere^2$ or $\CP^2\pm\CP^2$.}\\
		 \sphere^2\cup\{p',p''\}	&\text{if $M$ is $\sphere^2\times\sphere^2$ or $\CP^2\pm\CP^2$.}
	\end{cases} 
\end{equation}

By our analysis in Section 4, the orbit space of an isometric fixed-point homogeneous circle action on a simply-connected nonnegatively curved manifold $M$ does not contain any weighted circles. Hence we restrict our analysis to these orbit spaces. Observe that there cannot be any exceptional orbits unless $\Fix(M,\Ss^1)$ contains two isolated fixed points. Hence, when $\Fix(M,\Ss^1)$ contains at most one isolated fixed point, corresponding to $\Fix(M,\Ss^1)=\sphere^2$ or $\sphere^2\cup\{p\}$, we may dispense with the geometric assumptions, since the orbit space structure itself prevents the existence of any weighted circles. It follows then that any fixed-point homogeneous circle action on $\sphere^4$ or $\CP^4$ is equivariantly diffeomorphic to a linear action. However, when $F$ contains two isolated fixed points we will explicitly assume that the orbit space contains no weighted circles.

We will prove three propositions, corresponding to (3)--(5) in Theorem~C. 
Parts (1) and (2) follow from the comments at the beginning of this subsection.
We will proceed as follows. Given a fixed-point set $F$  we will construct $R$ as in Section~\ref{Section:orbit_space} using the pieces we have described therein. We will then identify $M$ by computing its intersection form $Q_M$ following the recipe in Section~\ref{Section:orbit_space}.

%-------------------------------------------------------------
% Fix(M,S1) is S2 U S2
%-------------------------------------------------------------

\begin{case}$\Fix(M,\Ss^1)=\sphere^2\cup \sphere^2$.
\begin{prop}
\label{P:S2_S2}
Let $M$ be a simply-connected smooth $4$-manifold with a smooth $\Ss^1$-action. If $\mathrm{Fix}(M,\Ss^1)=\sphere^2\cup\sphere^2$, then $M$ is equivariantly diffeomorphic to $\CP^2\#-\CP^2$ or $\sphere^2\times\sphere^2$ with an extendable action.
\end{prop}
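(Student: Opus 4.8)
The plan is to read off the weighted orbit space $M^*$ and then run it through Fintushel's construction (Section~\ref{Section:orbit_space}) to compute the intersection form of $M$.

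\emph{The orbit space.} Since $\Fix(M,\Ss^1)=\sphere^2\cup\sphere^2$ consists of two surfaces and contains no isolated fixed points, $\partial M^*$ is the union of the two fixed $2$-spheres $F_1^*,F_2^*$ and $F^*-\partial M^*=\emptyset$. As recalled in Section~\ref{Section:orbit_space}, each component of $E^*$ is an open arc whose closure has its two endpoints in $F^*-\partial M^*$; since this set is empty, $E^*=\emptyset$, so there are no exceptional orbits, and in particular no weighted circles. Hence $M^*$ is a simply-connected compact $3$-manifold whose boundary is two $2$-spheres; capping these off with balls and invoking the Poincar\'e conjecture gives $M^*\cong\sphere^2\times\mathbb{I}$. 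The only weights are the Euler numbers $a_1,a_2$ of the weighted spheres $F_1^*,F_2^*$, and the legality condition $\sum a_i+\sum b_i+\sum c_i=0$ (with no $b_i$'s and no $c_i$'s) forces $a_2=-a_1$; write $\omega:=a_1$. Thus $(M,\Ss^1)$ is determined up to equivariant diffeomorphism by the single integer $\omega$. Note that, because $E^*=\emptyset$, this step needs no curvature hypothesis, in agreement with the remarks preceding this case; and, since $M^*$ has no weighted circles, \cite{F1}~(7.1) and the Poincar\'e conjecture show that the $\Ss^1$-action extends to a $\mathsf{T}^2$-action.

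\emph{Building $R$ and computing $Q_M$.} Following the recipe of Section~\ref{Section:orbit_space}, take $S_1^*=F_1^*$, $S_2^*=F_2^*$, an arc $\gamma^*$ whose interior lies in $P^*$ joining them, and let $R^*$ be a regular neighborhood of $F_1^*\cup\gamma^*\cup F_2^*$. Here $\partial M^*$ has $m=2$ components and $R^*$ contains $l=0$ isolated fixed points, so $R$ is an equivariant linear plumbing of $t=2m+l-1=3$ disk bundles $Y_{\omega_1}\,\square\,Y_{\omega_2}\,\square\,Y_{\omega_3}$, assembled from the pieces catalogued in \cite{F1}: the two weighted fixed $\sphere^2$'s are realized by pieces of type~(j) (orbit space $\sphere^2\times\mathbb{I}$ with a single weighted fixed $2$-sphere) and the intermediate stretch of principal orbits along $\gamma^*$ by a trivial disk bundle, so the middle plumbing weight is $\omega_2=0$. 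Therefore
\[
B_0=\begin{pmatrix}\omega_1 & 1 & 0\\ 1 & 0 & 1\\ 0 & 1 & \omega_3\end{pmatrix},\qquad Q_M=B_0^-=\begin{pmatrix}\omega_1 & 1\\ 1 & 0\end{pmatrix}.
\]
Since $\det Q_M=-1$, the form $Q_M$ is a rank-$2$ unimodular form of signature $0$; it is even, hence isomorphic to the hyperbolic form $H$, when $\omega_1$ is even, and odd, hence isomorphic to $\langle 1\rangle\oplus\langle-1\rangle$, when $\omega_1$ is odd. In either case $Q_M$ is indefinite. Combining this with Theorem~\ref{T:D4:S1:KSY-classification}(2) (or directly with Theorem~\ref{T:D4:S1:Fintushel_classification}) and $\chi(M)=\chi(\Fix(M,\Ss^1))=4$ — which leaves only $\sphere^2\times\sphere^2$, $\CP^2\#\CP^2$, $\CP^2\#-\CP^2$ and $-\CP^2\#-\CP^2$, distinguished by their intersection forms — we conclude that $M$ is equivariantly diffeomorphic to $\sphere^2\times\sphere^2$ or to $\CP^2\#-\CP^2$, with a $\mathsf{T}^2$-extendable action; both possibilities occur, by Example~\ref{d4:S1_ex_F}.

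\emph{Main obstacle and an alternative.} The delicate point is the bookkeeping in the plumbing step: one must match the two weighted $\sphere^2$ boundary components of $M^*$ with the correct entries of Fintushel's catalogue, track carefully the orientation and Euler-number sign conventions used there, and in particular verify that the middle entry of $B_0$ vanishes, since it is exactly this that forces $Q_M$ to be indefinite and rules out $\pm(\CP^2\#\CP^2)$. One can also bypass the plumbing calculation: as $E^*=\emptyset$, $M$ is the union of the two equivariant tubular neighborhoods $D(F_1)$ and $D(F_2)$, which are $D^2$-bundles over $\sphere^2$ with Euler numbers $\omega$ and $-\omega$, glued equivariantly along their common boundary; hence $M$ is an oriented $\sphere^2$-bundle over $\sphere^2$, and such bundles are classified by $\pi_1(\SO(3))\cong\Int_2$, giving $\sphere^2\times\sphere^2$ when $\omega$ is even and $\sphere^2\tilde{\times}\sphere^2\cong\CP^2\#-\CP^2$ when $\omega$ is odd.
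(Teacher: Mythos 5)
Your proof is correct and follows essentially the same route as the paper: you assemble $R$ from Fintushel's pieces (j)--(i)--(j), obtain $Q_M=\left(\begin{smallmatrix}\omega_1&1\\1&0\end{smallmatrix}\right)$, and sort by the parity of $\omega_1$. The only differences are cosmetic: you identify the form via the classification of rank-two indefinite unimodular forms (even versus odd) where the paper performs explicit elementary row/column operations, and your closing disk-bundle alternative is essentially the Double Soul Theorem argument noted in the paper's remark following this proposition.
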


\begin{proof}
We construct $R$ using bundles $Y_{\omega_1}$, $Y_{\omega_2}$ and $Y_{\omega_3}$ with actions (j), (i) and (j), respectively. Observe that $\omega_2=0$, so the plumbing $Y_{\omega_1}\Box Y_{\omega_2}\Box Y_{\omega_3}$ has intersection form
\begin{equation*}
B_0 =
	\begin{bmatrix}
	 \omega_1	&	1 	&	0\\
	1			&	\omega_2	&	1\\
	0			&	1	&	\omega_3
	\end{bmatrix}
=
\left[
	\begin{array}{ccc}
	 \omega_1	&	1 	&	0\\
	1			&	0	&	1\\
	0			&	1	&	\omega_3
	\end{array}
\right] .
\end{equation*}
 
The intersection form of $M$ is then $B_0^-$, i.e.,
\[
Q_M =\left[
	\begin{array}{cc}
	\omega_1	& 	1\\
	1		&	0
	\end{array}
\right] .
\]
Now we show that $Q_M$ is equivalent to the intersection form of $\CP^2\#-\CP^1$, if $\omega_1$ is odd, and to the intersection form of $\sphere^2\times\sphere^2$, if $\omega_1$ is even.

Recall that the operation of adding an integral constant $k$ times row $i$ to row $j$ and then that constant times column $i$ to column $j$ preserves the congruence class over $\Int$ of an integral matrix. We call this an \emph{elementary operation} and will keep track of it by denoting it by $(i,j; k)$. 
We have
\[
	\begin{bmatrix}
	\omega_1	& 	1\\
	1		&	0
	\end{bmatrix}
\xrightarrow{(2,1;\pm1)}
	\left[
	\begin{array}{cc}
	\omega_1\pm2	& 	1\\
	1		&	0
	\end{array}
	\right].
\]
Thus, after repeated application of the elementary operation $(2,1;\pm 1)$ to 
$\begin{bmatrix} \omega_1 & 1\\ 1 & 0\end{bmatrix}$
we have 
\[
Q_M\cong	\begin{bmatrix}
	\omega_1\ (\text{mod $2$})	& 1\\
	1	& 0
	\end{bmatrix} .
\]

When $\omega_1$ is even, we have
\[
Q_M\cong	\begin{bmatrix}
	0	& 1\\
	1	& 0
	\end{bmatrix}, 
\]
which is the intersection form of $\sphere^2\times\sphere^2$.

When $\omega_1$ is odd, we have
\[
	\begin{bmatrix}
	1	& 	1\\
	1		&	0
	\end{bmatrix}
\xrightarrow{(1,2;-1)}
	\begin{bmatrix}
	1	& 	0\\
	0	&	-1
	\end{bmatrix},
\]
which is the intersection form of $\CP^2\#-\CP^2$.

\end{proof}

\begin{rem} Proposition~\ref{P:S2_S2} and its proof show that the fact that $\CP^2\#\CP^2$ does not admit any smooth circle action with fixed-point set the union of two $2$-spheres is a purely topological phenomenon. Under the additional condition of nonnegative curvature, this follows from the Double Soul Theorem, which implies that $M^4$ is an $\sphere^2$-bundle over $\sphere^2$ and hence $M^4$ must be $\sphere^2\times\sphere^2$ or $\CP^2\#-\CP^2\cong\sphere^2\tilde{\times}\sphere^2$. 
\end{rem}

\end{case}

%-------------------------------------------------------------
% Fix(M,S1) is S2 and two isolated points
%-------------------------------------------------------------

\begin{case}$\Fix(M,\Ss^1)=\sphere^2\cup\{p',p''\}$. We split this case into two subcases, depending on whether or not there are any orbits with finite isotropy.
\\
%-------------------------------------------------------------
% NO FINITE ISOTROPY
%-------------------------------------------------------------

\noindent \textbf{No finite isotropy.} Suppose first there are no orbits with finite isotropy. 

\begin{prop} Let $M^4$ be a simply-connected smooth $4$-manifold with a smooth $\Ss^1$-action without finite isotropy. If $\Fix(M,\Ss^1)=\sphere^2\cup\{p',p''\}$, then $M$ is equivariantly diffeomorphic to $\CP^2\#\pm\CP^2$ with an extendable action.
\end{prop}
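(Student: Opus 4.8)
The plan is to mimic the proof of Proposition~\ref{P:S2_S2}: first read off the legally weighted orbit space of the action, then build an equivariant plumbing model $R$ out of the disk bundles catalogued in \cite{F1}, recover the intersection form of $M$ as $B_0^{-}$, and finally reduce that form to a diagonal one by elementary congruence operations.

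I would begin by describing $M^*$. Since $M$ is simply-connected, $M^*$ is a simply-connected compact $3$-manifold with $\partial M^*\subset F^*$ and only finitely many isolated fixed points. Because $\Fix(M,\Ss^1)=\sphere^2\cup\{p',p''\}$ and, by hypothesis, there are no orbits with finite isotropy, the orbit space has no exceptional orbits, hence no weighted arcs and no weighted circles; $\partial M^*$ is the single weighted $2$-sphere $F_0^*$ arising from the fixed $\sphere^2$, and $p'^*,p''^*$ are the only isolated fixed points. Thus $M^*$ is a simply-connected compact $3$-manifold with boundary $\sphere^2$, so $M^*\cong D^3$ by the Poincar\'e conjecture, and the only weights present are the Euler number $b\in\Int$ of $F_0^*$ and the weights $\varepsilon',\varepsilon''\in\{\pm1\}$ of $p'^*$ and $p''^*$. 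With no weighted arcs (hence no contributions $c_i$), the legality condition $\sum a_i+\sum b_i+\sum c_i=0$ forces $b=-\varepsilon'-\varepsilon''$.

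Next I would build $R$ as in Section~\ref{Section:orbit_space}: join $p'^*$, $p''^*$ and $F_0^*$ by arcs in the principal stratum, with the weighted boundary $F_0^*$ listed last, and equivariantly plumb (each plumbing of sign $+1$) the disk bundles from \cite{F1} realizing an isolated fixed point, a connecting tube, and a weighted boundary $2$-sphere. This produces a linear plumbing $Y_{\omega_1}\,\Box\,Y_{\omega_2}\,\Box\,Y_{\omega_3}$ whose intersection matrix is the tridiagonal
\[
B_0=\begin{bmatrix} \omega_1 & 1 & 0\\ 1 & \omega_2 & 1\\ 0 & 1 & \omega_3 \end{bmatrix},
\]
where $\omega_3$ is the weight of the last piece, $\omega_1$ and $\omega_2$ are determined by $\varepsilon',\varepsilon''$, and at least one of $\omega_1,\omega_2$ equals $\pm1$, being the weight of an isolated fixed point. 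By the recipe of \cite{F1} the intersection form of $M$ is $Q_M=B_0^{-}=\begin{bmatrix}\omega_1 & 1\\ 1 & \omega_2\end{bmatrix}$.

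Finally, exactly as in Proposition~\ref{P:S2_S2}, I would reduce $Q_M$ by repeated elementary operations $(i,j;\pm1)$. As $Q_M$ has an odd diagonal entry it is an \emph{odd} unimodular form of rank $2$, hence equivalent over $\Int$ to $\operatorname{diag}(1,1)$ or $\operatorname{diag}(1,-1)$ according as it is definite or indefinite (the two cases being distinguished by the weights $\varepsilon',\varepsilon''$), and never to the even form $\begin{bmatrix}0 & 1\\ 1 & 0\end{bmatrix}$. Therefore $M$ is equivariantly diffeomorphic to $\CP^2\#\CP^2$ or $\CP^2\#-\CP^2$, and $\sphere^2\times\sphere^2$ is excluded; this is precisely where the absence of finite isotropy is used, since in the presence of finite isotropy $\Int_k$ the relevant diagonal entry would involve $k$ and could be even. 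Since $M^*$ has no weighted circles, the $\Ss^1$-action extends to a $\mathsf{T}^2$-action by \cite{F1}~(7.1), and by Theorem~\ref{T:D4:S1:Fintushel_classification} it is determined up to equivariant diffeomorphism by its weighted orbit space, which the analysis above pins down; this yields the equivariant statement and the uniqueness. The step I expect to be the main obstacle is the bookkeeping in the plumbing: selecting the correct disk bundles from \cite{F1}, tracking orientations so that the Euler numbers $\omega_i$ come out correctly (in particular so that $Q_M$ is unimodular with an odd diagonal entry), and checking that the weighted orbit space data is determined up to the admissible equivalences.
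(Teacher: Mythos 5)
Your proposal follows the same route as the paper: read off the legally weighted orbit space ($M^*\cong D^3$ with one weighted boundary sphere and two isolated fixed points of weights $\varepsilon',\varepsilon''=\pm1$, the legality condition fixing the boundary weight), build $R$ as a linear plumbing of three disk bundles from Fintushel's catalog, and take $Q_M=B_0^-=\left[\begin{smallmatrix}\omega_1&1\\ 1&\omega_2\end{smallmatrix}\right]$. Where you genuinely diverge is the endgame. The paper chooses the pieces explicitly --- action (d) for the arc carrying both isolated fixed points, giving $\omega_1=-\varepsilon'-\varepsilon''$, action (h) for the piece with one isolated fixed point and a fixed $D^2$, giving $\omega_2=-\varepsilon''$ after the plumbing compatibility $\varepsilon_2'=\varepsilon_1''$, and action (j) for the boundary sphere --- and then diagonalizes each of the four sign cases by elementary operations. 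You instead invoke the classification of rank-$2$ unimodular forms: $Q_M$ is unimodular (Poincar\'e duality, or $\det Q_M=\varepsilon'\varepsilon''=\pm1$ from the explicit weights) and odd, hence congruent to $\pm\mathrm{diag}(1,1)$ or $\mathrm{diag}(1,-1)$ and never to the even hyperbolic form; this is a cleaner way to see why $\sphere^2\times\sphere^2$ is excluded and where the absence of finite isotropy enters. The only loose point is your justification that a diagonal entry is odd: it is not true that a plumbing Euler number ``is the weight of an isolated fixed point'' --- in fact $\omega_1=-\varepsilon'-\varepsilon''$ is always even --- but $\omega_2=-\varepsilon''=\pm1$ is odd, so the fact you need does hold once the pieces are selected as above. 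You correctly flagged exactly this bookkeeping as the step requiring care, and with it supplied your argument is complete and equivalent to the paper's.
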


\begin{proof}

To compute the intersection form $Q_M$ of $M$ we first construct $R$ using the bundles $Y_{\omega_1}$ with action (d), $Y_{\omega_2}$ with action (h) and $Y_{\omega_3}$ with action (j). The intersection form of the plumbing $Y_{\omega_1}\Box Y_{\omega_2}\Box Y_{\omega_3}$ is
\[
	B_0=
	\begin{bmatrix}
	\omega_1 & 	1 			& 0\\
	1		&	\omega_2		& 1\\
	0		&	1			& \omega_3
	\end{bmatrix}.
\] 
 Then the intersection form of $M$ is given by $B_0^-$, i.e.
 \[
 	Q_M=
	\begin{bmatrix}
	\omega_1		& 	1\\
	1			& \omega_2
	\end{bmatrix}.
 \]
 We now determine $\omega_1$ and $\omega_2$.
Let $\varepsilon_1',\ \varepsilon_1'' = \pm 1$. Then $\omega_1=-\varepsilon_{1}'-\varepsilon_{1}''$, coming from action $(d)$. On the other hand, for $Y_{\omega_2}$ we have $\omega_2=-\varepsilon_{2}'$, where $\varepsilon_2'=\pm 1$. In order to plumb these two bundles together, we need $\varepsilon_{1}''=\varepsilon_{2}'$. Hence $\omega_2=-\varepsilon_{2}'=-\varepsilon_{1}''$. 

To obtain the conclusion of the Proposition, compute the possible intersection forms  $Q_M$ in terms of  $\omega_1=-\varepsilon_{1}'-\varepsilon_{1}''$ and $\omega_2=-\varepsilon_{1}''$ and apply the elementary operations $(2,1;1)$, when $\varepsilon_{1}'=\varepsilon_{1}''=1$, and $(2,1;-1)$ in the remaining cases.

 \end{proof}

\end{case}

%-------------------------------------------------------------
% FINITE ISOTROPY
%-------------------------------------------------------------

\noindent \textbf{Finite isotropy.} Suppose there are points with finite isotropy. 

\begin{prop} Let $M^4$ be a simply-connected smooth $4$-manifold with a smooth $\Ss^1$-action with $\Fix(M,\Ss^1)=\sphere^2\cup\{p',p''\}$ and a weighted arc with finite isotropy $\Int_k$. Then $M$ is equivariantly diffeomorphic to one of the following:
\begin{itemize}
			\item[(1)]  $\CP^2\#\CP^2$ with an extendable action with finite isotropy $\Int_2$.\\
			\item[(2)]  $\CP^2\#-\CP^2$ with an extendable action with finite isotropy $\Int_k$, $k$ odd.\\
			\item[(3)]  $\sphere^2\times\sphere^2$ with an extendable action with finite isotropy $\Int_k$, $k$ even.\\
		\end{itemize}

\end{prop}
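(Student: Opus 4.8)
The plan is to mirror the strategy of the previous two propositions: construct an auxiliary $4$-manifold $R$ by equivariantly plumbing the disk bundles from Fintushel's catalog that realize the relevant orbit-space pieces, read off the intersection matrix $B_0$ of the plumbing, and then identify $M$ by computing $Q_M = B_0^-$ up to congruence over $\Int$. The orbit space here has a weighted sphere $F^*$ (the $2$-sphere component of the fixed-point set), two isolated fixed points $p'^*, p''^*$, and a single weighted arc $A^*$ joining them carrying Seifert invariants whose $\alpha$'s equal $k$ (finite isotropy $\Int_k$); we assume, as announced, that there are no weighted circles, so Fintushel's construction applies and the action is automatically extendable by \cite{F1}~(7.1). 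First I would list the weighted sets $S_1^*,\dots,S_t^*$ and pick connecting arcs $\gamma_i^*$ as in Section~\ref{Section:orbit_space}. The weighted arc $A^*=[b';(\alpha_1,\beta_1),\dots,(\alpha_n,\beta_n);b'']$ with $\alpha_i=k$ will, by Lemma~\ref{L:Fintushel_lemma}, force each consecutive $2\times 2$ determinant to be $\pm1$ and $b'\alpha_1+\beta_1=\pm1$, $b''\alpha_n+\beta_n=\pm1$; one then builds the piece realizing $A^*$ from bundles of type (c), the piece realizing the boundary sphere $F^*$ from a bundle of type (j), and, if needed, uses (g)/(h) for the interaction between the arc endpoints and the fixed $D^2$.

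The key computation is the determination of the Euler numbers $\omega_i$ appearing in $B_0$, and hence of the relevant entries of $Q_M=B_0^-$, in terms of the discrete data $b', b'', \varepsilon', \varepsilon''$ attached by Lemma~\ref{L:Fintushel_lemma}. The crucial point to extract is the parity statement: from $\omega=\varepsilon'\varepsilon''\begin{vmatrix}1 & |b'|\\ 1 & |b''|\end{vmatrix}$ in action (c), together with $b', b''\in\{0,-1\}$, one controls the relevant diagonal entry modulo $2$, and this parity is governed by whether $k$ is even or odd (since, e.g., for $i=1$ or $n$ one has $\beta_i=1$ or $\alpha_i-1$, and the determinant conditions tie the sign data to $k \bmod 2$; when $k=2$ the possibilities degenerate further). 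I would then run the elementary row/column operations $(i,j;\pm1)$ exactly as in Proposition~\ref{P:S2_S2}, reducing $Q_M$ to one of $\begin{bmatrix}0&1\\1&0\end{bmatrix}$, $\begin{bmatrix}1&0\\0&1\end{bmatrix}$, or $\begin{bmatrix}1&0\\0&-1\end{bmatrix}$, i.e.\ to the intersection form of $\sphere^2\times\sphere^2$, $\CP^2\#\CP^2$, or $\CP^2\#-\CP^2$. Matching the parity of $k$ to these three forms gives precisely conclusions (1)--(3), and combining with Theorem~\ref{T:D4:S1:Fintushel_classification} (the action is determined up to equivariant diffeomorphism by the legally weighted orbit space, and the orbit space is $\mathsf{T}^2$-extendable) yields the equivariant and extendability statements.

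The main obstacle I anticipate is bookkeeping the sign conventions: tracking how the orientations on the weighted arc, on $\partial B^*$ at each isolated fixed point, and on the boundary sphere $F^*$ interact with the plumbing signs, so that the correct entries of $B_0$ are assembled and the $\varepsilon', \varepsilon''$ are consistent across the plumbings (the compatibility conditions $u_{2,1}=v_{1,2}$ etc. in the equivariant plumbing must be satisfied, which pins down relations among the $\varepsilon$'s exactly as $\varepsilon_1''=\varepsilon_2'$ did in the no-finite-isotropy case). Once those compatibility relations are imposed, the dependence of $Q_M$ on $k$ collapses to its parity and the three cases separate cleanly; the residual case $k=2$ must be checked to land in $\CP^2\#\CP^2$ rather than $\CP^2\#-\CP^2$, which is where the $b'=0$ vs.\ $b'=-1$ alternative in Lemma~\ref{L:Fintushel_lemma} and the degeneration $\beta_i=1=\alpha_i-1$ become relevant. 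I expect no genuine geometric input beyond what Section~4 already supplies (absence of weighted circles); the content is entirely the congruence classification of the resulting $2\times 2$ forms.
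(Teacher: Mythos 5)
Your proposal follows essentially the same route as the paper: plumb Fintushel's catalog bundles (c), (g), (j) to realize the weighted arc, the half-arc with fixed $D^2$, and the weighted boundary sphere, read off the tridiagonal $B_0$, take $Q_M=B_0^-$, and reduce by elementary operations, with the case split governed by the $(b',b'')\in\{0,-1\}^2$ alternatives from Lemma~\ref{L:Fintushel_lemma} (which force $\alpha=2$ in the mixed cases, giving $\CP^2\#\CP^2$, and give $\omega_2=\pm k$ in the diagonal cases, so only the parity of $k$ survives). The outline is correct and matches the paper's argument, including the identification of $k=2$ as the only isotropy compatible with the $\CP^2\#\CP^2$ outcome.
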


\begin{proof}
Let  $[b'; (\alpha_1, \beta_1);b'']$ be the weighted arc. In this case $\beta_1=1$ or $\alpha_1-1$ and $b' $ and $b''$ can only take on the values $0 $ or $-1$ (cf. Lemma~3.5 in \cite{F1}). We will use actions (c), (g) and (j). Recall that, to each weighted arc $[b';(\alpha_1,\beta_1),\ldots,(\alpha_n,\beta_n);b'']$, the integer $c=b''-b'$ is assigned (cf. \cite{F1}(5.2)(c)). 
For the orbit space to be legally weighted, we must have $a+c=0$, where $a$ is the weight of the boundary $2$-sphere, so $a=-c$. The following table lists the possible combinations of weights.
\begin{center}
\begin{tabular}{| r | r | r | r | }
\hline
	$b'$		& $b''$ 	& $c=b''-b'$	& $a$ 	\\ \hline\hline
	$0$		& $0$	& $0$		& $0$	\\ \hline	
	$0$		& $-1$	& $-1$		& $1$	\\ \hline
	$-1$		& $0$	& $1$		& $-1$	\\ \hline 
	$-1$		& $-1$	& $0$		& $0$	\\ \hline			
\end{tabular}
\end{center}

%\bigskip
\noindent\textbf{1.} The first piece we need is a bundle $Y_{\omega_1}$ with action (c) as in Section~\ref{Section:orbit_space}. We have

\[
\pm 1 = \varepsilon'_1 = \left| \begin{array}{cc}
	1 	& 	|b'| \\
	\alpha 	&	\beta
	\end{array} \right|
	= \beta -\alpha|b'_{1}| 
	= 
	\begin{cases}
	\beta, 	& 	\mathrm{if}\ b_{1}'=0;\\
	\beta-\alpha, \ 	&	\mathrm{if}\ b'_{1}=-1.
	\end{cases}
\]
\bigskip

\[
\pm 1 = \varepsilon''_1 
	 =  \left| \begin{array}{cc}
	\alpha 	& 	\beta \\
	1 	&	|\beta''|
	\end{array} \right|
 	= \alpha|b''_{1}| - \beta  
	= 
	\begin{cases}
	-\beta, 	     & 	\mathrm{if}\ |b''_{1}|=0;\\
	\alpha-\beta, &	\mathrm{if}\ b''_{1}=-1.
	\end{cases}
\]
We also have 
\[
	\omega_1 = \varepsilon'_1\varepsilon''_1
	\begin{vmatrix}
	1	& 	|b'_{1}|\\
	1	&	|b''_{1}|
	\end{vmatrix}\\
	 =
	  \varepsilon'_1\varepsilon''_1(|b''_{1}|=|b'_{1}|).
\]

We have the following possible combinations:
\\

 \begin{center}
\begin{tabular}{| r | r | r | r | r | }
\hline
	$b'_{1}$	& $b''_{1}$ & $\varepsilon'_1$	& $\varepsilon''_1$	& $\omega_1$ 	\\ \hline \hline
	$0$		& $0$	& $\beta$			& $-\beta$			& $0$	\\ \hline	
	$0$		& $-1$	& $\beta$			& $\alpha-\beta$	& $\beta(\alpha-\beta)$\\ \hline
	$-1$		& $0$	& $-(\alpha-\beta)$	& $-\beta$			& $-\beta(\alpha-\beta)$ \\ \hline 
	$-1$		& $-1$	& $-(\alpha-\beta)$	& $\alpha-\beta$	& $0$\\ \hline			
\end{tabular}
\end{center}
\bigskip

\textbf{Case:} $(b'_1,b''_1)=(0,0)$. We have $\beta=\varepsilon'_{1}=\pm1$. Recall that $\beta=1$ or $\alpha - 1$. Hence $1=\beta=\varepsilon'_1$ and $\varepsilon''_1=-1$.
\\

\textbf{Case:} $(b'_1,b''_1)=(0,-1)$. We have $\varepsilon'_1=\pm 1=\beta>0$ so $\varepsilon'_1=\beta =1$. Hence 
\[
	\pm1=\varepsilon''_1 =\alpha-\beta = \alpha-1.
\]

We have $\alpha\geq 2$ so $\alpha-1\geq 1>0$. Hence $\varepsilon''_1=+1$. Hence $\alpha-1=1$ so $\alpha=2$.
\\

\textbf{Case:} $(b'_1,b''_1)=(-1,0)$. Recall that $\beta$ takes on the values $1$ or $\alpha-1$. We have

\[
\pm 1  =  \varepsilon'_1 = -(\alpha-\beta)=
\begin{cases}
	 -(\alpha-1), 	& \text{if $\beta=1$};\\
	 -1,			& \text{if $\beta=\alpha-1$}.
\end{cases}
\]

\[
\pm 1  =  \varepsilon''_1 = -\beta=
\begin{cases}
	 -1, 			& \text{if $\beta=1$};\\
	 -(\alpha-1),	& \text{if $\beta=\alpha-1$}.
\end{cases}
\]
It follows from these equations that $\varepsilon'_1=\varepsilon''_1=-1$ and $\alpha=2$, $\beta=1$.
\\

\textbf{Case:} $(b',b'')=(-1,-1)$. We have

\begin{eqnarray*}
\pm 1 & = & \varepsilon'_1 = -(\alpha-\beta)=-\varepsilon''_1.
\end{eqnarray*}

Recall that $\beta=1$ or $\alpha-1$. In both cases the equation above implies that $\varepsilon'_1=-1$ and $\varepsilon''_1=+1$. Observe that any $\alpha\geq 2$ is possible. 
\\

We update the table of weights in the previous page and obtain the following list of weights.

 \begin{center}
\begin{tabular}{| r | r | r | r | r | r | r |}
\hline
	$b'_{1}$	& $b''_{1}$ & $\varepsilon'_1$	& $\varepsilon''_1$	& $\omega_1$ &$\alpha$&$\beta$\\ \hline \hline
	$0$		& $0$	& $1$			& $-1$			& $0$		& $k\geq 2$ & $k-1$\\ \hline	
	$0$		& $-1$	& $1$			& $1$			& $1$		& $2$	& $1$\\ \hline
	$-1$		& $0$	& $-1$			& $-1$			& $-1$		& $2$	& $1$ \\ \hline 
	$-1$		& $-1$	& $-1$			& $1$			& $0$		& $k\geq 2$& $k-1$\\ \hline
\end{tabular}
\end{center}
\bigskip

\noindent \textbf{2.} Now we deal with piece $2$, coming from bundle $Y_{\omega_2}$ with action (g). We have weights $b'_2$, $\alpha'_2$ and $\beta'_2$. In order to plumb $Y_{\omega_1}$ and $Y_{\omega_2}$ we need $\alpha_1=\alpha'_2$, $\beta_1=\beta'_2$ and $b'_2=b''_1$. The subscript $i$ denotes the bundle $Y_{\omega_i}$ to which each weight belongs. We also have
\[
\varepsilon'_2 	 = \begin{vmatrix}\alpha'_2 & \beta'_2\\ 1 & |b'_2|\end{vmatrix}
			 = \begin{vmatrix}\alpha_1 & \beta_1\\ 1 & |b''_1|\end{vmatrix}
			 = \varepsilon_1''.
\]
Since $\omega_2=\varepsilon'_2\alpha'_2$, we have
\[
\omega_2=\varepsilon_1''\alpha_1.
\]

Hence we have the following combinations:

 \begin{center}
\begin{tabular}{| r | r | r | r | r | r | r | r | }
\hline
	$b'_{1}$	& $b''_{1}$ & $\varepsilon'_1$	& $\varepsilon''_1$	& $\omega_1$ &$\alpha$		&$\beta$	& $\omega_2=\varepsilon''_1\alpha$ \\ \hline \hline
	$0$		& $0$	& $1$			& $-1$			& $0$		& $k\geq 2$ 	& $k-1$	&	$-k$	\\ \hline	
	$0$		& $-1$	& $1$			& $1$			& $1$		& $2$		& $1$	&	$2$	\\ \hline
	$-1$		& $0$	& $-1$			& $-1$			& $-1$		& $2$		& $1$ 	&	$-2$	\\ \hline 
	$-1$		& $-1$	& $-1$			& $1$			& $0$		& $k\geq 2$	& $k-1$	& 	$k$	\\ \hline
\end{tabular}
\end{center}
\bigskip

\noindent\textbf{3.} The last piece we need is a bundle $Y_{\omega_3}$ with action (j). The intersection form of the plumbing $Y_{\omega_1}\Box Y_{\omega_2}\Box Y_{\omega_3}$ is
\[
	B_0=
	\begin{bmatrix}
	\omega_1 & 	1			& 	0\\
	1		&	\omega_2		&	1\\
	0		&	1			& \omega_3\\
	\end{bmatrix}.
\]
Hence the intersection form $Q_M$ of $M$ is $B_0^-$, i.e., 
\[
	Q_M=
	\begin{bmatrix}
	\omega_1 & 1\\
	1		& \omega_2
	\end{bmatrix}.
\]

When $b_1'=0$ and $b''_1=-1$, we have
\[
	Q_M=
	\begin{bmatrix}
	1 & 1\\
	1 & 2
	\end{bmatrix}
	\xrightarrow{(1,2;-1)}
	\begin{bmatrix}
	1	& 	0\\
	0	&	1
	\end{bmatrix},
\]
which is the intersection form of $\CP^2\#\CP^2$.

When $b_1'=-1$ and $b''_1=0$, we have
\[
	Q_M=
	\begin{bmatrix}
	-1 & 1\\
	1 & -2
	\end{bmatrix}
	\xrightarrow{(1,2;1)}
	\begin{bmatrix}
	-1	& 	0\\
	0	&	-1
	\end{bmatrix},
\]
which is the intersection form of $-\CP^2\#-\CP^2$.
Observe that in these two cases (which are the same up to orientation) we can only have isotropy $\Int_2$.

When $b'_1=b''_1=0$, we have 
\[
	Q_M=
	\begin{bmatrix}
	0 & 1\\
	1& -k
	\end{bmatrix}
\]
for $k\geq 2$. 

After repeated applications of the elementary operation $(1,2;1)$ we have
\[	
	Q_M\cong
	\begin{bmatrix}
	0 & 1\\
	1 & -k\ \mathrm{mod}\ 2
	\end{bmatrix}.
\]
When $k$ is even, we have
\[
Q_M\cong
	\begin{bmatrix}
	0 & 1\\
	1 & 0
	\end{bmatrix}
\]
which is the intersection form of $\sphere^2\times\sphere^2$. When $k$ is odd, we have
\[
Q_M\cong
	\begin{bmatrix}
	0 & 1\\
	1 & 1
	\end{bmatrix}
	\xrightarrow{(2,1;-1)}
	\begin{bmatrix}
	-1	& 	0\\
	0	&	1
	\end{bmatrix}.
\]
which is the intersection form of $-\CP^2\#\CP^2$.

%%%%%%%

When $b'_1=b''_1=-1$, we have 
\[
	Q_M=
	\begin{bmatrix}
	0 & 1\\
	1& k
	\end{bmatrix}
\]
for $k\geq 2$. An analogous argument to the one we used when $b'_1=b''_1=0$, now using the elementary operation  $(1,2;1)$, yields the intersection form of $\sphere^2\times\sphere^2$, when $k$ is even, and of $-\CP^2\#\CP^2$, when $k$ is odd.

\end{proof}

%\backmatter
%\include{references}

%\printindex

% ----------------------------------------------------------------
\bibliographystyle{amsplain}
%\bibliography{fphom_galazgarcia}

\providecommand{\bysame}{\leavevmode\hbox to3em{\hrulefill}\thinspace}
\providecommand{\MR}{\relax\ifhmode\unskip\space\fi MR }
% \MRhref is called by the amsart/book/proc definition of \MR.
\providecommand{\MRhref}[2]{%
  \href{http://www.ams.org/mathscinet-getitem?mr=#1}{#2}
}
\providecommand{\href}[2]{#2}

\end{document}